\documentclass[twoside,11pt]{article}
\usepackage{mlmod}
\ShortHeadings{Risk Management Perspective on Statistical Estimation}{Javeed, Kouri and Surowiec}
\firstpageno{1}

% Optional PDF information
\ifpdf
\hypersetup{
  pdftitle={Risk Management Perspective on Statistical Estimation},
  pdfauthor={D. P. Kouri and T. M. Surowiec}
}
\fi

% Prevent itemized lists from running into the left margin inside theorems and proofs
\RequirePackage{enumitem}
\setlist[enumerate]{leftmargin=.5in}
\setlist[itemize]{leftmargin=.5in}
\RequirePackage{amsmath,amssymb,amsopn,bbm}
\RequirePackage[OT1]{fontenc}
\RequirePackage{appendix,natbib}
\RequirePackage{tikz,tikz-cd,pgfplots,color}
\usetikzlibrary[positioning,arrows,decorations.pathmorphing,backgrounds,fit,calc,matrix,shapes]

%\startlocaldefs
%\numberwithin{equation}{section}
%\theoremstyle{plain}
%\newtheorem{theorem}{Theorem}[section]
%\newtheorem{proposition}{Proposition}[section]
%\theoremstyle{remark}
%\newtheorem{remark}{Remark}[section]
\newcommand{\risk}{\mathcal{R}}

\newcommand{\cB}{\mathcal{B}}

\newcommand{\cL}{\mathcal{L}}
\newcommand{\cM}{\mathcal{M}}

\newcommand{\cR}{\mathcal{R}}

\newcommand{\cU}{\mathcal{U}}
\newcommand{\cV}{\mathcal{V}}

\newcommand{\fA}{\mathfrak{A}}
\newcommand{\fD}{\mathfrak{D}}
\newcommand{\fP}{\mathfrak{P}}
\newcommand{\real}{\mathbb{R}}
\newcommand{\bbe}{\mathbb{E}}
\newcommand{\dset}{\cL}
\newcommand{\mset}{\cM}
\newcommand{\esssup}{\operatorname*{ess\,sup}}
\newcommand{\essinf}{\operatorname*{ess\,inf}}
\definecolor{azure}{rgb}{0.0, 0.5, 1.0}

\newcommand{\eqdef}{\triangleq}
%\endlocaldefs

\begin{document}
\thispagestyle{empty}
\title{A Risk Management Perspective on Statistical Estimation and Generalized Variational Inference
\thanks{DPK's research was sponsored in part by Sandia National Laboratories LDRD
``Risk-Adaptive Experimental Design for High-Consequence Systems''. }}

\author{\name Aurya S.\ Javeed \email asjavee@sandia.gov \\
        \addr Optimization and Uncertainty Quantification \\
              Sandia National Laboratories \\
              P.O.\ Box 5800, MS-1320 \\
              Albuquerque, NM 87125, USA
        \AND
        \name Drew P.\ Kouri \email dpkouri@sandia.gov \\
        \addr Optimization and Uncertainty Quantification \\
              Sandia National Laboratories \\
              P.O.\ Box 5800, MS-1320 \\
              Albuquerque, NM 87125, USA
        \AND
        \name Thomas M.\ Surowiec \email thomasms@simula.no \\
        \addr Department of Numerical Analysis and Scientific Computing \\
                 Simula Research Laboratory\\
              	Kristian Augusts gate 23\\
                 0164, Oslo, Norway}

%\editor{Francis Bach, David Blei and Bernhard Sch\"{o}lkopf}

\maketitle

\begin{abstract}
Generalized variational inference (GVI) provides an optimization-theoretic
framework for statistical estimation that encapsulates many traditional
estimation procedures. The typical GVI problem is to compute a distribution of
parameters that maximizes the expected payoff minus the divergence of the
distribution from a specified prior.  In this way, GVI enables likelihood-free
estimation with the ability to control the influence of the prior by tuning
the so-called learning rate.  Recently, GVI was shown to outperform traditional
Bayesian inference when the model and prior distribution are misspecified.  In
this paper, we introduce and analyze a new GVI formulation based on utility
theory and risk management.  Our formulation is to maximize the expected payoff
while enforcing constraints on the maximizing distribution. We recover the
original GVI distribution by choosing the feasible set to include a constraint
on the divergence of the distribution from the prior.  In doing so, we
automatically determine the learning rate as the Lagrange multiplier for the
constraint. In this setting, we are able to transform the infinite-dimensional
estimation problem into a two-dimensional convex program. This reformulation
further provides an analytic expression for the optimal density of parameters.
In addition, we prove asymptotic consistency results for empirical
approximations of our optimal distributions.  Throughout, we draw connections
between our estimation procedure and risk management.  In fact, we demonstrate
that our estimation procedure is equivalent to evaluating a risk measure.  We
test our procedure on an estimation problem with a misspecified model and prior
distribution, and conclude with some extensions of our approach. 
\end{abstract}

\begin{keywords}
Variational Inference,
Generalized Variational Inference,
M-Estimation,
Maximum Likelihood,
Regression,
Nonparametic Statistics,
Wasserstein Distance,
Statistical Learning,
Bayesian
\end{keywords}

%\begin{AMS}
%62A10,
%62A15,
%62C12,
%62G05,
%62G07
%\end{AMS}

%\section{Proposed Outline}
%\textcolor{red}{
%\begin{enumerate}
%  \item Introduction.
%  \item Background.
%  \begin{enumerate}
%    \item Notation.
%    \item Funcational analytic setting and convex analysis.
%    \item Regret/utility and $\phi$-divergence.
%    \item OCE and coherent risk measures.
%  \end{enumerate}
%  \item GVI estimation and extensions.
%  \begin{enumerate}
%    \item GVI density as subgradient of risk measure, existence of solutions.
%    \item Coherent GVI (formerly EME) as positively homogeneous modification to GVI, existence of solutions, generalizations.
%    \item Examples of GVI and coherent GVI (KL, $\chi^2$, Renyi, TV).
%    \item Asymptotic results.
%  \end{enumerate}
%  \item Numerical results using transport maps, compare with BBVI.
%  \item Conclusions.
%  \item Appendix.
%\end{enumerate}
%}

 \noindent\fbox{%
     \parbox{0.97\textwidth}{
         \begin{center}
         \medskip
             \emph{\bfseries This is a ``working paper.'' A shorter, streamlined version with a different title and updated references and numerical experiments will appear in a future edition.}
         \medskip
         \end{center}
     }
 }

\section{Introduction}
Many statistical estimation problems can be formulated as the optimization problem
\begin{equation}\label{eq:intro-0}
  \max_{\theta\in\Theta}
    \,\left\{M_N(\theta)\eqdef\frac{1}{N}\sum_{n=1}^N m(\theta,y_n)\right\},
\end{equation}
where $\{y_n\}_{n=1}^N\subset\Upsilon$ are realizations of the noisy data $Y$,
$\Upsilon$ is the data space, $\theta\in\Theta$ are the parameters to be
estimated, and $m:\Theta\times\Upsilon\to\real$ is a pay-off or log-likelihood,
see \citep{SAvanDeGeer_2000a,AWvanDerVaart_2000a} and the
references therein. This class of problems encapsulates M-estimation and
empirical risk minimization problems, including regression and maximum
likelihood estimation as well as other training applications in machine learning. 
On the other hand, empirical approximations of traditional stochastic
optimization problems also have the form \eqref{eq:intro-0}, see e.g.,
\citep{AShapiro_DDentcheva_ARuszczynski_2014a}.
%We do not make any assumptions of the concavity or differentiability of $M_N : \Theta \to \mathbb R$ at this time.

In this paper, we study an extension of \eqref{eq:intro-0} that permits
measure-valued solutions.  This coherent generalized variational inference
(CGVI) problem is given by
\begin{equation}\label{eq:MLE}
  \max_{P\in\fA}\,
    \bbe_P\left[M_N\right],
\end{equation}
where $\fA$ is a set of admissible probability measures defined on subsets of
$\Theta$ and $\bbe_P$ denotes the expectation with respect to the probability
measure $P$.  Classically, the CGVI problem \eqref{eq:MLE} models the situation
in which $\theta$ are random {\em incidental} parameters
\citep{kiefer1956consistency}.  In contrast, we view \eqref{eq:MLE} as a
distributionally robust
%\footnote{When the data $\{y_n\}$ arise from observations, as opposed to a
%Monte Carlo sampling approximation, the term \textit{distributionally robust}
%is commonly employed in the stochastic programming literature to contrast with
%the case in which only the data $\{y_n\}$ are used.} 
version of \eqref{eq:intro-0} that enables the inclusion of prior
statistical information regarding the unknown deterministic {\em structural}
parameters $\theta$ through the definition of the set $\fA$. The idea for
distributionally robust optimization goes back to \citep{JZackova_1966}, but has
seen increased interest in recent years, see, e.g.,
\citep{DPKouri_2017a,DKuhn_etal_2019a,AShapiro_2017a}.

One practical consequence of \eqref{eq:MLE} is improved computational
tractability.  Many optimization algorithms
only guarantee convergence to locally optimal solutions, while global methods
often scale poorly with problem size, leading to intractability for large-scale
nonconcave problems. Consequently, if $M_n$ is not concave in $\theta$, then it
may be computationally infeasible to find a global maximum of
\eqref{eq:intro-0}. On the other hand, \eqref{eq:MLE} is a concave maximization
problem as long as $\fA$ is convex.  We may therefore view \eqref{eq:MLE}
as a concave relaxation of \eqref{eq:intro-0}.  Owing to the concavity of
\eqref{eq:MLE}, locally optimal solutions to \eqref{eq:MLE} are also
globally optimal and hence, local optimization methods can be used.
%We analyze two related approaches to defining $\fA$.  First,
Unfortunately, \eqref{eq:MLE} is an infinite-dimensional optimization problem,
which may be impossible to solve analytically or numerically without
discretization. However, as is commonly done in distributionally robust
optimization \citep{DKuhn_etal_2019a,AShapiro_2017a}, we can choose $\fA$ in
such a way that \eqref{eq:MLE} can be equivalently reformulated as a low
dimensional convex minimization problem.  To achieve this, we will employ
concepts from decision theory, including utility functions and risk measures,
to quantify our maximum tolerance for deviating from a prescribed prior
probability measure $\Pi$ on $\Theta$.
% The optimal probability measure from
%\eqref{eq:MLE} is related to the classical Bayesian posterior and is referred
%to as a CGVI.

A common method for comparing two probability measures is to use a
$\phi$-divergence \citep{ICsiszar_1963a,TMorimoto_1963a}.  The class of 
$\phi$-divergences includes the popular Kullback-Liebler (KL), $\chi^2$, Hellinger,
and R\'{e}nyi divergences.  As demonstrated in
\citep{ABenTal_MTeboulle_1986a,ABenTal_MTeboulle_1987,ABenTal_MTeboulle_2007a},
there is a fundamental link between $\phi$-divergences, utility functions, and
risk measures, a relationship that we will build on. In particular, we will
show that for a given utility functional $\cU$ acting on a space of random
variables $\mset$, we can define the disutility functional $\cV(X)=-\cU(-X)$
for $X \in \cM$ and ultimately a statistical divergence as $\Phi(p)=\cV^*(p)$,
where $p$ is the density of a probability measure $P$ with respect to the prior
$\Pi$ and $\cV^*$ denotes the Fenchel conjugate of $\cV$.  This construction is
closely related to risk measures via optimized certainty equivalents
\citep{ABenTal_MTeboulle_2007a} and the risk quadrangle
\citep{RTRockafellar_SUryasev_2002a}.
%Under appropriate assumptions
%specified below,
%if $\Phi$ is related to $\mathcal{V}$ by its Fenchel conjugate $\Phi^* =
%\mathcal{V}$, then
%we can show that the optimization problem \eqref{eq:MLE}
%with $\fD_{\varepsilon}$ in place of $\fA$ can be solved using the subgradient
%formula mentioned above and the two-dimensional problem
%
%By representing our preference with a convex function $\Phi$, we find a deep
%link to expected utility, the construction of risk measures using utility
%functions, and in many cases $\phi$-divergences as in
%\citep{ABenTal_MTeboulle_1986a,ABenTal_MTeboulle_1987,ABenTal_MTeboulle_2007a}.
%Starting with a utility function $\mathcal U$ defined over a space of random
%variables $\cM$, we can define a regret measure $\mathcal V(X) = -\mathcal
%U(-X)$ for $X \in \cM$ and ultimately a risk measure $\risk(X)$, see
%\citep{ABenTal_MTeboulle_1986a,ABenTal_MTeboulle_1987,ABenTal_MTeboulle_2007a}
%under optimized certainty equivalents and the risk quadrangle approach in
%\citep{RTRockafellar_SUryasev_2002a}. Under appropriate assumptions specified
%below, if $\Phi$ is related to $\mathcal{V}$ by its Fenchel conjugate $\Phi^* =
%\mathcal{V}$, then we can show that the optimization problem \eqref{eq:MLE}
%with $\fD_{\varepsilon}$ in place of $\fA$ can be solved using the subgradient
%formula mentioned above and the two-dimensional problem
%\begin{equation*}
%  \inf_{\lambda > 0, \mu \in \mathbb R}\{\mu + \lambda\varepsilon + (\lambda \Phi)^*(M_N - \mu)\}.
%\end{equation*}
This link between utility functions and statistical divergences is practically
appealing as utility functions have been intensely studied for decades, e.g.,
\citep{KJArrow_1970,PCFishburn_1988,JWPratt_1964,JvNeumann_OMorgenstren_2007},
and are perhaps more intuitive to choose than a statistical divergence $\Phi$.

Some of the most striking results associated with our decision-theoretic
framework for estimation arise when the utility functional $\cU$ is an expected
utility, i.e., $\cU(X) = \bbe[u(X)]$, where $u : \real\to\real$ is a
scalar utility function.  Many well-known utility functions $u$ give rise to
popular $\phi$-divergences (cf.\ \citep{ABenTal_MTeboulle_1987}). For example,
the ubiquitous exponential utility function generates the KL
divergence, while the isoelastic utility function generates the R\'{e}nyi
divergence.  In this setting, we define $\fA$ as a subset of probability
measures whose $\phi$-divergence from the prior measure $\Pi$ is less than a
prescribed tolerance $\varepsilon>0$.  With this $\fA$, \eqref{eq:MLE} is closely
related to the Gibbs posterior
\citep{PGBissiri_CCHolmes_SGWalker_2016,WJiang_MATanner_2008,TZhang_2006a,TZhang_2006b},
and more generally generalized variational inference (GVI) as introduced in
\citep{JKnoblauch_JJewson_TDamoulas_2019}.
Unlike traditional Bayesian inference, our approach allows for a misspecified
prior $\Pi$, is not predicated on a specific noise structure (model
misspecification), and is computationally tractable once prior samples are
available.  In fact, these three features were the primary motivation for
GVI and the so-called Rule of Three \citep{JKnoblauch_JJewson_TDamoulas_2019}.
There is other related work on measure-valued M-estimators, e.g.,
\citep{ABasu_BGLindsay_1994,ABasu_HShioya_CPark_2011,LGreco_WRacugno_LVentura_2008}.
However, \citep{LGreco_WRacugno_LVentura_2008} considers a Bayesian posterior
based on M-estimation using pseudo-likelihoods, which is less general than our
approach, and \citep{ABasu_BGLindsay_1994,ABasu_HShioya_CPark_2011} develop an
estimation theory in which divergences are also employed, but the
measure-valued M-estimators assume a parametric form.  Similarly,
$\phi$-divergences and Wasserstein distances have recently found broad
applicability in machine learning 
\citep{arjovsky17a,AGretton_KMBorgwardt_MJRasch_BSchoelkopf_ASmola_2012,DKuhn_etal_2019a,NIPS2016_6066};
particularly for constructing generative adversarial
networks (GANs) to estimate probability laws. 

Using the utility-based statistical divergence $\Phi$ described above, we
express our tolerance for deviation from $\Pi$ by restricting $\fA$ to a
smaller class of $\Pi$-absolutely continuous densities $p$ that satisfy
$\Phi(p)\le\varepsilon$ and denote this set by $\fD_\varepsilon$.
% given by
%\begin{equation}\label{eq:phi_deps}
%    \fD_\varepsilon\eqdef\left\{p \in L^1(\Theta,\cB,\Pi)
%    \,\vert\, p\ge 0\;\text{a.s.}, \;\; \bbe[p]=1,
%       \;\;\Phi(p) \le \varepsilon\,\right\}
%\end{equation}
%Here, $\cB$ denotes a $\sigma$-algebra of subsets of $\Theta$ and $\bbe$ denotes
%the expectation with respect to the prior measure $\Pi$.
By replacing $\fA$ with $\fD_\varepsilon$, we show that \eqref{eq:MLE} can
be reformulated as the two-dimensional convex stochastic program
\begin{equation}\label{eq:MLE-0}
  \min_{\lambda\ge 0,\,\mu\in\real}\; \left\{\mu + \varepsilon\lambda + (\lambda\Phi)^*(M_N-\mu)\right\},
\end{equation}
where $(\lambda\Phi)^*$ denotes the Fenchel conjugate of $\lambda \Phi$.  In fact,
we prove an explicit form for the CGVI density that depends on the
optimal $\mu$ and $\lambda$, and is related to convex subgradients
of $(\lambda\Phi)^*$.
In many practical cases, the CGVI density has a simple and computable form,
reducing the infinite-dimensional optimization problem \eqref{eq:MLE}
to a two-dimensional convex problem, regardless of the concavity and
smoothness of $M_N$ and the dimension of $\Theta$.
%We then demonstrate that the CGVI has a density $\bar{p}$ that is parametrized
%by two scalar parameters $\lambda > 0$ and $\mu \in \real$. The optimal
%density $\bar{p}$ is shown to be a subgradient of the Fenchel conjugate
%$(\lambda \Phi)^*$ at $M_N - \mu$, i.e.,  $\bar{p} \in \partial (\lambda
%\Phi)^*(M_N - \mu)$, which by definition means
%\[
%(\lambda \Phi)^*(X) \ge  (\lambda \Phi)^*(M_N - \mu) + \mathbb E[\bar{p}(X-M_N + \mu)]
%\]
%for all $X$ in an appropriate space of random variables.
%In many interesting cases, subgradients of  $(\lambda \Phi)^*$ have a simple
%computable form. This reduces the question of determining the CGVI to the
%solution of a two-dimensional convex optimization problem, regardless of the
%concavity and smoothness of $M_N$ and the dimension of $\Theta$.

As mentioned above, the $\phi$-divergence approach to CGVI with $\phi$ giving rise to the
KL divergence is closely related to the Gibbs posterior.
The Gibbs posterior was originally used to describe statistical
mechanics phenomena
\citep{RLDobrushin_1968a,RLDobrushin_1968b,HOGeorgii_2011,OELandfordIII_DRuelle_1969},
but has recently been applied in the setting of information theory and
estimation \citep{TZhang_2006a,TZhang_2006b}
and Bayesian statistics \citep{PGBissiri_CCHolmes_SGWalker_2016,WJiang_MATanner_2008}.  
%The Gibbs posterior somewhat relaxes the dependence of the Bayesian posterior on the prior\footnote{Though clearly the role of the prior is not entirely removed by the scaling parameter $W$.} and removes the need for an explicit noise distribution.  
The Gibbs posterior estimation problem associated with \eqref{eq:intro-0} is
\begin{equation}\label{eq:gibbs}
  \max_{p\in\fD}\;\sigma^{-1}\bbe_\Pi\left[p M_N\right] - D_{\rm KL}(p \| \pi)
\end{equation}
where $\sigma>0$ is the learning rate, $\mathfrak{D}$ is
the set of probability density functions on $\Theta$ with respect to the
prior measure $\Pi$, which is assumed to have density $\pi$, and
$D_{\rm KL}(\cdot\|\cdot)$ is the KL divergence.  This
problem was originally proposed and analyzed in the pioneering works
\citep{ICsiszar_1975,MDDonsker_SRSVaradhan_1975,AZellner_1988}, which
considered variational representations of Bayesian inference. The optimal
solution to \eqref{eq:gibbs} is given in closed-form by
\[
  \bar{p}(\theta) = \frac{\exp\left(\sigma^{-1} M_N(\theta)\right)}
     {\bbe_\Pi\left[\exp\left(\sigma^{-1} M_N\right)\right]}.
\]
As seen in this expression, when the pay-off function $M_N$ is a log-likelihood
function, the Gibbs posterior $\bar{p}$ recovers the traditional Bayesian
posterior by setting $\sigma=1$ and multiplying with the prior density $\pi$.
Note that \eqref{eq:MLE} with the constraint $D_{\rm KL}(p\|\Pi)\le\varepsilon$
is a \textit{constrained} optimization problem and \eqref{eq:gibbs} is an
\textit{unconstrained} problem.  In fact, \eqref{eq:gibbs} can be thought
of as an approximation to \eqref{eq:MLE} where the $D_{\rm KL}(p \| \pi)$ term
penalizes for the inequality constraint.

The learning rate $\sigma$ calibrates the influence of the likelihood or payoff
function on the posterior. A significant amount of effort has recently gone
into determining the ``best'' learning rate for a given problem. We refer the
reader to
\citep{PGruenwald_2012,gruenwald2018inconsistency,CCHolmes_SGWalker_2017,JWMiller_DBDunson_2019,NSyring_RMartin_2019,wu2020comparison},
the latter of which contains numerous related references and provides a
numerical study comparing the various perspectives.   We will later observe
that the parameter $\lambda$ in \eqref{eq:MLE-0}, which is the Lagrange
multiplier for the constraint $\Phi(p) \le \varepsilon$, is an automatic
choice for $\sigma$ when $\Phi$ is the KL divergence.

Though the Gibbs posterior is well-studied, it was shown in
\citep{JKnoblauch_JJewson_TDamoulas_2019,knoblauch2019frequentist} that it may
not perform well under a misspecified prior, which is often considered the
rule and not the exception in machine learning.  In fact, the papers
\citep{JKnoblauch_JJewson_TDamoulas_2019,knoblauch2019frequentist} show that
the R\'{e}nyi divergence can outperform the KL divergence when the prior is
misspecified.  This raises the question as to whether other
$\phi$-divergences have similar properties or special uses. Though we do
not limit ourselves purely to $\phi$-divergences, these observations provide
further justification for our study.

The remaining sections are organized as follows. In Section~\ref{sect:GGE}, we
introduce the basic notation and assumptions as well as some tools from convex
and functional analysis.  Section~\ref{sect:genphi} is dedicated to risk-averse
estimation.  We begin by highlighting the connections between
$\phi$-divergences and classical utility theory. Using the general theory
developed in the first subsection, we then prove the existence of CGVI densities
and the link between GVI and CGVI. We then demonstrate the existence of and
provide a general formula for CGVI densities in the case of the
$\phi$-divergence.  Afterwards, we discuss the connections to risk measures
and provide four explicit examples of $\phi$-divergence CGVI densities.
Following these theoretical results, we analyze the asymptotic consistency for
empirical approximations of CGVIs, including the effects of the large data limit
and sampling limit on the computable densities in Section~\ref{sec:comp}. We
conclude this section with a numerical example that demonstrates the ability of
CGVIs to outperform both traditional Bayesian posteriors and the Gibbs
posterior when the model and prior is misspecified. Finally, we conclude in
Section~\ref{sec:empir} by introducing an extension to ``Empirical CGVI'' for
situations in which we are unsure which utility function to choose, but for
which a large set of observations $Y$ is available and perhaps a few
observations of $\theta$.
%We first introduce the
%concrete CGVI problem formulation.  We then define $\fA$ using the
%$\phi$-divergence. For this choice of $\fA$, we prove existence of solutions
%and study the asymptotic behavior as the data size $N$ increases.  Moreover, we
%provide concrete examples of $\phi$-divergence CGVIs using the KL, $\chi^2$,
%R\'{e}nyi and total variation divergences.  After that, we illustrate the
%theory using various applications in enzymatic kinetics and acoustic testing
%of engineered systems. We then provide extensions including the Wasserstein
%and moment matching CGVIs as described above.  We conclude by highlighting the
%connection between CGVI and risk measures.

\section{Background}

In this section, we provide the required background information to analyze
GVI problems in the context of risk management.  We first provide the
notation and problem assumptions used throughout this paper and then review
the concepts of $\phi$-divergence, regret and risk functionals.

\subsection{Notation and Problem Assumptions}\label{sect:GGE}

We denote by $\cB\subseteq 2^\Theta$ a $\sigma$-algebra on $\Theta$ and by
$\fP=\fP(\Theta,\cB)$ the set of probability measures on measurable space
$(\Theta,\cB)$. We assume that we have a
probability measure describing our prior knowledge of $\theta$, denoted by
$\Pi\in\fP$ and that the probability space $(\Theta,\cB,\Pi)$ is complete.
For notational convenience, we denote the expectation with respect to $\Pi$
simply by $\bbe$.  For arbitrary $P\in\fP$, we denote the expectation with
respect to $P$ by $\bbe_P$. In the subsequent discussion, we require the set of
probability density functions with respect to the prior measure $\Pi$, which
we denote by
\[
  \fD \eqdef \{\, p:\Theta\to[0,\infty]\,\vert\,p\;\text{is $\cB$-measurable and }
            \bbe[p]=1\} \subset L^1(\Theta,\cB,\Pi).
\]
In particular, for every $P\in\fP$ that is absolutely continuous with respect
to $\Pi$ (denoted $P\ll \Pi$), there exists $p\in\fD$ such that
$\mathrm{d}P = p\, \mathrm{d}\Pi$, i.e., $p$ is the Radon-Nikodym derivative of
$P$ with respect to $\Pi$. After analyzing the GVI problem through the lense of
risk management, we introduce a related GVI problem, which we call coherent
GVI or CGVI for short, of the form
\begin{equation}\label{eq:MLE0}
  \max_{P\in\fA}\,
    \bbe_P\left[M_N\right],
\end{equation}
where $\fA\subseteq\fP$ is an appropriately selected subset of probability
measures. 
%One can view this procedure as a distributionally
%robust version of \eqref{eq:intro-0}, where the maximizing measure provides
%statistical information regarding the unknown parameters
%$\theta$ \textcolor{red}{(Consider removing this sentence)}.
Clearly, if $\fA=\{\Pi\}$, then the maximizing measure is $\Pi$
(i.e., full confidence in the prior).  On the other hand, if $\fA=\fP$, then
the maximizing measures are convex combinations of point masses centered at the
maximizers to \eqref{eq:intro-0} (i.e., no confidence in the prior)
\citep[Th.~7.37]{AShapiro_DDentcheva_ARuszczynski_2014a}.  The key to CGVI
is in determining reasonable and application-relevant classes of probability
measures $\fA$, which can be done using utility theory and risk management.
One common and inuitive form for $\fA$ is
\begin{equation}\label{eq:div}
  \fA = \{\,P\in\fP\,\vert\,D(P\| \Pi)
    \le\varepsilon\,\},\quad\varepsilon \ge 0,
\end{equation}
where $D:\fP\times\fP\to[0,\infty]$ is a statistical metric or divergence
\citep{STRachev_1991}.  In words, \eqref{eq:MLE0} seeks a maximizer of the
total expected pay-off over the set of probabilities measures that are within
$\varepsilon$ of the prior measure $\Pi$.  Note that if $\varepsilon = 0$, then
$\fA=\{\Pi\}$ and if $\varepsilon=\infty$, then $\fA=\fP$.
%  At the end of the
%paper, we will briefly investigate two extensions based on moment data and the
%Wasserstein distance to generate empirical estimates of $\theta$ that do not
%depend on a continuous prior $\Pi$.

In the subsequent section, we require the following techincal definitions.
We denote by $\dset$ and $\mset$ two Banach spaces of $\cB$-measurable
functions from $\Theta$ into $\real$ for which
\[
  \bbe[|p X|] < \infty \quad\forall\, p\in\dset,\;X\in\mset.
\]
We assume that $\dset$ and $\mset$ contain all simple functions on $\Theta$
and we associate with $\mset$ and $\dset$ the bilinear form
$\langle p,X\rangle\eqdef\bbe[p X]$ for $p\in\dset$ and $X\in\mset$, making
$(\mset,\dset,\langle\cdot,\cdot\rangle)$ paired topological vector spaces.  We
denote the topological dual space of $\dset$ by $\dset^*$ and similarly for
$\mset$. We abuse notation and denote the canonical embedding of $\dset$ into
$\mset^*$ simply by $\dset\subseteq \mset^*$ and the canonical embedding of
$\mset$ into $\dset^*$ by $\mset\subseteq\dset^*$.  We denote the weak topology
on $\mset$ induced by $\dset$ by $\sigma(\mset,\dset)$ and the weak topology on
$\dset$ induced by $\mset$ by $\sigma(\dset,\mset)$.  We will restrict this
abstract setting to two situations: $\dset=\mset^*$ and $\mset=\dset^*$, the
latter presenting additional, significant, theoretical complications.  The
first case represents a majority of use cases such as
$\mset=L^s(\Theta,\cB,\Pi)$ with $s\in[1,+\infty)$ or $\mset$ is an
appropriately defined Orlicz space.  The second case covers the situation when
$\mset=L^\infty(\Theta,\cB,\Pi)$ is paired with $\dset=L^1(\Theta,\cB,\Pi)$
endowed with the weak$^*$ topology.
%Common examples of this
%general construction are $\dset=L^s(\Theta,\cB,\Pi)$ for $s\in[1,\infty)$ and
%$\mset=L^t(\Theta,\cB,\Pi)$ for $\frac{1}{s}+\frac{1}{t}=1$ or $\dset$ and
%$\mset$ are appropriately defined Orlicz spaces.

Throughout, we require a handful of common concepts from convex and variational
analysis.  Let $\mathcal{X}$ be a Banach space with topological dual space
$\mathcal{X}^*$ and duality pairing
$\langle\cdot,\cdot\rangle_{\mathcal{X}^*,\mathcal{X}}$.  The Fenchel conjugate
of the function $f:\mathcal{X}\to[-\infty,+\infty]$ is defined by
\[
  f^*(\eta) \eqdef \sup_{x\in\mathcal{X}} \{\langle \eta, x\rangle_{\mathcal{X}^*,\mathcal{X}} - f(x)\}
   \quad\forall\,\eta\in\mathcal{X}^*
\]
and we employ the convention that the Fenchel conjugate of a function
$g:\mathcal{X}^*\to[-\infty,+\infty]$ is defined on $\mathcal{X}$ rather than
on all of $\mathcal{X}^{**}$.  The function $f$ is said to be proper if
$f(x)>-\infty$ for all $x\in\mathcal{X}$ and there exists $x_0\in\mathcal{X}$
such that $f(x_0)<\infty$.  We denote the effective domain of $f$ by
$\mathrm{dom}\,f \eqdef \{x\in\mathcal{X}\,\vert\,f(x) < \infty\}$.
The function $f$ is said to be closed with respect to a topology on
$\mathcal{X}$ (e.g., weak or strong) if its epigraph,
$\mathrm{epi}\,f \eqdef \{(x,t)\in\mathcal{X}\times\real\,\vert\, f(x) \le t\}$,
is closed in the respective product topology on $\mathcal{X}\times\real$.
Recall that if $f$ is convex, then the subdifferential of $f$ at
$x\in\mathcal{X}$ is defined by
\[
  \partial f(x) \eqdef \{\eta\in\mathcal{X}^*\,\vert\, f(x') - f(x) \ge \langle \eta, x'-x\rangle_{\mathcal{X}^*,\mathcal{X}}\;\;\forall\,x'\in\mathcal{X}\}.
\]
If $f$ is proper, closed and convex, then $f = f^{**}$, where $f^{**}$ denotes
the Fenchel conjugate of $f^*$.  Moreover, the Fenchel-Young inequality states
that
\begin{equation}\label{eq:fy-ineq}
  f(x) + f^*(\eta) \ge \langle \eta, x \rangle_{\mathcal{X}^*,\mathcal{X}}
\end{equation}
and equality holds in \eqref{eq:fy-ineq} if and only if $\eta\in\partial f(x)$,
or equivalently if $x\in\partial f^*(\eta)$.  Finally, for a set
$C\subset\mathcal{X}$, we denote by $\delta_C:\mathcal{X}\to[-\infty,+\infty]$
the indicator function of $C$ and by
$\sigma_C:\mathcal{X}^*\to[-\infty,+\infty]$ the support function of $C$. That
is,
\[
  \delta_C(x)\eqdef\left\{\begin{array}{ll}
    0 & \text{if $x\in C$} \\
    +\infty & \text{otherwise}
  \end{array}\right.
  \qquad\text{and}\qquad
  \sigma_C(\eta) \eqdef \sup_{x\in C}\; \langle \eta, x\rangle_{\mathcal{X}^*,\mathcal{X}}.
\]

\subsection{Connections Between the $\phi$-Divergence and Risk Management}

In this subsection, we review disutility (or regret) functions and their
close relation to the $\phi$-divergence \citep{ICsiszar_1963a,TMorimoto_1963a}.
We say that a function $v:\real\to[-\infty,+\infty]$ is a
{\em risk-averse disutility function} if it is convex, increasing, and
satisfies $v(x)\ge x$ for all $x\in\real$ and $v(x)=x$ implies $x=0$.  Two
popular risk-averse disutility functions are the exponential disutility
function
\begin{equation}\label{eq:exputil}
  v(x) = \frac{(\exp(r x)-1)}{r}, \quad r > 0,
\end{equation}
and the isoelastic disutility function
\begin{equation}\label{eq:isoutil}
  v(x) = \left(1+\frac{x}{\beta}\right)^{\beta}-1, \quad \beta < 0.
\end{equation}
Since $v$ is increasing, convex, and satisfies $v(x) \ge x$ for all $x\in\real$
and $v(0)=0$, we have that its Fenchel conjugate $v^*$ is convex, $v^*(1) = 0$
and $v^*(t) = +\infty$ for all $t < 0$.
% To see that
%$v^*(1)=0$, the Fenchel-Young
%inequality yields
%\[
%  v^*(1) = v(0) + v^*(1) \ge 0
%\]
%and the order-exchange property of the Fenchel conjugate yields
%\[
%  v^*(t) \le (x)^*(t) = \left\{\begin{array}{ll}
%    0 & \text{if $t = 1$} \\
%    +\infty & \text{otherwise}
%  \end{array}\right.  \qquad\implies\qquad
%  v^*(1) \le 0.
%\]
%Moreover, to see that $v^*(t)=+\infty$ for $t<0$, we note that since
%$v$ is increasing, $v(x)\ge x$ and $v(0)=0$, we have that $v(x) \le 0$ for all
%$x\le 0$ and
%\[
%  xt - v(x) \le x(t-1), 
%\]
%where $(t-1)<0$.  The lower bound can be made arbitrarily large
%by sending $x$ to $-\infty$.  Therefore, $v^*(t) = +\infty$ for $t < 0$.
These properties of $v^*$ are exactly those required of $\phi$-divergence
functions.  In particular, given a function $\phi:\real\to[0,+\infty]$ that is
convex and satisfies $\phi(1)=0$ and $\phi(t) = +\infty$ for all $t<0$, we can
define the $\phi$-divergence of a probability measure $P\in\fP$ from the prior
$\Pi$ by
\[
  D_\phi(P\|\Pi) \eqdef \left\{\begin{array}{ll}
     \bbe[\phi(p)] & \text{if $P\ll \Pi$ and $p\,\mathrm{d}\Pi = \mathrm{d}P$} \\
     +\infty & \text{otherwise}
  \end{array}\right. .
\]
One common challenge when using a $\phi$-divergence for GVI is in choosing
$\phi$ \citep{JKnoblauch_JJewson_TDamoulas_2019}---a task that is often
difficult to conceptualize.  Given the close ties between $\phi$ and a
disutility function $v$, we suggest first choosing $v$ and then defining $\phi$
as $\phi=v^*$.  Often, the concept of disutility is more intuitive and can be
tailored to the underlying application.

A popular approach in economic utility theory is to choose disutility functions
that exhibit certain curvature properties such as constant absolute risk
aversion (CARA) in the sense of Arrow and Pratt
\citep{KJArrow_1970,JWPratt_1964}.  To achieve CARA, one first defines the
absolute risk-aversion coefficient
\[
  A(x) = \frac{v''(x)}{v'(x)}
\]
and then generates a CARA disutility function by solving the ordinary
differential equation (ODE) $A(x)=c$ for some constant $c\in\real$.  A related
concept is constant relative risk aversion (CRRA), where the relative
risk-aversion coefficient is given by
\[
  R(x) = xA(x).
\]
Similarly, one can generate a CRRA disutility function by solving the ODE
$R(x)=c$ for some constant $c\in\real$.  Recall that the exponential disutility
function satisfies CARA while the isoelastic disutility function satisfies
CRRA.
%A more tailored approach to selecting disutility functions is described
%in \citep{GJWarren_2019}. 
See Figure~\ref{fig:utility-phi} for common
disutility functions and their associated $\phi$-divergence functions. We note
that the exponential disutility function generates the KL divergence, the
isoelastic disutility generates the R\'{e}nyi divergence, and the quadratic
disutility function generates the $\chi^2$ divergence. The fourth disutility
function in Figure~\ref{fig:utility-phi} is piecewise linear and is not risk
averse.  However, it is of interest since it generates the total variation
divergence. 

\begin{figure}[!ht]
  \centering
  \includegraphics[width=0.4\textwidth]{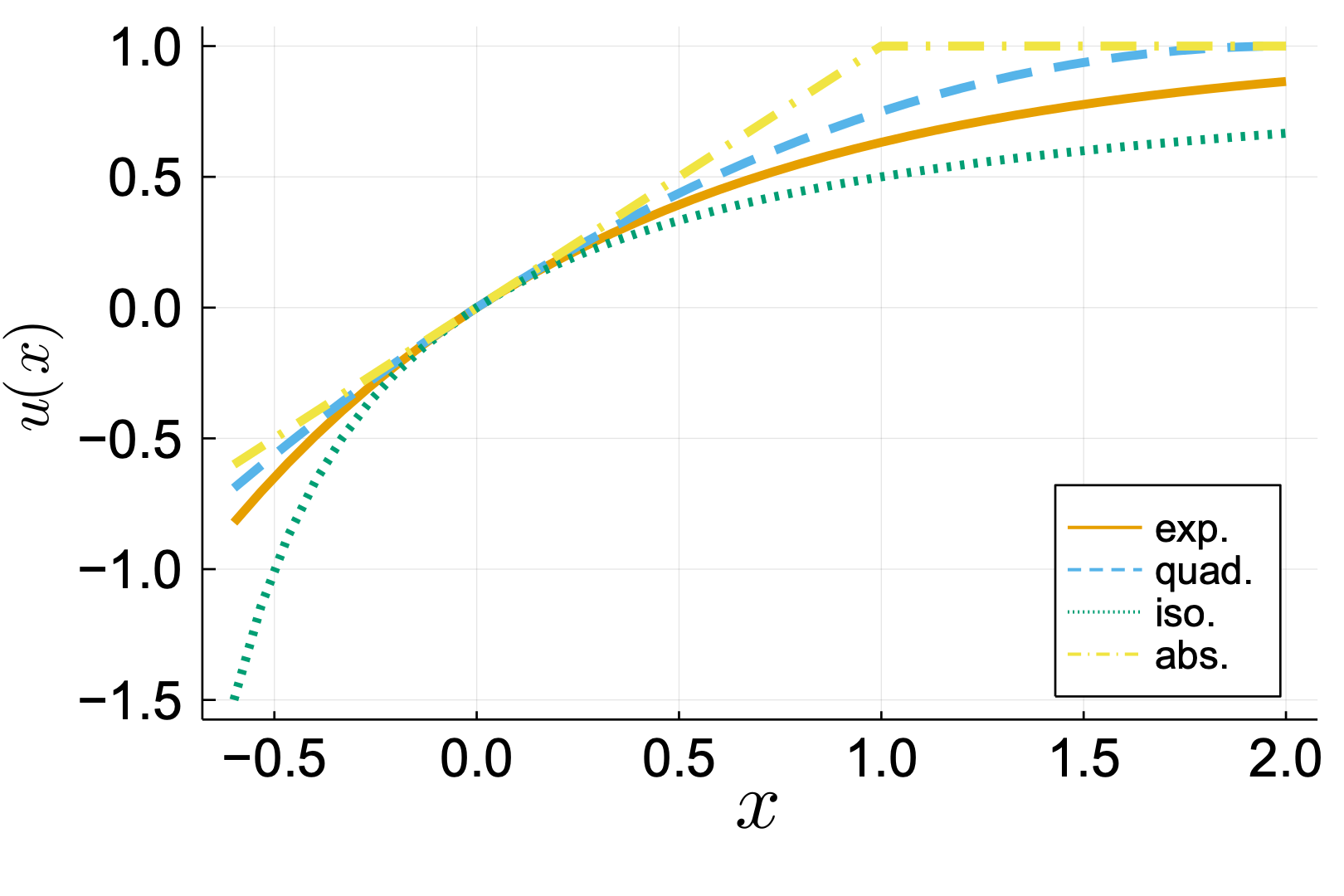}
   \includegraphics[width=0.4\textwidth]{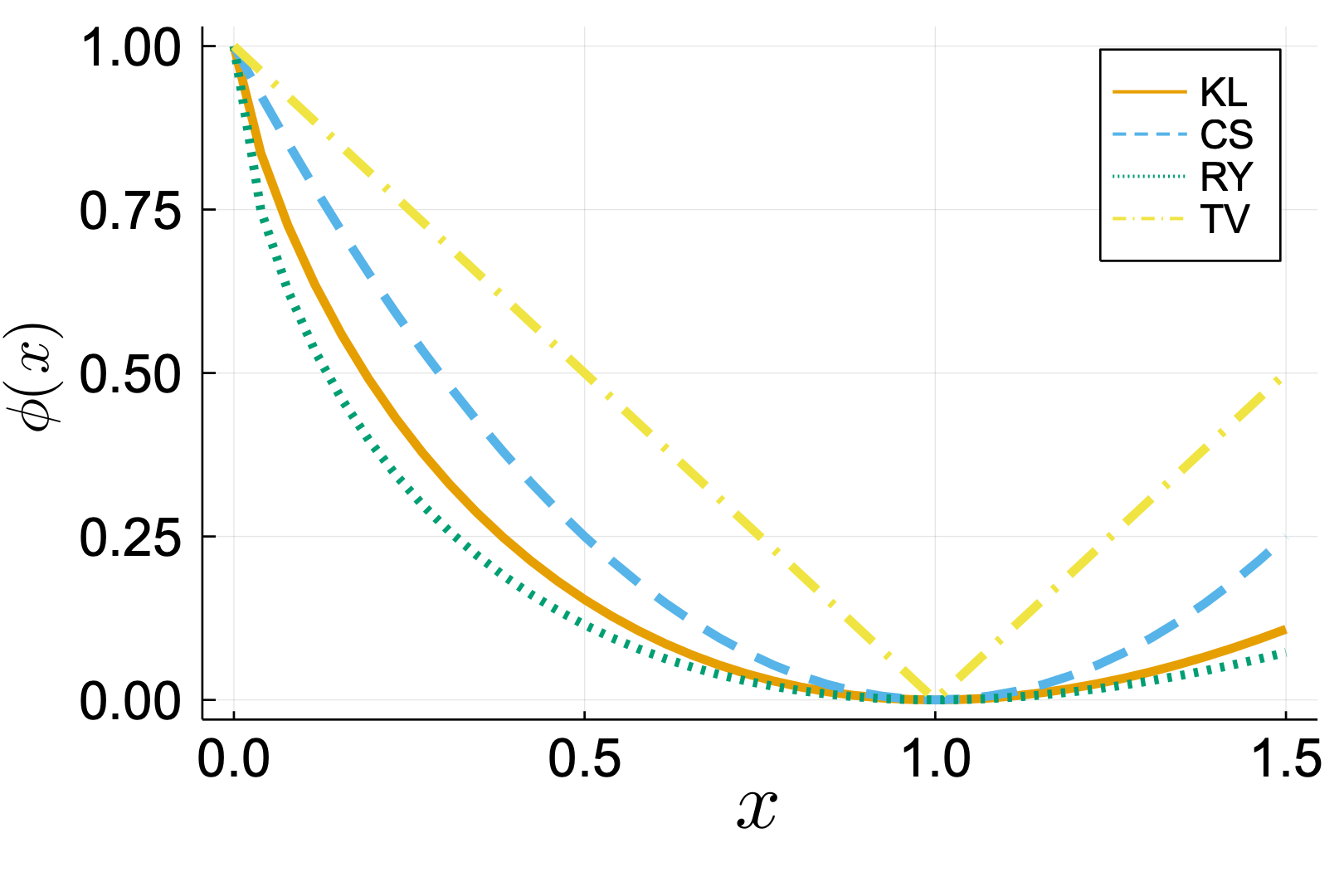}
  \caption{
Four common disutility functions and their associated
$\phi$-divergences: a) the exponential disutility (exp.) and Kullback-Leibler
divergence (KL), b) a truncated quadratic disutility (quad.) and $\chi^2$
divergence (CS), c) the isoelastic disutility and R\'{e}nyi divergence (RY), d)
an absolute disutility and TV divergence (TV). }
  \label{fig:utility-phi}
\end{figure} 

For the purpose of our theoretical results, and to facilitate the relationship
between GVI and risk measures, we work in a slightly more general setting than
traditional utility theory.  We say that the functional
$\cV:\mset\to[-\infty,+\infty]$ is a
{\em risk-averse expected disutility function} if it satisfies:
\begin{itemize}
  \item[(V1)] {\bf Convexity:} $\cV$ is proper, $\sigma(\mset,\dset)$-lower semicontinuous and convex;
  \item[(V2)] {\bf Risk Aversion:} For any $X\in\mset$, $\cV(X)\ge \bbe[X]$ and $\cV(X)=\bbe[X]$ implies $X\equiv 0$;
  \item[(V3)] {\bf Monotonicity:} If $X,\,X'\in\mset$ satisfy $X\le X'$ a.s., then $\cV(X)\ge\cV(X')$.
\end{itemize}
Commonly, $\cV$ has the form
\[
  \cV(X) = \bbe[v(X)],
\]
where $v:\real\to[-\infty,+\infty]$ is a risk-averse disutility function.
According to \cite{JvNeumann_OMorgenstren_2007}, any ``rational'' decision
maker has an expected disutility function that characterizes their decision
when faced with risky outcomes.  In their theory, the decision maker's
preference (denoted by the binary relationships $\prec$, $\succ$ and $\sim$)
follows four axioms when deciding between any three random outcomes $X$, $Y$,
and $Z$:
\begin{itemize}
  \item {\bf Completeness:} Either $X\prec Y$, $X\succ Y$ or $X\sim Y$;
  \item {\bf Transitivity:} If $X\prec Y$ ($X\sim Y$) and $Y\prec Z$ ($Y\sim Z$), then $X\preceq Z$ ($X\sim Z$);
  \item {\bf Continuity:} If $X\prec Y \prec Z$, then there exists
        $t\in[0,1]$ such that $tX + (1-t)Z \sim M$;
  \item {\bf Independence:} If $X\prec Y$, then $tX + (1-t)Z\preceq tY + (1-t)Z$ for all $t\in(0,1]$.
\end{itemize}
Although the von Neumann-Morgenstern theory ensures the existence of an
expected disutility function, it does not provide a constructive means to
determine the disutility function.  However, there have been attempts to
systematically select a decision maker's disutility function
\citep{GJWarren_2019}.

Analogously, we can generalize the $\phi$-divergence using the expected
disutility functional $\cV$ as $\Phi(p)=\cV^*(p)$. Our next result lists the
properties of $\Phi$ and demonstrates that $\Phi$ in fact defines a statistical
divergence.

\begin{proposition}\label{prop:Phi-conj}
  Let the regret functional $\cV:\mset\to[-\infty,+\infty]$ satisfy (V1)--(V3).
  Then, the functional $\Phi:\dset\to[-\infty,+\infty]$ defined by
  \[
    \Phi(p) \eqdef \cV^*(p)
  \]
  is proper, $\sigma(\dset,\mset)$-lower semicontinuous, convex and satisfies
  the following properties:
  \begin{enumerate}
    \item $\Phi(p)\ge 0$ for all $p\in\dset$;
    \item $\Phi(p)=0$ if $p\equiv 1$ a.s.;
    \item $\Phi(p)<\infty$ implies $p\ge 0$ a.s.;
    \item If $\partial\cV(0)=\{1\}$, then $\Phi(p)=0$ implies $p\equiv 1$ a.s.;
    \item If $\partial\cV(0)=\{1\}$, then
          $\mathrm{dom}\,\Phi=\{1\}$
          if and only if $\cV$ is positively homogeneous.
  \end{enumerate}
\end{proposition}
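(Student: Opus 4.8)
The plan is to read the entire statement through convex conjugate duality in the paired spaces $(\mset,\dset)$, using throughout that $\cV(0)=0$ — the normalization built into the notion of a regret functional, which is in any case consistent with (V2) (read as $\cV(X)=\bbe[X]$ iff $X\equiv 0$) and with the prototypical form $\cV(X)=\bbe[v(X)]$, $v(0)=0$. Since $\Phi=\cV^*$ is by definition the pointwise supremum of the $\sigma(\dset,\mset)$-continuous affine functions $p\mapsto\langle p,X\rangle-\cV(X)$, $X\in\mset$, it is automatically convex and $\sigma(\dset,\mset)$-lower semicontinuous; properness then drops out of parts (1) and (2), since (1) gives $\Phi>-\infty$ everywhere and (2) exhibits $1\in\mathrm{dom}\,\Phi$. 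The remaining inputs I rely on are (V1), which via Fenchel--Moreau yields $\cV=\cV^{**}$, and the equality case in the Fenchel--Young inequality \eqref{eq:fy-ineq}.

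For (1) and (2) I would probe the supremum defining $\cV^*$ at the single test point $X=0$: this gives $\Phi(p)\ge\langle p,0\rangle-\cV(0)=0$ for every $p\in\dset$, which is (1); for (2), (V2) gives $\langle 1,X\rangle-\cV(X)=\bbe[X]-\cV(X)\le 0$ for all $X$, so $\Phi(1)\le 0$, whence $\Phi(1)=0$ by (1). Property (3) is the only step where monotonicity enters: arguing by contraposition, if $p$ fails to be a.s.\ nonnegative, choose $c>0$ with $\Pi(B)>0$ for $B=\{p\le -c\}$ and test $\cV^*(p)$ with the simple functions $X_t=-t\mathbf{1}_B$, $t>0$. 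Then $\langle p,X_t\rangle=-t\,\bbe[p\mathbf{1}_B]\ge tc\,\Pi(B)$, while $X_t\le 0$ a.s.\ forces $\cV(X_t)\le\cV(0)=0$ by monotonicity (V3); hence $\Phi(p)\ge tc\,\Pi(B)\to+\infty$, i.e.\ $\Phi(p)=+\infty$.

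For (4), assume $\partial\cV(0)=\{1\}$ and $\Phi(p)=\cV^*(p)=0$. Evaluating the Fenchel--Young inequality \eqref{eq:fy-ineq} for $\cV$ at $X=0$ gives $\cV(0)+\cV^*(p)\ge\langle p,0\rangle$, i.e.\ $0+0\ge 0$, so equality holds; the equality case then yields $p\in\partial\cV(0)=\{1\}$, i.e.\ $p\equiv 1$ a.s. For (5), again with $\partial\cV(0)=\{1\}$, I would invoke the standard conjugacy of sublinear functions: if $\cV$ is positively homogeneous then, being also proper, closed, convex with $\cV(0)=0$, it is sublinear, so $\cV^*=\delta_{\partial\cV(0)}=\delta_{\{1\}}$ and hence $\mathrm{dom}\,\Phi=\{1\}$; conversely, if $\mathrm{dom}\,\Phi=\{1\}$ then, using $\Phi(1)=0$ from (2), $\Phi=\delta_{\{1\}}$, and Fenchel--Moreau gives $\cV=\cV^{**}=\delta_{\{1\}}^*=\sigma_{\{1\}}=\bbe[\cdot]$, which is positively homogeneous. (It is worth remarking that, by (V2), $\cV=\bbe[\cdot]$ is in fact excluded, so the operative content of (5) is that a strictly risk-averse $\cV$ always has $\mathrm{dom}\,\Phi\supsetneq\{1\}$.)

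The hard part, as I see it, is not any individual computation but making the duality bookkeeping rigorous in the delicate pairing $\mset=\dset^*$ (the $L^\infty$–$L^1$ case flagged in Section~\ref{sect:GGE}): one must verify that $\cV$ is closed in the correct weak topology so that $\cV=\cV^{**}$ is legitimate, that $\partial\cV(0)$ is taken inside $\dset$ rather than all of $\mset^*$, and that the perturbations $X_t$ used in (3) genuinely lie in $\mset$ and pair finitely against $p\in\dset$ — the latter guaranteed by the standing assumptions that $\mset$ contains all simple functions and $\bbe[|pX|]<\infty$. A minor but real nuisance is keeping the direction of the monotonicity inequality (V3) consistent with $\cV(0)=0$ in the proof of (3).
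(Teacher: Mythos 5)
Your proof is correct and follows essentially the same route as the paper's: testing the conjugate at $X=0$ for properties~1--2, a simple-function perturbation $-t\mathbbm{1}_B$ combined with monotonicity for property~3, the Fenchel--Young equality case giving $p\in\partial\cV(0)$ for property~4, and the indicator/support-function duality (with Fenchel--Moreau) for property~5, relying on the same implicit normalization $\cV(0)=0$ and the same (increasing) reading of (V3) that the paper's own argument uses.
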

\begin{proof}
  $\Phi$ is proper, $\sigma(\dset,\mset)$-lower semicontinuous and convex by
  the definition of the Fenchel conjugate \citep[pg.~188]{RTRockafellar_1971a}.
  Property~1 follows from the Fenchel-Young inequality and the fact that
  $\cV(0) = 0$, i.e.,
  \begin{equation}\label{eq:fy-proof1}
    \Phi(p) = \Phi(p) + \cV(0) = \Phi(p) + \Phi^*(0) \ge \bbe[0 p] = 0 \quad\forall\, p\in\dset.
  \end{equation}
  Property~2 follows from (U2).  That is, $X\mapsto\bbe[X]-\cV(X)$ has a
  unique maximizer at $X\equiv 0$ with maximum value 0, and hence
  $\Phi(1)=\cV^*(1)=0$.
  Property~3 follows from (U2) and (U3).  First notice that
  \[
    \cV(X) \le \cV(0) = 0 \quad\forall\,X\le 0\;\;\text{a.s.}
  \]
  and let $p\in\dset$ be such that
  $\Theta_p=\{\theta\in\Theta\,\vert\,p(\theta)<0\}$
  has positive probability with respect to the prior, i.e., $\Pi(\Theta_p) > 0$.
  By setting
  \[
    X_p(\theta) = \left\{\begin{array}{ll}
     -1 & \text{if $\theta\in\Theta_p$} \\
     0 & \text{otherwise}
    \end{array}\right. ,
  \]
  we see that $X_p\in\mset$ since $\mset$ includes all simple functions and
  \[
    \bbe[tX_p p] - \cV(tX_p) \ge t\bbe[X_p p] > 0 \quad\forall\,t>0.
  \]
  Taking the supremum in $t>0$ demonstrates that $\Phi(p)=+\infty$, proving
  property~3.
  For property~4, the Fenchel-Young inequality \eqref{eq:fy-proof1} holds with
  equality if and only if
  \[
    0 \in \partial\Phi(p) \quad\iff\quad p \in \partial\cV(0).
  \]
  and the desired result follows since $\partial\cV(0)=\{1\}$ by assumption.
  Finally, to prove property~5, we first note that if
  $\mathrm{dom}\,\Phi=\{1\}$, then
  $\Phi$ is the indicator function of $\{1\}$ and hence
  $\cV(X)=\Phi^*(X)=\bbe[X]$, which is positively homogeneous.  On the other
  hand, if $\cV$ is positively homogeneous, then $\Phi$ is an indicator
  function.  Since $\Phi(p) > 0$ unless $p\equiv 1$ a.s., we must have that
  $\Phi(p)=\infty$ unless $p\equiv 1$ a.s.
\end{proof}

Proposition~\ref{prop:Phi-conj} demonstrates that any risk-averse expected
disutility function $\cV$ that satisfies $\partial\cV(0)=\{1\}$, defines the
statistical divergence
\[
  D_\Phi(P\|\Pi) \eqdef \left\{\begin{array}{ll}
     \Phi(p)=\cV^*(p) & \text{if $P\ll \Pi$ and $p\,\mathrm{d}\Pi = \mathrm{d}P$ with $p\in\dset$} \\
     +\infty & \text{otherwise}
  \end{array}\right. .
\]
$D_\Phi$ clearly generalizes the $\phi$-divergence since we can define
$\Phi(p)=\bbe[\phi(p)]$ with $\phi(t)=v^*(t)$ and $\cV(X)=\bbe[v(X)]$.

\subsection{Risk Measures and Connections to Expected Disutility}\label{ss:risk}

A prinicpal contribution of this work is the demonstration of connections
between GVI and risk measures.  Risk measures are functionals used to quantify
the overall loss or hazard associated with a random outcome. The concept of a
risk measure is also closely linked to utility theory.  In fact, given an
expected disutility function $\cV$, we can define the risk measure
$\cR:\mset\to[-\infty,+\infty]$ projected from $\cV$ by
\begin{equation}\label{eq:risk-reg}
  \cR(X) \eqdef \min_{t\in\real} \;\{t + \cV(X-t)\}.
\end{equation}
The class of risk measures projected from expected disutility functions as in
\eqref{eq:risk-reg} form a so-called risk quadrangle
\citep{RTRockafellar_SUryasev_2013a} and are generalizations of optimized
certainty equivalents (OCE)
\citep{ABenTal_MTeboulle_1986a,ABenTal_MTeboulle_2007a}. In particualar,
an OCE is a risk measure given by \eqref{eq:risk-reg} with $\cV(X)=\bbe[v(X)]$,
where $v$ is a disutility function.  It is common to require that a risk
measure $\cR$ satisfies some or all of the following axioms:
\begin{itemize}
  \item[(R1)] {\bf Convexity:} $\cR$ is proper, $\sigma(\mset,\dset)$-lower semicontinuous and convex;
  \item[(R2)] {\bf Monotonicity:} If $X,\,X'\in\mset$ satisfy $X\le X'$ a.s., then $\cR(X)\ge\cR(X')$;
  \item[(R3)] {\bf Translation Equivariance:} $\cR(X+t) = \cR(X)+t$ for all $X\in\mset$ and $t\in\real$;
  \item[(R4)] {\bf Positive Homogeneity:} $\cR(tX) = t\cR(X)$ for all $X\in\mset$ and $t\ge 0$.
\end{itemize}
If $\cR$ satisfies (R1)--(R3), it is referred to as a {\em convex} risk measure
\citep{HFollmer_ASchied_2002a} and if it satisfies (R1)--(R4), it is
{\em coherent} \citep{PArtzner_FDelbaen_JMEber_DHeath_1999a}.  Coherent risk
measures satisfy the worst-case expectation representation
\begin{equation}\label{eq:coherent-risk}
  \cR(X) = \sup_{\eta\in\fA} \;\langle \eta, X\rangle_{\mset^*,\mset},
\end{equation}
where the {\em risk envelope} $\fA\subset\mset^*$ is given by
$\fA\eqdef\mathrm{dom}\,\cR^*=\partial\cR(0)$.  Axiom (R1) ensures that $\fA$ is
nonempty, closed and convex, while (R2) ensures that all $\eta\in\fA$
define nonnegative measures.  In particular, let $B\in\cB$ be such that
$\langle \eta, \mathbbm{1}_B\rangle_{\mset^*,\mset} < 0$, where $\mathbbm{1}_B$
denotes the characteristic function of $B$, i.e., $\mathbbm{1}_B(\theta)=1$
if $\theta\in B$ and $\mathbbm{1}_B(\theta)=0$ if $\theta\not\in B$.  Then,
for any $X\in\mathrm{dom}\,\cR$ and $t\in\real$, we have that
\[
  \begin{aligned}
  \cR^*(\eta) &\ge \sup_{t\in[0,+\infty)}\;\{\langle \eta, X-t\mathbbm{1}_B\rangle_{\mset^*,\mset}
    - \cR(X-t\mathbbm{1}_B)\} \\
  &\ge \sup_{t\in[0,+\infty)}\;\{\langle \eta, X\rangle_{\mset^*,\mset} - \cR(X)
             - t\langle \eta,\mathbbm{1}_B\rangle_{\mset^*,\mset}\} = +\infty.
  \end{aligned}
\]
Axiom (R3) ensures that $\eta\in\fA$ satisfies
$\langle\eta,1\rangle_{\mset^*,\mset}=1$.  To see this, we have that for all
$X\in\mathrm{dom}\,\cR$ and $\eta\in\fA$,
\[
  \begin{aligned}
  \cR^*(\eta) &\ge \sup_{t\in\real}\,\{\langle\eta,X+t\rangle_{\mset^*,\mset} - \cR(X+t)\}\\
              &  = \sup_{t\in\real}\,\{\langle\eta,X\rangle_{\mset^*,\mset} - \cR(X) + t(\langle\eta,1\rangle_{\mset^*,\mset}-1)\}.
  \end{aligned}
\]
Consequently, $\cR^*(\eta) = +\infty$ unless
$\langle\eta,1\rangle_{\mset^*,\mset}=1$.  Hence, the risk envelope $\fA$ is
a subset of probability measures in $\mset^*$. Finally, owing to (R4), $\cR^*$
is the indicator function of $\fA$.  In general, we define the
{\em risk identifiers} $p$ at $X\in\mset$ as the subgradients of $\cR$ at $X$,
i.e., $p\in\partial\cR(X)\subset\fA$.  As a consequence of the Fenchel-Young
inequality, the risk identifiers of a coherent risk measure $\cR$ are the
maximizers on the right-hand side of \eqref{eq:coherent-risk}.

Returning to the generalized OCE \eqref{eq:risk-reg}, it is straightfoward to
show that $\cR$ is a convex risk measure if $\cV$ is an risk-averse expected
disutility function.  Moreover, it is coherent if and only if $\cV$ is
positively homogeneous. To conclude this discussion, we prove fundamental
relations between risk identifiers and subgradients of the regret function
$\cV$ as well as the Fenchel conjugates $\cR^*$ and $\cV^*$.
\begin{lemma}\label{lem:subdiff-risk}
  Let $\cR$ be defined by \eqref{eq:risk-reg}, where $\cV$ is a risk-averse
  expected disutility function, and let $X\in\mset$ be arbitrary.
  Denote by $\bar{t}\in\real$, a solution to the optimization problem defining
  $\cR(X)$, i.e.,
  %\[
    $\cR(X) = \bar{t} + \cV(X-\bar{t})$.
  %\]
  Then, we have that
  \[
    \partial\cR(X) = \partial\cV(X-\bar{t})\cap\{p\in\mset^*\,\vert\,\langle p, 1\rangle_{\mset^*,\mset}=1\}.
  \]
\end{lemma}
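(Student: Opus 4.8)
The plan is to establish the two inclusions directly from the definition of the subdifferential, using only the variational formula $\cR(X)=\min_{t\in\real}\{t+\cV(X-t)\}$, the attainment of this minimum at $\bar t$, and the translation equivariance (R3) of $\cR$, which is immediate from the formula via the substitution $t\mapsto t-s$ (and is in any case part of the statement, proved earlier, that $\cR$ is a convex risk measure). An equivalent route is to first compute $\cR^*=\cV^*+\delta_H$, where $H=\{p\in\mset^*:\langle p,1\rangle_{\mset^*,\mset}=1\}$, by the change of variables $Z=X-t$ in the supremum defining $\cR^*$: this gives $\cR^*(p)=\sup_{Z,t}\{\langle p,Z\rangle_{\mset^*,\mset}-\cV(Z)+t(\langle p,1\rangle_{\mset^*,\mset}-1)\}$, which is $\cV^*(p)$ when $\langle p,1\rangle_{\mset^*,\mset}=1$ and $+\infty$ otherwise. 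One then invokes the Fenchel--Young equality characterization of the subdifferential together with $\cR(X)=\bar t+\cV(X-\bar t)$ and $\langle p,1\rangle_{\mset^*,\mset}=1$ to reduce $\cR(X)+\cR^*(p)=\langle p,X\rangle_{\mset^*,\mset}$ to $\cV(X-\bar t)+\cV^*(p)=\langle p,X-\bar t\rangle_{\mset^*,\mset}$, i.e.\ $p\in\partial\cV(X-\bar t)$; this could be included as a short remark. Below I describe the direct argument.

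For "$\supseteq$", I would take $p\in\partial\cV(X-\bar t)$ with $\langle p,1\rangle_{\mset^*,\mset}=1$, fix an arbitrary $X'\in\mset$ and an arbitrary $t\in\real$, and apply the subgradient inequality for $\cV$ at $X-\bar t$ tested against $X'-t$. Adding $t$ to both sides, the terms linear in $t$ cancel precisely because $\langle p,1\rangle_{\mset^*,\mset}=1$, leaving $t+\cV(X'-t)\ge \bar t+\cV(X-\bar t)+\langle p,X'-X\rangle_{\mset^*,\mset}=\cR(X)+\langle p,X'-X\rangle_{\mset^*,\mset}$. Taking the infimum over $t$ on the left yields $\cR(X')\ge\cR(X)+\langle p,X'-X\rangle_{\mset^*,\mset}$, i.e.\ $p\in\partial\cR(X)$.

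For "$\subseteq$", I would take $p\in\partial\cR(X)$ and first show $\langle p,1\rangle_{\mset^*,\mset}=1$ by substituting $X'=X+t$ into the subgradient inequality for $\cR$ and using (R3) to obtain $t\ge t\langle p,1\rangle_{\mset^*,\mset}$ for every $t\in\real$, which forces equality. Then, for arbitrary $Z\in\mset$, I would substitute $X'=Z+\bar t$ into the subgradient inequality for $\cR$, bound $\cR(Z+\bar t)\le\bar t+\cV(Z)$ from above using the variational formula (take $t=\bar t$ inside the infimum), replace $\cR(X)$ by $\bar t+\cV(X-\bar t)$, and rewrite the duality pairing using $\langle p,1\rangle_{\mset^*,\mset}=1$; the $\bar t$-terms cancel and one is left with $\cV(Z)-\cV(X-\bar t)\ge\langle p,Z-(X-\bar t)\rangle_{\mset^*,\mset}$ for all $Z\in\mset$, i.e.\ $p\in\partial\cV(X-\bar t)$. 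Combining the two inclusions gives the claimed identity.

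I do not anticipate a serious obstacle: the argument is elementary once the variational formula and translation equivariance are available. The only points needing care are (i) the bookkeeping with the translation terms, where $\langle p,1\rangle_{\mset^*,\mset}=1$ must be invoked at exactly the right moment for the $t$- and $\bar t$-dependent terms to cancel, and (ii) noting that the statement tacitly presupposes $X\in\mathrm{dom}\,\cR$, so that a minimizer $\bar t$ exists and $X-\bar t\in\mathrm{dom}\,\cV$; when $\partial\cR(X)=\emptyset$ both sides are empty and there is nothing to prove, since the "$\supseteq$" step already shows the right-hand side is contained in $\partial\cR(X)$.
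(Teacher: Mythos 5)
Your argument is correct and is essentially the paper's own proof: both inclusions are verified directly from the subgradient inequalities for $\cR$ and $\cV$, using the variational formula with the minimizer $\bar t$ and the normalization $\langle p,1\rangle_{\mset^*,\mset}=1$ coming from translation equivariance, with the same cancellation of the $t$- and $\bar t$-terms and the same passage to the infimum over $t$. Your extra remarks (the conjugate route via $\cR^*=\cV^*+\delta_H$, and the explicit verification that $\langle p,1\rangle_{\mset^*,\mset}=1$) only make explicit what the paper states tersely or proves in the adjacent lemma.
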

\begin{proof}
  Let $p\in\partial\cR(X)$, then $\langle p,1\rangle_{\mset^*,\mset}=1$ by (R3)
  and for all $X'\in\mset$ and $t\in\real$, we have that
  \[
    \{t + \cV(X'-t)\} - \{\bar{t}+\cV(X-\bar{t})\}
      \ge \cR(X') - \cR(X) \ge \langle p, (X'-X)\rangle_{\mset^*,\mset}.
  \]
  Consequently, subtracting $(t-\bar{t})$ from both sides yields
  \[
    \cV(X'-t) - \cV(X-\bar{t}) \ge \langle p, (X'-t)-(X-\bar{t})\rangle_{\mset^*,\mset}
    \quad\iff\quad p\in\partial\cV(X-\bar{t}).
  \]
  On the other hand, let $p\in\partial\cV(X-\bar{t})$ satisfy
  $\langle p,1\rangle_{\mset^*,\mset}=1$.  By definition, we have that
  \[
    \cV(X'-t) - \cV(X-\bar{t}) \ge \langle p, (X'-t)-(X-\bar{t})\rangle_{\mset^*,\mset}
      = \langle p,(X'-X)\rangle_{\mset^*,\mset} + (\bar{t}-t)
  \]
  for all $X'\in\mset$ and all $t\in\real$. Subtracting $(\bar{t}-t)$ from both
  sides yields
  \[
    t + \cV(X'-t) - \cR(X) \ge \langle p,(X'-X)\rangle_{\mset^*,\mset}
  \]
  and passing to the infimum over $t\in\real$ on the left-hand side proves the
  desired result.
\end{proof}

\begin{lemma}
  Let $\cR$ be defined by \eqref{eq:risk-reg}, where $\cV$ is a risk-averse
  expected disutility function.  Then, $\cR^*$ is given by
  \[
    \cR^*(\eta) = \left\{\begin{array}{ll}
      \cV^*(\eta)=\Phi(\eta) & \text{if $\langle \eta, 1\rangle_{\mset^*,\mset}=1$} \\
      +\infty & \text{otherwise}
    \end{array}\right. .
  \]
\end{lemma}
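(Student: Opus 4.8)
The plan is to compute $\cR^*$ directly from its definition, exploiting the fact that conjugating \eqref{eq:risk-reg} (an infimal convolution of $\cV$ against the linear function $t\mapsto t$) turns the inner minimization into a second supremum that decouples after a translation. Concretely, I would start from
\[
  \cR^*(\eta) = \sup_{X\in\mset}\Bigl\{\langle\eta,X\rangle_{\mset^*,\mset} - \min_{t\in\real}\{t+\cV(X-t)\}\Bigr\}
  = \sup_{X\in\mset}\sup_{t\in\real}\Bigl\{\langle\eta,X\rangle_{\mset^*,\mset} - t - \cV(X-t)\Bigr\},
\]
where the second equality uses $-\min_{t}f(t)=\sup_{t}(-f(t))$. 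This manipulation is legitimate because (V2) gives $\cV(X-t)\ge\bbe[X-t]$, hence $t+\cV(X-t)\ge\bbe[X]>-\infty$, so $\cR$ never takes the value $-\infty$ on $\mset$ and no indeterminate $\infty-\infty$ arises; note also that the argument goes through with $\inf$ in place of $\min$, so the attainment asserted in Lemma~\ref{lem:subdiff-risk} is not needed here.

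Next I would substitute $Y\eqdef X-t$ (for each fixed $t$ a bijection of $\mset$), so that $\langle\eta,X\rangle_{\mset^*,\mset}=\langle\eta,Y\rangle_{\mset^*,\mset}+t\langle\eta,1\rangle_{\mset^*,\mset}$ and the double supremum separates into independent pieces:
\[
  \cR^*(\eta) = \sup_{Y\in\mset}\bigl\{\langle\eta,Y\rangle_{\mset^*,\mset} - \cV(Y)\bigr\}
    + \sup_{t\in\real}\bigl\{t\bigl(\langle\eta,1\rangle_{\mset^*,\mset}-1\bigr)\bigr\}.
\]
The first term is by definition $\cV^*(\eta)$, which equals $\Phi(\eta)$ by the definition preceding Proposition~\ref{prop:Phi-conj}. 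The second term is the support function of $\real$ evaluated at $\langle\eta,1\rangle_{\mset^*,\mset}-1$: it equals $0$ if $\langle\eta,1\rangle_{\mset^*,\mset}=1$ and $+\infty$ otherwise. Since $\cV^*(\eta)=\Phi(\eta)$ is never $-\infty$ (Property~1 of Proposition~\ref{prop:Phi-conj}), the sum is unambiguous: it is $\Phi(\eta)$ on the hyperplane $\langle\eta,1\rangle_{\mset^*,\mset}=1$ and $+\infty$ off it, which is exactly the asserted formula.

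There is essentially no obstacle beyond bookkeeping: the two points requiring a moment's care are verifying $\cR>-\infty$ so that $-\min=\sup(-\cdot)$ is valid, and verifying $\cV^*$ is never $-\infty$ so that the split sum is well defined — both immediate from (V2) and Property~1 of Proposition~\ref{prop:Phi-conj}. (As a consistency check against \eqref{eq:coherent-risk}: when $\cV$ is positively homogeneous, Property~5 of Proposition~\ref{prop:Phi-conj} forces $\mathrm{dom}\,\Phi=\{1\}$, so $\cR^*$ reduces to the indicator of a singleton set of probability measures, as required of a coherent risk measure.)
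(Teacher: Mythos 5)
Your proof is correct and follows essentially the same route as the paper's: both compute $\cR^*$ directly by turning the inner minimization over $t$ into a second supremum, substituting $Y=X-t$ so the double supremum decouples into $\cV^*(\eta)$ plus $\sup_{t\in\real} t(\langle\eta,1\rangle_{\mset^*,\mset}-1)$, and reading off the two cases. Your added checks that $\cR>-\infty$ and $\cV^*>-\infty$ are sound refinements of the same argument, not a different approach.
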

\begin{proof}
  Using \eqref{eq:risk-reg}, we arrive at the following equality
  \[
    \begin{aligned}
    \cR^*(\eta) &= \sup_{X\in\mset,\,t\in\real}\;\{\langle \eta, X\rangle_{\mset^*,\mset} - \cR(X-t)\}\\
                &= \sup_{Y\in\mset,\,t\in\real}\;\{\langle \eta, Y\rangle_{\mset^*,\mset}
                     - \cV(Y) + t(\langle \eta, 1\rangle_{\mset^*,\mset}-1)\} \\
                &= \cV^*(\eta) + \sup_{t\in\real}\; t(\langle \eta, 1\rangle_{\mset^*,\mset}-1).
    \end{aligned}
  \]
  The desired result then follows since the supremum in the final equality is
  equal to $+\infty$ unless $\langle\eta,1\rangle_{\mset^*,\mset}=1$.
\end{proof}

\section{Generalized Variational Inference as Risk Identification}\label{sect:genphi}

In this section, we analyze the GVI problem
\begin{equation}\label{eq:gvi0}
  \sup_{p\in\fD}\; \{\bbe[p M_N] - \sigma\Phi(p)\},
\end{equation}
where $\Phi$ is a generalized $\phi$-divergence as described in
Proposition~\ref{prop:Phi-conj}.  In particular, our analysis provides
necessary and sufficient conditions for the existence of solutions to
\eqref{eq:gvi0}, which we refer to as {\em variational densities}, and
transforms the task of solving the optimization problem \eqref{eq:gvi0} into
computing subgradients of a risk measure---a task that is generally simpler
than optimization over probability density functions.  We then introduce a new
GVI problem, which we call coherent GVI (CGVI).  CGVI is closely linked to
coherent risk measures and again can be transformed into the problem of
computing a risk identifer. 

\subsection{Existence and Asymptotics of Variational Densities}

In this subsection, we provide necesary and sufficient conditions for the
existence of solutions to the GVI problem \eqref{eq:gvi0}. In doing so, we
relate the optimization problem \eqref{eq:gvi0} to evaluating a generalized OCE
and as a result of the Fenchel-Young inequality, show that the variational
densities in \eqref{eq:gvi0} are subgradients of that OCE.  The following
result provides this characterization.

\begin{theorem}[Existence of Variational Densities]\label{th:gvi-existence}
  Let the generalized $\phi$-divergence $\Phi$ be defined using the risk-averse
  expected disutility function $\cV$ as in Proposition~\ref{prop:Phi-conj}, and
  let $\cR$ denote the risk measure projected from $\cV$ as in
  \eqref{eq:risk-reg}. Then, the GVI problem \eqref{eq:gvi0} has a solution if
  and only if
  \[
    \partial\cR(\sigma^{-1} M_N)\cap\dset \neq \emptyset
  \]
  and the variational density is a risk identifier of $\sigma^{-1}M_N$, i.e.,
  \[
    \bar{p}\in \partial\cR(\sigma^{-1} M_N)\cap\dset.
  \]
\end{theorem}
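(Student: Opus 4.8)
The plan is to divide out the learning rate, recognize the feasibility-plus-divergence term as the conjugate $\cR^*$ of the generalized OCE $\cR$, and then invoke the Fenchel--Young equality to read off the optimizers. Writing $Y\eqdef\sigma^{-1}M_N$, the first step is the observation
\[
  \sup_{p\in\fD}\{\bbe[pM_N]-\sigma\Phi(p)\}
  = \sigma\sup_{p\in\dset}\big\{\langle p,Y\rangle - \big(\Phi(p)+\delta_{\fD}(p)\big)\big\},
\]
which is legitimate because (by the standing convention attached to $D_\Phi$) $\Phi$ is $+\infty$ off $\dset$, so the constraint $p\in\fD$ may be replaced by $p\in\fD\cap\dset$ and then absorbed into an indicator over $\dset$. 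The crucial identification is that $\Phi+\delta_{\fD}$ coincides with $\cR^*$ on $\dset$: the lemma at the end of Section~\ref{ss:risk} gives $\cR^*(p)=\Phi(p)$ when $\langle p,1\rangle=1$ and $\cR^*(p)=+\infty$ otherwise, so $\cR^*=\Phi+\delta_{\{\langle\cdot,1\rangle=1\}}$ on $\dset$; moreover Property~3 of Proposition~\ref{prop:Phi-conj} forces $p\ge 0$ a.s.\ on $\mathrm{dom}\,\Phi$, so appending the nonnegativity constraint that (together with $\langle p,1\rangle=1$) makes up $\fD$ does not alter $\cR^*$ on its domain and leaves it $+\infty$ elsewhere. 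Hence the GVI value equals $\sigma\sup_{p\in\dset}\{\langle p,Y\rangle-\cR^*(p)\}=\sigma(\cR^*)^*(Y)$.

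Next I would identify $(\cR^*)^*$ with $\cR$ itself. Axioms (R1)--(R3) make $\cR$ proper, convex and $\sigma(\mset,\dset)$-lower semicontinuous, so the Fenchel--Moreau theorem for the dual pair $(\mset,\dset)$ yields $\cR=\cR^{**}$; consequently the GVI value is $\sigma\cR(\sigma^{-1}M_N)$, the generalized OCE evaluated at the scaled payoff, which is finite whenever $\sigma^{-1}M_N\in\mathrm{dom}\,\cR$ and is always $\ge\bbe[M_N]>-\infty$ since $p\equiv 1\in\fD$ with $\Phi(1)=0$. Finally, by the Fenchel--Young equality \eqref{eq:fy-ineq} applied to the pair $(\cR^*,\cR^{**})$ at $(\bar p,\sigma^{-1}M_N)$, the supremum $\sup_{p\in\dset}\{\langle p,\sigma^{-1}M_N\rangle-\cR^*(p)\}$ is attained at $\bar p$ if and only if $\bar p\in\dset$ and $\bar p\in\partial\cR^{**}(\sigma^{-1}M_N)=\partial\cR(\sigma^{-1}M_N)$. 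Since the original GVI objective and this reduced one differ only by the positive factor $\sigma$, their maximizers coincide, so \eqref{eq:gvi0} has a solution if and only if $\partial\cR(\sigma^{-1}M_N)\cap\dset\ne\emptyset$, and every solution is a risk identifier $\bar p\in\partial\cR(\sigma^{-1}M_N)\cap\dset$.

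I expect the main obstacle to be the case $\mset=\dset^*$ with $\mset=L^\infty$, $\dset=L^1$: there the subdifferential $\partial\cR(\sigma^{-1}M_N)$ naturally lives in $\mset^*=(L^\infty)^*$, strictly larger than $\dset$, so one must argue that intersecting with $\dset=L^1$ recovers exactly those subgradients that correspond to countably additive measures, i.e.\ the densities over which \eqref{eq:gvi0} is posed — equivalently, that the subdifferential computed with respect to the pairing $\langle\cdot,\cdot\rangle=\bbe[\cdot\,\cdot]$ equals $\partial\cR(\sigma^{-1}M_N)\cap\dset$. A secondary point requiring care is the bookkeeping in the identity $\Phi+\delta_\fD=\cR^*$ on $\dset$: one should treat the regions $\{\langle p,1\rangle\ne 1\}$, $\{\Phi(p)=+\infty\}$ and $\{p\ge 0\}$ separately so that the equality is verified on all of $\dset$, not only on $\mathrm{dom}\,\cR^*$. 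Everything else — Fenchel--Moreau for $\cR$, factoring out $\sigma$, and the Fenchel--Young characterization of the maximizer — is routine once the problem is in this conjugate form.
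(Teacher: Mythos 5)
Your proof is correct, but it reaches the value $\sigma\cR(\sigma^{-1}M_N)$ by a genuinely different route than the paper. The paper views the optimal value of \eqref{eq:gvi0} as the Fenchel conjugate of $\sigma\Phi+\delta_{\fD}$ evaluated at $M_N$, rewrites it as the infimal convolution of $(\sigma\Phi)^*=\sigma\cV(\sigma^{-1}\cdot)$ and $\delta_{\fD}^*=\esssup$, and then uses monotonicity (V3) to collapse the dual variable $Y\in\mset$ to a scalar, which produces the optimized-certainty-equivalent form \eqref{eq:risk-reg} directly; Fenchel--Young then identifies the maximizers with $\partial\cR_\sigma(M_N)=\partial\cR(\sigma^{-1}M_N)$. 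You instead absorb the feasible set by noting (via Property~3 of Proposition~\ref{prop:Phi-conj}) that nonnegativity is already encoded in $\mathrm{dom}\,\Phi$, identify $\Phi+\delta_{\fD}$ with $\cR^*$ through the unlabeled lemma at the end of Section~\ref{ss:risk}, and invoke Fenchel--Moreau to recognize the GVI value as $\sigma\cR^{**}(\sigma^{-1}M_N)=\sigma\cR(\sigma^{-1}M_N)$, finishing with the same Fenchel--Young step. What your route buys: it bypasses the sum-rule/infimal-convolution identity (which, as written in the paper, implicitly needs a constraint qualification or closedness argument) and the reduction of $Y$ to a degenerate random variable. What it costs: it leans directly on the properness and $\sigma(\mset,\dset)$-lower semicontinuity of $\cR$ itself (axiom (R1)), which the paper only asserts is straightforward for OCE-type risk measures, whereas the paper's explicit scalarization effectively derives the representation it needs rather than assuming it. Your closing caveat about the $\mset=\dset^*$ (i.e.\ $L^\infty$/$L^1$) pairing is exactly the point recorded in Remark~\ref{rem:nonexist}, and your case-by-case treatment of $\{\langle p,1\rangle\neq 1\}$, $\{\Phi(p)=+\infty\}$ and $\{p\ge 0\}$ is the right way to make the identity $\Phi+\delta_{\fD}=\cR^*$ airtight on all of $\dset$.
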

\begin{proof}
  We first notice that \eqref{eq:gvi0} is the Fenchel conjugate of
  $\sigma\Phi+\delta_{\fD}$. Consequently, the optimal value in
  \eqref{eq:gvi0} is the infimal convolution of the Fenchel conjugates
  $(\sigma\Phi)^*$ and $\delta_{\fD}^*$.  That is, the optimal value in
  \eqref{eq:gvi0} is given by \begin{equation}\label{eq:gvi1}
    \inf_{Y\in\mset}\; \{(\sigma\Phi)^*(M_N-Y) + \delta_{\fD}^*(Y)\}.
  \end{equation}
  Using the definition of $\Phi$, we have that
  $(\sigma\Phi)^*(Y)=\sigma\cV(\sigma^{-1}Y)\eqdef\cV_\sigma(Y)$.  Moreover, the
  Fenchel conjugate of $\delta_{\fD}$ is the support function of $\fD$, i.e.,
  \[
    \delta_{\fD}^*(Y) = \sigma_{\fD}(Y) = \sup_{p\in\fD} \bbe[p Y] = \esssup Y,
  \]
  where the last equality follows since $\fD$ consists of all probability
  density functions.  Additionally, since $\cV$ is monotonic and $Y\le\esssup Y$
  for all $Y\in\mset$, we have that
  \[
    \cV_\sigma(M_N-Y) \ge \cV_\sigma(M_N-\esssup Y)
    \quad\forall\, Y\in\mset
  \]
  and therefore, the random variable $Y$ in \eqref{eq:gvi1} can be replaced by a
  degenerate random variable (i.e., a scalar).  The optimal value in
  \eqref{eq:gvi1} (and hence \eqref{eq:gvi0}) can thus be equivalently written as
  \begin{equation}\label{eq:gvi2}
    \cR_\sigma(M_N) \eqdef \min_{t\in\real}\; \{ t + \cV_\sigma(M_N-t) \} = \sigma\cR(\sigma^{-1}(M_N-t)),
  \end{equation}
  where $\cR$ is the risk measure defined in \eqref{eq:risk-reg}. As a result
  of the Fenchel-Young inequality, we have that
  \[
    \cR_\sigma(M_N) = \langle p, M_N\rangle_{\mset^*,\mset} - \cR_\sigma^*(p)
      \quad\forall\, p\in\partial\cR_\sigma(M_N).
  \]
  Since $\cR_\sigma(\cdot)=\sigma\cR(\sigma^{-1}\cdot)$, we have
  that $\partial\cR_\sigma(M_N)=\partial\cR(\sigma^{-1}M_N)$ as desired.
\end{proof}
\begin{remark}[Nonexistence]\label{rem:nonexist}
  In general, the estimation problem \eqref{eq:gvi0} need not have a solution
  in $\dset$ since $\partial\cR(\sigma^{-1}M_N)\subseteq\mset^*$.  However,
  Theorem~\ref{th:gen-exist} guarantees that solutions exist when
  $\dset=\mset^*$ and $\partial\cR(\sigma^{-1} M_N)\neq\emptyset$.
\end{remark}

The following two corollaries to Theorem~\ref{th:gvi-existence} provide
sufficient conditions for existence of variational densities to
\eqref{eq:gvi0} when $\dset=\mset^*$ as discussed in Remark~\ref{rem:nonexist}.
\begin{corollary}\label{cor:gvi-existence1}
  In the setting of Theorem~\ref{th:gvi-existence}, if $\dset=\mset^*$ and
  $\cR$ is finite valued and continuous at $\sigma^{-1}M_N$, then
  \eqref{eq:gvi0} has a solution.
\end{corollary}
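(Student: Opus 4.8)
The plan is to deduce the corollary directly from Theorem~\ref{th:gvi-existence}, which asserts that \eqref{eq:gvi0} is solvable if and only if $\partial\cR(\sigma^{-1}M_N)\cap\dset\neq\emptyset$, in which case any element of that set is a variational density. The first observation is that, since the subdifferential of a function on the Banach space $\mset$ is by construction a subset of its dual $\mset^*$, and we are in the case $\dset=\mset^*$, intersecting with $\dset$ is automatic: $\partial\cR(\sigma^{-1}M_N)\cap\dset=\partial\cR(\sigma^{-1}M_N)$. Hence it suffices to show that this subdifferential is nonempty. This is exactly the point isolated in Remark~\ref{rem:nonexist}: without $\dset=\mset^*$ one could have $\partial\cR(\sigma^{-1}M_N)\subseteq\mset^*$ yet disjoint from $\dset$, whereas here no further integrability of the candidate density has to be verified.

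Second, I would invoke the classical fact that a proper convex function on a Banach space that is finite and norm-continuous at a point is subdifferentiable there. By axiom (R1), the risk measure $\cR$ projected from $\cV$ as in \eqref{eq:risk-reg} is proper and convex (it is even lower semicontinuous, though that is not needed). Finiteness of $\cR(\sigma^{-1}M_N)$ together with continuity at $\sigma^{-1}M_N$ means $\cR$ is bounded above on a neighborhood of $\sigma^{-1}M_N$, so $\mathrm{epi}\,\cR\subset\mset\times\real$ has nonempty interior while the point $(\sigma^{-1}M_N,\cR(\sigma^{-1}M_N))$ lies on its boundary. The Hahn--Banach separation theorem then produces a closed hyperplane supporting $\mathrm{epi}\,\cR$ at this point; since the epigraph contains interior points lying strictly above it, the supporting functional is non-vertical, and after normalizing its last coordinate it yields $\bar p\in\mset^*$ with $\cR(X)\ge\cR(\sigma^{-1}M_N)+\langle\bar p,X-\sigma^{-1}M_N\rangle_{\mset^*,\mset}$ for all $X\in\mset$, i.e.\ $\bar p\in\partial\cR(\sigma^{-1}M_N)$.

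Combining the two steps, $\partial\cR(\sigma^{-1}M_N)=\partial\cR(\sigma^{-1}M_N)\cap\dset\neq\emptyset$, so Theorem~\ref{th:gvi-existence} applies and \eqref{eq:gvi0} has a solution, namely any risk identifier of $\sigma^{-1}M_N$. I do not expect a genuine obstacle here; the only point deserving a line of care is the reading of ``continuous'': norm continuity is the natural and weakest sufficient hypothesis, and since $\cR$ is convex, continuity in any coarser topology such as $\sigma(\mset,\dset)$ would imply it. If one preferred not to reproduce the epigraph-separation argument, one could instead cite a standard reference for ``a convex function continuous at a point is subdifferentiable there'' (e.g.\ \citet{RTRockafellar_1971a} or a standard convex analysis text).
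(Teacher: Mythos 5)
Your proposal is correct and follows essentially the same route as the paper: since $\dset=\mset^*$ the intersection with $\dset$ is automatic, and nonemptiness of $\partial\cR(\sigma^{-1}M_N)$ follows from the standard fact that a proper convex function finite and continuous at a point is subdifferentiable there, after which Theorem~\ref{th:gvi-existence} applies. The only difference is cosmetic: you reprove that subdifferentiability fact via epigraph separation, whereas the paper simply cites \citep[Prop.~5.2]{IEkeland_RTemam_1999}.
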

\begin{proof}
  Under the stated assumptions,
  $\partial\cR(\sigma^{-1}M_N)=\partial\cR(\sigma^{-1}M_N)\cap\dset\neq\emptyset$
  \cite[Prop.~5.2]{IEkeland_RTemam_1999} and the result of
  Theorem~\ref{th:gvi-existence} ensures the existence of solutions.
\end{proof}
\begin{corollary}\label{cor:gvi-existence2}
  In the setting of Theorem~\ref{th:gvi-existence}, if $\dset=\mset^*$ and
  $\cV$ is finite valued, then \eqref{eq:gvi0} has a solution.
\end{corollary}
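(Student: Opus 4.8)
The plan is to reduce the statement to Corollary~\ref{cor:gvi-existence1}. Concretely, I will show that when $\cV$ is finite valued, the projected risk measure $\cR$ from \eqref{eq:risk-reg} is itself finite valued and continuous on all of $\mset$, in particular at $\sigma^{-1}M_N$; since the hypothesis $\dset=\mset^*$ is also in force, Corollary~\ref{cor:gvi-existence1} then applies verbatim and yields a solution to \eqref{eq:gvi0}.

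First I would check that $\cR$ is finite valued. For a lower bound, risk aversion (V2) gives $\cV(X-t)\ge\bbe[X-t]=\bbe[X]-t$ for every $t\in\real$, hence $t+\cV(X-t)\ge\bbe[X]$ and therefore $\cR(X)\ge\bbe[X]>-\infty$ for all $X\in\mset$ (note $\bbe[|X|]<\infty$ since $1\in\dset$). For an upper bound, choosing $t=0$ in \eqref{eq:risk-reg} gives $\cR(X)\le\cV(X)<\infty$, because $\cV$ is finite valued by assumption. Hence $\cR:\mset\to\real$.

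Next I would establish continuity of $\cR$. Since $\cV$ is convex, $\sigma(\mset,\dset)$-lower semicontinuous by (V1) (equivalently norm-lower semicontinuous, the two notions coinciding for convex functions), and finite valued on the Banach space $\mset$, the classical Baire-category argument shows that $\cV$ is continuous on $\mset$, in particular locally bounded above; see, e.g., \cite[Ch.~I]{IEkeland_RTemam_1999}. Because $\cR$ is a convex risk measure---being the generalized OCE \eqref{eq:risk-reg} of the risk-averse expected disutility $\cV$---and satisfies $\cR\le\cV$, it is majorized on a neighborhood of every point of $\mset$ by the locally bounded $\cV$; a proper convex function that is bounded above on a neighborhood of a point, and finite there, is continuous there. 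Thus $\cR$ is finite valued and continuous at $\sigma^{-1}M_N$, and Corollary~\ref{cor:gvi-existence1} delivers the conclusion. (Equivalently, continuity of $\cR$ makes $\partial\cR(\sigma^{-1}M_N)$ nonempty, and since $\dset=\mset^*$ this subdifferential already lies in $\dset$, so Theorem~\ref{th:gvi-existence} applies directly.)

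The only point needing care is the passage from ``$\cV$ finite valued, convex, and $\sigma(\mset,\dset)$-lsc'' to ``$\cV$ norm-continuous'': one should note that, by Hahn--Banach, a convex set is $\sigma(\mset,\dset)$-closed if and only if it is norm-closed, so the convex function $\cV$ is $\sigma(\mset,\dset)$-lsc if and only if it is norm-lsc, after which the standard theorem on finite-valued lsc convex functions on Banach spaces applies. Everything else is routine; in particular I do not need the infimum over $t$ in \eqref{eq:risk-reg} to be attained, only that $\partial\cR(\sigma^{-1}M_N)\neq\emptyset$, which continuity of $\cR$ guarantees.
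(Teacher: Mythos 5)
Your proposal is correct and follows essentially the same route as the paper: bound $\cR$ by $\cR(Y)\le\cV(Y)<\infty$ (with the lower bound from risk aversion) to get finiteness, invoke the standard result that a finite-valued, lower semicontinuous convex function on a Banach space is continuous, and then conclude via Corollary~\ref{cor:gvi-existence1}. The only cosmetic difference is that the paper applies the continuity theorem (citing Ekeland--Temam) directly to $\cR$, whereas you first establish continuity and local boundedness of $\cV$ and transfer it to $\cR$ via the majorization $\cR\le\cV$; both are valid and equivalent in substance.
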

\begin{proof}
  Since $\cV$ is finite valued, $\cR$ is also finite valued since
  $\cR(Y)\le\cV(Y)$ for all $Y\in\mset$.  Therefore, $\cR$ is continuous on
  $\mset$ \cite[Cor.~2.5]{IEkeland_RTemam_1999}.  The corollary then follows
  from Corollary~\ref{cor:gvi-existence1}. 
\end{proof}

Theorem~\ref{th:gvi-existence} transforms the optimization problem
\eqref{eq:gvi0} into a subdifferentiation problem involving the risk measure
$\cR$ projected from the risk-averse expected disutility function $\cV$.
Giving the connection between $\cR$ and $\cV$, it is possible to rewrite the
condition $p\in\partial\cR(\sigma^{-1}M_N)$ in terms of $\cV$.
\begin{proposition}
  Consider the setting of Theorem~\ref{th:gvi-existence} and let $\bar{t}$ be
  a minimizer in the definition of $\cR_\sigma(M_N)$, i.e.,
  \begin{equation}\label{eq:opt-t}
    \cR_\sigma(M_N) = \min_{t\in\real}\;\{t + \sigma\cV(\sigma^{-1}(M_N-t))\}.
  \end{equation}
  Then, any solution to \eqref{eq:gvi0} satisfies
  \[
    \bar{p}\in \partial\cV(\sigma^{-1}(M_N-\bar{t}))\cap\{p\in\dset\,\vert\,\bbe[p]=1\}.
  \]
\end{proposition}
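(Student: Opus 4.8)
The plan is to read off the result by combining Theorem~\ref{th:gvi-existence} with Lemma~\ref{lem:subdiff-risk}, once the scaling by $\sigma$ has been accounted for. Recall from the proof of Theorem~\ref{th:gvi-existence} that the optimal value of \eqref{eq:gvi0} equals $\cR_\sigma(M_N)$ as in \eqref{eq:opt-t}, and that any solution $\bar p$ of \eqref{eq:gvi0} satisfies $\bar p\in\partial\cR_\sigma(M_N)\cap\dset$ with $\partial\cR_\sigma(M_N)=\partial\cR(\sigma^{-1}M_N)$. It therefore suffices to re-express $\partial\cR(\sigma^{-1}M_N)$ in terms of the regret functional $\cV$.

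First I would perform the change of variables $t=\sigma s$ in \eqref{eq:opt-t}, which gives $\cR_\sigma(M_N)=\sigma\,\cR(\sigma^{-1}M_N)$ with $\cR$ as in \eqref{eq:risk-reg}, and which shows that $\bar t$ is a minimizer in \eqref{eq:opt-t} if and only if $\bar s\eqdef\sigma^{-1}\bar t$ is a minimizer of $s\mapsto s+\cV(\sigma^{-1}M_N-s)$; equivalently
\[
  \cR(\sigma^{-1}M_N) = \bar s + \cV\!\big(\sigma^{-1}M_N-\bar s\big)
    = \sigma^{-1}\bar t + \cV\!\big(\sigma^{-1}(M_N-\bar t)\big).
\]
Since $\cV$ is a risk-averse expected disutility function and $\sigma^{-1}M_N\in\mset$, Lemma~\ref{lem:subdiff-risk} applies with the random variable $\sigma^{-1}M_N$ and the minimizer $\bar s$, yielding
\[
  \partial\cR(\sigma^{-1}M_N)
    = \partial\cV\!\big(\sigma^{-1}(M_N-\bar t)\big)
      \cap\{p\in\mset^*\,\vert\,\langle p,1\rangle_{\mset^*,\mset}=1\}.
\]

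Finally I would intersect with $\dset$ and use that $\langle p,1\rangle_{\mset^*,\mset}=\bbe[p]$ for $p\in\dset$, so that Theorem~\ref{th:gvi-existence} gives
\[
  \bar p\in\partial\cR(\sigma^{-1}M_N)\cap\dset
    = \partial\cV\!\big(\sigma^{-1}(M_N-\bar t)\big)\cap\{p\in\dset\,\vert\,\bbe[p]=1\},
\]
which is the assertion. An equivalent route is to note that $\cV_\sigma(X)\eqdef\sigma\cV(\sigma^{-1}X)$ is again a risk-averse expected disutility function and to apply Lemma~\ref{lem:subdiff-risk} directly to the generalized OCE $\cR_\sigma$, using the scaling identity $\partial\cV_\sigma(Y)=\partial\cV(\sigma^{-1}Y)$. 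Either way, the only point requiring care — and the nearest thing to an obstacle in an otherwise routine argument — is the $\sigma$-bookkeeping: correctly transporting the minimizer $\bar t$ of the $t$-problem \eqref{eq:opt-t} to a minimizer of the unscaled $s$-problem so that Lemma~\ref{lem:subdiff-risk} applies verbatim, together with checking that its hypotheses hold (existence of the minimizer $\bar t$ is assumed in the statement, and $\cV$ is risk-averse by standing assumption).
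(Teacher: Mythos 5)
Your proposal is correct and follows essentially the same route as the paper, which simply invokes Theorem~\ref{th:gvi-existence} together with Lemma~\ref{lem:subdiff-risk} applied to $\cR_\sigma$; your explicit $t=\sigma s$ change of variables (and the alternative via $\cV_\sigma$ and $\partial\cV_\sigma(Y)=\partial\cV(\sigma^{-1}Y)$) just spells out the $\sigma$-bookkeeping that the paper leaves implicit.
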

\begin{proof}
  This result follows directly from Theorem~\ref{th:gvi-existence} and
  Lemma~\ref{lem:subdiff-risk} applied to $\cR_\sigma$.
\end{proof}

The final result in this section concerns the large data limit (i.e., as
$N\to+\infty$) and presents a type of law of large numbers result for the
variational densities $p_N$ associated with the payoff function $M_N$.

\begin{theorem}[Asymptotic Consistency]\label{th:consistency-gvi}
  Consider the setting of Theorem~\ref{th:gvi-existence}.  Let
  $\{\sigma_N\}\subset(0,\infty]$ with $\sigma_N \ge \sigma_0 > 0$ for all $N$
  and $\{M_N\}$ satisfy $M_N\in\mset$ a.s. Further, suppose that there
  exists $p_N\in\partial\cR(\sigma_N^{-1}M_N)\cap\dset$ a.s.\ for all $N$ and
  that $\sigma_N^{-1}M_N\to M_\star$ in $\mset$ a.s., i.e.,
  \[
    \|\sigma_N^{-1}M_N - M_\star\|_{\mset}\to 0 \quad\text{a.s.}
  \]
  Then, any $\sigma(\dset,\mset)$-accumulation point, $p_\star$, of the variational
  densities $\{p_N\}$ satisfies
  \[
    p_\star\in\partial\cR(M_\star)\cap\dset \quad\text{a.s.}
  \]
  In particular, $p_\star$ is a variational density for the asymptotic GVI
  problem
  \[
    \max_{p\in\fD}\; \{\bbe[p M_\star] - \Phi(p)\}.
  \]
\end{theorem}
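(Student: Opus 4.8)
The plan is to exploit the characterization from Theorem~\ref{th:gvi-existence} that $p_N \in \partial\cR(\sigma_N^{-1}M_N)$ and pass to the limit using the (weak-strong) closedness of the subdifferential graph of the closed convex function $\cR$. First I would record the defining subgradient inequality: for each $N$ and every $X \in \mset$,
\[
  \cR(X) \ge \cR(\sigma_N^{-1}M_N) + \langle p_N, X - \sigma_N^{-1}M_N\rangle_{\dset,\mset}.
\]
Let $p_\star$ be a $\sigma(\dset,\mset)$-accumulation point of $\{p_N\}$, and pass to a subnet $\{p_{N_k}\}$ with $p_{N_k} \rightharpoonup p_\star$. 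Since $\sigma_{N_k}^{-1}M_{N_k} \to M_\star$ strongly in $\mset$, the pairing $\langle p_{N_k}, X - \sigma_{N_k}^{-1}M_{N_k}\rangle \to \langle p_\star, X - M_\star\rangle$ (one uses boundedness of the weakly convergent net, or more carefully, splits the pairing into $\langle p_{N_k}, X - M_\star\rangle$, which converges by weak convergence, plus $\langle p_{N_k}, M_\star - \sigma_{N_k}^{-1}M_{N_k}\rangle$, which I expect to vanish). On the left side, $\cR$ is $\sigma(\mset,\dset)$-lower semicontinuous and $\sigma_{N_k}^{-1}M_{N_k}$ converges strongly, hence weakly, to $M_\star$, so $\liminf_k \cR(\sigma_{N_k}^{-1}M_{N_k}) \ge \cR(M_\star)$; combined with upper semicontinuity from continuity/finiteness of $\cR$ near $M_\star$, one gets $\cR(\sigma_{N_k}^{-1}M_{N_k}) \to \cR(M_\star)$. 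Taking limits in the subgradient inequality then yields $\cR(X) \ge \cR(M_\star) + \langle p_\star, X - M_\star\rangle$ for all $X \in \mset$, i.e.\ $p_\star \in \partial\cR(M_\star)$. Since $\dset$ is $\sigma(\dset,\mset)$-closed (it is a closed convex subset of a Banach space, being defined by the closed affine constraint $\bbe[p]=1$ together with the a.s.\ nonnegativity that is automatic from Proposition~\ref{prop:Phi-conj}(3) for finite $\Phi$) and each $p_N \in \dset$, we also have $p_\star \in \dset$, so $p_\star \in \partial\cR(M_\star)\cap\dset$.

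The last sentence follows by applying Theorem~\ref{th:gvi-existence} with $\sigma=1$ and payoff $M_\star$: that theorem identifies the variational densities of $\max_{p\in\fD}\{\bbe[pM_\star] - \Phi(p)\}$ precisely with $\partial\cR(M_\star)\cap\dset$, which $p_\star$ was just shown to belong to.

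The main obstacle will be making the limit of the duality pairing rigorous without assuming boundedness of $\{p_N\}$ outright. In the paired-space setting, a $\sigma(\dset,\mset)$-accumulation point need not be the limit of a bounded net, so the clean estimate $|\langle p_{N_k}, M_\star - \sigma_{N_k}^{-1}M_{N_k}\rangle| \le \|p_{N_k}\|\,\|M_\star - \sigma_{N_k}^{-1}M_{N_k}\| \to 0$ requires justification. I expect the right move is to appeal to the structure of the problem: the $p_N$ lie in $\mathrm{dom}\,\Phi$, and because $\sigma_N \ge \sigma_0 > 0$, the bound $\Phi(p_N) = \cR^*(p_N)$ stays controlled by $\langle p_N, \sigma_N^{-1}M_N\rangle - \cR(\sigma_N^{-1}M_N)$ via Fenchel--Young, which together with lower bounds on $\Phi$ from Proposition~\ref{prop:Phi-conj}(1) and the uniform bound $\|\sigma_N^{-1}M_N\|_\mset \le C$ (from convergence) gives an a priori bound keeping $\{p_N\}$ inside a level set of $\Phi$ — a set one can hope is $\sigma(\dset,\mset)$-bounded, so that the pairing estimate goes through. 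A secondary, routine point is that the whole argument is stated ``a.s.'', so one fixes an event of full probability on which all the hypotheses ($M_N \in \mset$, existence of $p_N \in \partial\cR(\sigma_N^{-1}M_N)\cap\dset$, the strong convergence $\sigma_N^{-1}M_N \to M_\star$) hold simultaneously, and runs the deterministic argument there.
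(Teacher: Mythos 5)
Your route is, in substance, the paper's route: the paper disposes of this theorem with a one-line appeal to the closed-graph (outer semicontinuity) property of the subdifferential under strong convergence of the arguments and weak$^*$ convergence of the subgradients (Clarke, Prop.~2.1.5(b)), and your argument is exactly the standard hand-proof of that property—subgradient inequality, pass to a subnet, take limits—followed by the same final step (Theorem~\ref{th:gvi-existence} with $\sigma=1$ identifies $\partial\cR(M_\star)\cap\dset$ with the variational densities of the asymptotic problem). Two side remarks can be trimmed: you only need $\liminf_k\cR(\sigma_{N_k}^{-1}M_{N_k})\ge\cR(M_\star)$, which follows from (V1)-type lower semicontinuity since $\sigma(\mset,\dset)$-lsc implies norm-lsc, so the upper-semicontinuity/``continuity near $M_\star$'' claim is both unnecessary and unavailable from the hypotheses; and $p_\star\in\dset$ is automatic because the accumulation point is taken in the topology $\sigma(\dset,\mset)$ on $\dset$, so the discussion of $\bbe[p]=1$ and nonnegativity is beside the point for that membership.

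The genuine gap is the one you flag yourself: the step $\langle p_{N_k},\,M_\star-\sigma_{N_k}^{-1}M_{N_k}\rangle\to 0$ needs a uniform dual-norm bound on the subgradients along the subnet, and nothing in your argument supplies it (a $\sigma(\dset,\mset)$-convergent subnet of a sequence need not be bounded, and Banach--Steinhaus only helps for sequences when $\dset=\mset^*$). Your proposed repair is circular: Fenchel--Young equality at $p_N$ gives $\Phi(p_N)=\langle p_N,\sigma_N^{-1}M_N\rangle-\cR(\sigma_N^{-1}M_N)$, so bounding $\Phi(p_N)$ requires controlling the very pairing you are trying to estimate; and even if you kept $\{p_N\}$ in a sublevel set of $\Phi$, you would still need that such sets are norm-bounded in $\mset^*$, which is established only in the Orlicz/$\phi$-divergence setting of Appendix~\ref{s:ap_Orlicz}, not in the abstract setting of Theorem~\ref{th:gvi-existence}. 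What makes the cited closed-graph result go through is precisely a local Lipschitz (equivalently, local boundedness) property of the function at the limit point: if $\cR$ is finite valued near $M_\star$ (hence continuous, as in Corollaries~\ref{cor:gvi-existence1} and \ref{cor:gvi-existence2}), subgradients at points in a neighborhood of $M_\star$ are uniformly norm-bounded by the local Lipschitz rank, the offending pairing vanishes, and your limit passage becomes rigorous; alternatively, cite the closed-graph theorem directly, as the paper does. As written, the decisive estimate is asserted rather than proved, so the proposal is incomplete exactly at the point where the paper outsources the work.
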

\begin{proof}
  Outside of a null set, this result follows from
  \cite[Prop.~2.1.5(b)]{FHClarke_1998a}.
\end{proof}
%\begin{theorem}[Asymptotic Consistency]\label{th:consistency}
%  Let the assumptions in Corollary~\ref{th:exist-phi} hold.  In addition,
%  suppose that the data $\{y_n\}_{n=1}^\infty$ are identically distributed
%  realizations of the random vector $Y$ and that the pay-off functions $M_N$
%  satisfy the following property:
%  \begin{equation}\label{eq:ULLN}
%    \lim_{N\to\infty}\|M_N - \bbe_{y\sim Y}[m(\cdot,y)]\|_{\psi^\#} \to 0
%    \quad\text{a.s.},
%  \end{equation}
%  where $\bbe_{y\sim Y}$ denotes the expectation with respect to the
%  probability law of the noisy data $Y$.  Finally, suppose that
%  $\bar{p}_N$ is a solution to the CGVI problem \eqref{eq:MLE-phi} for
%  each $N$.  Then, any
%  $\sigma(L_\psi,L_{\psi^\#})$-accumulation point of $\{\bar{p}_N\}$ is a
%  maximizer of the CGVI problem
%  \begin{equation}\label{eq:meanCGVI}
%    \max_{p\in\fD_\varepsilon} \; \bbe[p\,\bbe_{y\sim Y}[m(\cdot,y)]].
%  \end{equation}
%\end{theorem}
%\begin{proof}
%  Let $\{\bar{p}_{N_k}\}$ be a subsequence that
%  $\sigma(L_\psi,L_{\psi^\#})$-converges to $\bar{p}$ a.s.  Since
%  $\fD_\varepsilon$ is $\sigma(L_\psi,L_{\psi^\#})$-closed
%  (cf.~Appendix~\ref{s:ap_Orlicz}), we have that
%  $\bar{p}\in\fD_\varepsilon$ and it follows from \eqref{eq:ULLN} that
%  \[
%    \bbe[p \,\bbe_{y\sim Y}[m(\cdot,y)]]
%     = \lim_{N_k\to\infty}\bbe[p M_{N_k}]
%     \le \lim_{N_k\to\infty} \bbe[\bar{p}_{N_k} M_{N_k}]
%     = \bbe[\bar{p}\,\bbe_{y\sim Y}[m(\cdot,y)]]
%    \quad\text{a.s.}
%  \]
%  for all $p\in\fD_\varepsilon$. Therefore, $\bar{p}$ is a maximizer of
%  \eqref{eq:meanCGVI}.
%\end{proof}
\begin{remark}[Uniform Law of Large Numbers]
  In Theorem~\ref{th:consistency-gvi}, $M_\star$ is typically given by
  $\bbe_{y\sim Y}[m(\cdot,y)]$, where $\bbe_{y\sim Y}$ denotes the expectation
  with respect to the probability law of the noisy data $Y$ and $M_N=m(\cdot,y_N)$
  are generated by identically distributed realizations $y_N$ of $Y$.  In this
  setting, the requirement that $\sigma_N^{-1}M_N\to M_\star$ in $\mset$ a.s.\
  is related to the uniform law of large numbers (LLN).  In particular, if the
  uniform LLN holds and $\sigma_N\to\sigma_\star$, then
  $\sigma_N^{-1}M_N\to M_\star$ also holds. This is the case if, e.g., $\mset$
  is separable, %\citep[Ch.~14]{rubshtein2016foundations},
  $M_N$ and $m(\cdot,Y)$ are independent and identically distributed $\mset$-valued
  random variables, and $\bbe_{y\sim Y}[\|m(\cdot,y)\|_{\psi^\#}]<\infty$
  \citep[Cor.~3.7.21]{gine2016mathematical}.
  This is also the case if, e.g., $\Theta$ is a compact subset of a Euclidean
  space, $\Upsilon$ is a Euclidean space, $m$ is a Carath\`{e}odory function
  and $m(\theta,\cdot)$ is dominated by an integrable function that is
  independent of $\theta$ \citep[Th.~2]{RIJennrich_1969a}.
\end{remark}

\subsection{Coherent Generalized Variational Inference}

The risk measure $\cR$ projected from $\cV$ as in \eqref{eq:risk-reg}
satisifies (R1)--(R3). However, it need not be coherent unless $\cV$ is
positively homogeneous, in which case $\cV_\sigma=\cV$ and $\cR_\sigma=\cR$.
That is, the learning rate $\sigma^{-1}$ has no effect on the estimation.
This is a desirable property since choosing $\sigma$ can be difficult.
In contrast to requiring $\cV$ to be positively homogeneous, we can modify
$\cR_\sigma$ in the following way.  Given $\varepsilon\ge 0$, we define the
risk measure
\begin{equation}\label{eq:eps-risk}
  \cR^\varepsilon(X) \eqdef \inf_{\lambda > 0} \;\{\lambda\varepsilon + \cR_{\lambda}(X)\}
\end{equation}
The risk measure $\cR^{\varepsilon}$ inherits all properties from $\cR$ with the
addition that it is positively homogeneous, and hence coherent. Since
$\cR^\varepsilon$ is coherent, it satisfies
\begin{equation}\label{eq:coherent-gvi}
  \cR^\varepsilon(X) = \sup_{p\in\fA} \; \bbe[p X],
\end{equation}
where the risk envelope $\fA$ is given by
\begin{equation}\label{eq:riskenv-eps}
  \fA = \fD_\varepsilon \eqdef \{p\in\fD \,\vert\, \Phi(p) \le \varepsilon\},
\end{equation}
where $\Phi = \cV^*$ is the generalized $\phi$-divergence associated with the
regret functional $\cV$.  We refer to the optimization problem on the
right-hand side of \eqref{eq:coherent-gvi} with $X=M_N$ as the coherent GVI
(CGVI) problem.  Intuitively, the CGVI problem seeks a density function $p$
that maximizes the expected payoff $M_N$ within $\varepsilon$ of the prior
distribution, when measured by the generalized $\phi$-divergence $\Phi$.
For example, if $\cV$ is the exponential disutility function, then
$\cR(X)=\log\bbe[\exp(X)]$ is the entropic risk measure and $\cR^\varepsilon$
is the entropic value-at-risk.  Moreover, $\Phi=\cV^*$ generates the KL
divergence.  Consequently, \eqref{eq:coherent-gvi} seeks a probability
density function that maximizes the expected payoff and is within $\varepsilon$
of $\Pi$ when measured by the KL divergence.  An obvious extension is to
replace $\cR^\varepsilon$ with some arbitrary coherent risk measure, not
necessarily of the form \eqref{eq:eps-risk}.  This extension leads to the
interpretation of CGVI as finding a density that maximizes the expected payoff
from the set of admissible densities $\fA$.  However, we will restrict our
attention to \eqref{eq:eps-risk} given it's close ties with GVI.

The CGVI problem presents a trade-off between selecting the learning rate
$\sigma^{-1}$ as in GVI and selecting the $\phi$-diveregence tolerance
$\varepsilon$.  Both tasks can be challenging, however, choosing $\varepsilon$
may be more intuitive in certain applications.  For example, if one can sample
from the Bayesian posterior $p_{\rm post}$, then we can choose
$\varepsilon > \Phi(p_{\rm post})$, ensuring that the CGVI density $\bar{p}$
improves upon the traditional Bayesian solution.  In addition, if an intuitive
method for choosing $\varepsilon$ exists, then the CGVI formulation determines
an optimal inverse learning rate by solving for $\lambda$ in the definition
of $\risk^\varepsilon$.

Our first result proves that the risk envelope is indeed given by
\eqref{eq:riskenv-eps}.
\begin{proposition}\label{prop:riskenv-eps}
  The risk envelope of $\cR^\varepsilon$ for $\varepsilon \ge 0$ is given
  by \eqref{eq:riskenv-eps}.
\end{proposition}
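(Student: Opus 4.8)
The plan is to identify the Fenchel conjugate $(\cR^\varepsilon)^*$ explicitly: I will show $(\cR^\varepsilon)^* = \delta_{\fD_\varepsilon}$, so that the risk envelope $\partial\cR^\varepsilon(0)=\mathrm{dom}\,(\cR^\varepsilon)^* = \fD_\varepsilon$, which is exactly \eqref{eq:riskenv-eps} and confirms the worst-case representation \eqref{eq:coherent-gvi}. The starting point is the identity $\cR_\lambda = (\lambda\Phi + \delta_{\fD})^*$ for every $\lambda>0$, which is established inside the proof of Theorem~\ref{th:gvi-existence} (there it is shown that $\sup_{p\in\fD}\{\bbe[pX]-\lambda\Phi(p)\}=\cR_\lambda(X)$ for all $X\in\mset$). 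Dualizing and invoking Fenchel--Moreau gives $(\cR_\lambda)^* = (\lambda\Phi+\delta_{\fD})^{**} = \lambda\Phi+\delta_{\fD}$, provided $\lambda\Phi+\delta_{\fD}$ is proper, convex and $\sigma(\dset,\mset)$-lower semicontinuous. Properness holds since $1\in\fD$ and $\Phi(1)=0$ by Proposition~\ref{prop:Phi-conj}; convexity is clear; and lower semicontinuity follows because $\Phi$ is $\sigma(\dset,\mset)$-lsc by Proposition~\ref{prop:Phi-conj} and $\fD$ is $\sigma(\dset,\mset)$-closed, being the intersection of the closed affine set $\{\langle p,1\rangle=1\}$ with the closed cone $\bigcap_{B\in\cB}\{\langle p,\mathbbm{1}_B\rangle\ge 0\}=\{p\ge 0\text{ a.s.}\}$ (all test functions $1,\mathbbm{1}_B$ lie in $\mset$).

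With this in hand I would compute $(\cR^\varepsilon)^*$ directly from the definition \eqref{eq:eps-risk}. Since the conjugate of a pointwise infimum of a family of functions is the pointwise supremum of their conjugates, and $\cR^\varepsilon=\inf_{\lambda>0}\{\lambda\varepsilon+\cR_\lambda\}$, one gets
\[
  (\cR^\varepsilon)^*(\eta) = \sup_{\lambda>0}\big\{(\cR_\lambda)^*(\eta)-\lambda\varepsilon\big\} = \delta_{\fD}(\eta) + \sup_{\lambda>0}\lambda\big(\Phi(\eta)-\varepsilon\big) \quad\forall\,\eta\in\mset^*.
\]
The remaining supremum equals $0$ when $\Phi(\eta)\le\varepsilon$ (let $\lambda\downarrow 0$) and $+\infty$ when $\Phi(\eta)>\varepsilon$ (let $\lambda\to\infty$); hence $(\cR^\varepsilon)^*(\eta)$ is finite, and equal to $0$, precisely when $\eta\in\fD$ and $\Phi(\eta)\le\varepsilon$, i.e.\ $(\cR^\varepsilon)^* = \delta_{\fD_\varepsilon}$. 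Therefore $\mathrm{dom}\,(\cR^\varepsilon)^* = \fD_\varepsilon$. Since $\cR^\varepsilon$ is coherent, its risk envelope in the sense of \eqref{eq:coherent-risk} is exactly $\mathrm{dom}\,(\cR^\varepsilon)^* = \partial\cR^\varepsilon(0)$, which yields \eqref{eq:riskenv-eps}; for consistency one also records that $\fD_\varepsilon$ is convex and $\sigma(\dset,\mset)$-closed, by the same argument as for $\fD$ together with the $\sigma(\dset,\mset)$-lower semicontinuity of $\Phi$.

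The step I expect to be the main obstacle is the identification $(\cR_\lambda)^* = \lambda\Phi+\delta_{\fD}$, i.e.\ the claim that nothing is lost under biconjugation. In the regular case $\dset=\mset^*$ the topology $\sigma(\dset,\mset)$ is the weak-$*$ topology on $\mset^*$, so the lower-semicontinuity/closedness arguments above apply verbatim and Fenchel--Moreau is immediate. In the case $\mset=\dset^*$ (e.g.\ $\mset=L^\infty$, $\dset=L^1$) one must be more careful: a priori $\partial\cR^\varepsilon(0)\subseteq\mset^*$ could be strictly larger than $\dset$ (it may contain merely finitely additive elements), so an additional argument is needed — typically a Fatou / ``continuity from above'' property of $\cR^\varepsilon$ inherited from $\cV$ — to confine the risk envelope to $\dset$ before the conjugate computation can be carried out entirely within the pairing $(\dset,\mset)$. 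Everything else in the proof is routine.
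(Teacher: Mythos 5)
Your argument is correct, and its core is the same as the paper's: conjugate the infimum over $\lambda$ in \eqref{eq:eps-risk} to get $(\cR^\varepsilon)^*(\eta)=\sup_{\lambda>0}\{(\cR_\lambda)^*(\eta)-\lambda\varepsilon\}$, then observe that the remaining supremum collapses to $\delta_{\fD_\varepsilon}$, so that $\mathrm{dom}\,(\cR^\varepsilon)^*=\fD_\varepsilon$ as in \eqref{eq:riskenv-eps}. The one step you do differently is the identification of $(\cR_\lambda)^*$: the paper does not biconjugate $\lambda\Phi+\delta_{\fD}$, but instead uses the scaling $\cR_\lambda(\cdot)=\lambda\cR(\lambda^{-1}\cdot)$ together with the (unnumbered) lemma at the end of Section~\ref{ss:risk}, which computes $\cR^*(\eta)=\cV^*(\eta)+\delta_{\{\langle\eta,1\rangle_{\mset^*,\mset}=1\}}(\eta)$ directly from \eqref{eq:risk-reg} for \emph{every} $\eta\in\mset^*$, and then obtains nonnegativity of envelope elements from monotonicity as argued in Section~\ref{ss:risk}. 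That direct computation needs no lower-semicontinuity of $\lambda\Phi+\delta_{\fD}$ and no Fenchel--Moreau step, which is precisely what forces you into the caveat about the pairing $\mset=\dset^*$ (where $(\cR_\lambda)^{*}$ is only pinned down on $\dset$ by biconjugation, and singular elements of $\mset^*$ must be excluded separately); your route, by contrast, makes the convexity/closedness structure of $\lambda\Phi+\delta_{\fD}$ explicit and is self-contained given Proposition~\ref{prop:Phi-conj} and the dual representation inside the proof of Theorem~\ref{th:gvi-existence}. Note also that your $\delta_{\fD}$ versus the paper's normalization constraint $\langle\eta,1\rangle=1$ are reconciled by property~3 of Proposition~\ref{prop:Phi-conj} (finiteness of $\Phi$ already forces $p\ge 0$ a.s.), so the two descriptions of the envelope agree; the residual question of whether, in the nonreflexive case, the envelope could contain elements of $\mset^*\setminus\dset$ is glossed over by the paper as well, so flagging it (as you do) is a point in your favor rather than a defect relative to the paper's proof.
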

\begin{proof}
  First, notice that for any $p\in\mset^*$, we have
  \[
    (\cR^\varepsilon)^*(p) = \sup_{X\in\mset,\,\lambda > 0} \{\bbe[p X] - \lambda\varepsilon - \cR_\lambda(X)\}
      = \sup_{\lambda > 0} \lambda(\cR^*(p) - \varepsilon).
  \]
  Consequently, $(\cR^\varepsilon)^*(p)$ is finite if and only if
  $\Phi(p) \le \varepsilon$ and $\langle p,1\rangle_{\mset^*,\mset}=1$.
  Finally, $p\in \fA$ satisfies $p\ge 0$ a.s.\ since $\cR^\varepsilon$
  is monotonic as demonstrated in Section~\ref{ss:risk}.
\end{proof}

Proposition~\ref{prop:riskenv-eps} provides the intuitive interpretation of
CGVI: maximize the expected payoff while being with $\varepsilon$ of the
prior.  Analogous to Theorem~\ref{th:gvi-existence}, the following result
provides a necessary and sufficient condition for the existence of variational
densities.

\begin{theorem}[Existence of Variational Densities]\label{th:gen-exist}
  Let the generalized $\phi$-divergence $\Phi$ be defined using the risk-averse
  expected disutility function $\cV$ as in Proposition~\ref{prop:Phi-conj} and
  let $\cR^\varepsilon$ be defined as in \eqref{eq:eps-risk} using the risk
  measure $\cR$ projected from $\cV$ as in \eqref{eq:risk-reg}. Then, the CGVI
  problem is given by
  \begin{equation}\label{eq:cgvi}
    \cR^\varepsilon(M_N)=\sup_{p\in\fD_\varepsilon} \;\bbe[p M_N]
      = \min_{\mu\in\real,\,\lambda \ge 0} \;\{\mu + \lambda\varepsilon + (\lambda\Phi)^*(M_N-\mu)\},
  \end{equation}
  and a variational density exists if and only if
  \[
    \partial\cR^\varepsilon(M_N) \cap \dset \neq \emptyset.
  \]
  Moreover, the variational densities are given by
  \[
    \bar{p}\in\partial\cV(\bar{\lambda}^{-1}(M_N-\bar{\mu}))\cap \{p\in\dset\,\vert\, \bbe[p]=1\},
  \]
  where $(\bar{\mu},\bar{\lambda})$ solve the minimization problem
  on the right-hand side of \eqref{eq:cgvi}.
  %Suppose $M_N\in\mset$ and that
  %$v_{\fD_\varepsilon}:\mset\to(-\infty,\infty]$ defined by
  %\begin{equation}\label{eq:ap-MLE-Phi}
  %  v_{\fD_\varepsilon}(X) \eqdef
  %  \max\{\bbe[p X]\,\vert\, p\ge 0\,\,\text{a.s.},\;\;
  %    \bbe[p]=1,\;\;\Phi(p)\le \varepsilon\}
  %\end{equation}
  %for $\varepsilon > 0$ is finite for all $X\in\mset$.
  %Then the estimation problem $v_\varepsilon(M_N)$ defined in
  %\eqref{eq:ap-MLE-Phi} has a solution if and only if
  %$\partial v_\varepsilon(M_N)\cap\fD_\varepsilon\neq\emptyset$.
  %Moreover,
  %\begin{equation}\label{eq:ap-MLE-Phi-dual}
  %  v_{\fD_\varepsilon}(M_N)=\min_{\mu\in\real,\,\lambda\ge 0}
  %    \{\mu + \lambda\varepsilon + (\lambda\Phi)^*(M_N-\mu)\},
  %\end{equation}
  %where $(\lambda\Phi)^*(X) = \lambda\Phi^*(X/\lambda)=\lambda\cV(X/\lambda)$
  %for $\lambda > 0$, and an optimal density is given by
  %\[
  %\bar{p}\in\partial(\bar{\lambda}\Phi)^*(M_N-\bar{\mu})
  %\]
  %where $\bar{\mu}\in\real$ and $\bar{\lambda}\ge 0$ solve
  %\eqref{eq:ap-MLE-Phi-dual}.
\end{theorem}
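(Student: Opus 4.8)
The statement \eqref{eq:cgvi} bundles three claims---the chain of equalities, the existence criterion, and the formula for the variational densities---and the plan is to reduce each of them to facts already established for the projected risk measure $\cR$ and its perspective $\cR_\lambda$. The only substantively new content is the identification of $\cR^\varepsilon(M_N)$ with the two-variable minimization; the rest will follow from coherence of $\cR^\varepsilon$, the equality case of Fenchel--Young, and an affine chain rule, just as in Theorem~\ref{th:gvi-existence}.

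First I would unwind the definition \eqref{eq:eps-risk}: $\cR^\varepsilon(M_N)=\inf_{\lambda>0}\{\lambda\varepsilon+\cR_\lambda(M_N)\}$, and by the computation inside the proof of Theorem~\ref{th:gvi-existence} one has $\cR_\lambda(M_N)=\min_{\mu\in\real}\{\mu+(\lambda\Phi)^*(M_N-\mu)\}$, since $(\lambda\Phi)^*=\lambda\cV(\lambda^{-1}\cdot)$ and monotonicity of $\cV$ collapses the auxiliary random variable to a scalar. Substituting yields $\cR^\varepsilon(M_N)=\inf_{\lambda>0,\,\mu\in\real}\{\mu+\lambda\varepsilon+(\lambda\Phi)^*(M_N-\mu)\}$, and the remaining point is to upgrade the infimum over $\lambda>0$ to a minimum over $\lambda\ge0$: the map $(\mu,\lambda)\mapsto\mu+\lambda\varepsilon+(\lambda\Phi)^*(M_N-\mu)$ is the perspective of a closed convex function---hence jointly convex and lower semicontinuous on $\real\times[0,\infty)$ once $(0\cdot\Phi)^*$ is read as the recession function of $\cV$---and finiteness of $\cR^\varepsilon(M_N)$ gives bounded sublevel sets, so the minimum is attained. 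A cleaner, self-contained alternative for this equality is Fenchel--Rockafellar duality applied directly to $\sup_{p\in\fD_\varepsilon}\bbe[pM_N]$: dualize $\bbe[p]=1$ with a free multiplier $\mu$ and $\Phi(p)\le\varepsilon$ with $\lambda\ge0$ (no extra sign constraint on $p$ is needed, since $\Phi(p)<\infty$ already forces $p\ge0$ by Proposition~\ref{prop:Phi-conj}); the dual objective is precisely $\mu+\lambda\varepsilon+(\lambda\Phi)^*(M_N-\mu)$, and strong duality together with dual attainment follows from the Slater point $p\equiv1$, which satisfies $\Phi(1)=0<\varepsilon$ when $\varepsilon>0$ (the case $\varepsilon=0$ is trivial, as $\fD_\varepsilon=\{1\}$). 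The first equality $\cR^\varepsilon(M_N)=\sup_{p\in\fD_\varepsilon}\bbe[pM_N]$ is then just \eqref{eq:coherent-gvi} at $X=M_N$, available from coherence of $\cR^\varepsilon$ and Proposition~\ref{prop:riskenv-eps}.

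Next I would handle the existence criterion and read off the density formula. Coherence of $\cR^\varepsilon$ makes $(\cR^\varepsilon)^*$ the indicator of its risk envelope $\fD_\varepsilon$, so $(\cR^\varepsilon)^*(p)=0$ for $p\in\fD_\varepsilon$; applying the equality case of \eqref{eq:fy-ineq} to $\cR^\varepsilon$ then shows $\bar p\in\fD_\varepsilon$ attains $\bbe[\bar pM_N]=\cR^\varepsilon(M_N)$ if and only if $\bar p\in\partial\cR^\varepsilon(M_N)$. Because subgradients a priori sit in $\mset^*\supseteq\dset$ (cf.\ Remark~\ref{rem:nonexist}), the variational densities are exactly $\partial\cR^\varepsilon(M_N)\cap\dset$, which is nonempty precisely when the CGVI problem has a solution. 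For the explicit form, let $(\bar\mu,\bar\lambda)$ solve the minimization in \eqref{eq:cgvi} and first take $\bar\lambda>0$. Since $\cR^\varepsilon(X)\le\bar\mu+\bar\lambda\varepsilon+(\bar\lambda\Phi)^*(X-\bar\mu)$ for all $X$ with equality at $X=M_N$, this affine-perturbed majorant touches $\cR^\varepsilon$ at $M_N$, whence $\partial\cR^\varepsilon(M_N)\subseteq\partial\big[(\bar\lambda\Phi)^*(\cdot-\bar\mu)\big](M_N)$; using $(\bar\lambda\Phi)^*=\bar\lambda\cV(\bar\lambda^{-1}\cdot)$ and the affine chain rule, this last set equals $\partial\cV\big(\bar\lambda^{-1}(M_N-\bar\mu)\big)$, and intersecting with $\dset$ and recalling $\bbe[p]=1$ on $\fD_\varepsilon$ gives $\bar p\in\partial\cV(\bar\lambda^{-1}(M_N-\bar\mu))\cap\{p\in\dset:\bbe[p]=1\}$. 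One may reach the same conclusion through the identity $\bar\mu=\bar\lambda\bar t$ linking the dual variables to the inner minimizer $\bar t$ in $\cR(\bar\lambda^{-1}M_N)=\min_t\{t+\cV(\bar\lambda^{-1}M_N-t)\}$ and then invoking Lemma~\ref{lem:subdiff-risk} for $\cR_{\bar\lambda}$. The converse inclusion---that every such $p$ is genuinely a variational density---comes from complementary slackness $\bar\lambda(\Phi(\bar p)-\varepsilon)=0$ and the equality case of Fenchel--Young for $\bar\lambda\Phi$, which promote the subgradient relation to $\bbe[\bar pM_N]=\bar\mu+\bar\lambda\varepsilon+(\bar\lambda\Phi)^*(M_N-\bar\mu)=\cR^\varepsilon(M_N)$.

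The step I expect to be the main obstacle is the second equality in \eqref{eq:cgvi}: proving strong duality in this infinite-dimensional pairing, and in particular that the infimum in $\lambda$ is attained and may be taken over the closed half-line with the correct recession-function reading of $(0\cdot\Phi)^*$, requires a constraint qualification and some care with the $\dset$-versus-$\mset^*$ distinction flagged in Remark~\ref{rem:nonexist}. A secondary caveat is the degenerate case $\bar\lambda=0$, where $\bar\lambda^{-1}$ is meaningless and the formula must be replaced by the observation $\sup_{p\in\fD_\varepsilon}\bbe[pM_N]=\esssup M_N$, attained by densities concentrated on the essential maximizers of $M_N$; I would either assume the divergence constraint active or treat that boundary case on its own.
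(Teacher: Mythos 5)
Your proposal is correct and follows essentially the route the paper itself indicates: its proof of Theorem~\ref{th:gen-exist} is a one-line appeal to the Fenchel--Young inequality ``similar to the proof of Theorem~\ref{th:gvi-existence},'' i.e., exactly your decomposition $\cR^\varepsilon(M_N)=\inf_{\lambda>0}\{\lambda\varepsilon+\cR_\lambda(M_N)\}$ with $\cR_\lambda$ collapsed to the scalar minimization, Fenchel--Young equality against the indicator $(\cR^\varepsilon)^*=\delta_{\fD_\varepsilon}$ for the existence criterion, and the chain rule (equivalently Lemma~\ref{lem:subdiff-risk}) for the density formula. Your added care about attainment over $\lambda\ge 0$, the Slater/duality alternative, and the degenerate case $\bar\lambda=0$ fills in details the paper leaves implicit but does not constitute a different approach.
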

\begin{proof}
  The proof of this fact follows from the Fenchel-Young inequality,
  similar to the proof of Theorem~\ref{th:gvi-existence}.
\end{proof}

As in Remark~\ref{rem:nonexist}, solutions to \eqref{eq:cgvi} may not exist
if $\mset=\dset^*$, even if $\partial^\varepsilon\cR(M_N)\neq\emptyset$.  The
following two corollaries to Theorem~\ref{th:gen-exist} provide sufficient
conditions for existence of variational densities to \eqref{eq:cgvi} when
$\dset=\mset^*$.
\begin{corollary}\label{cor:cgvi-existence1}
  In the setting of Theorem~\ref{th:gen-exist}, if $\dset=\mset^*$ and
  $\cR^\varepsilon$ is finite valued and continuous at $M_N$, then
  \eqref{eq:cgvi} has a solution.
\end{corollary}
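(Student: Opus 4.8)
The plan is to mirror the proof of Corollary~\ref{cor:gvi-existence1}, with $\cR^\varepsilon$ at $M_N$ playing the role that $\cR$ at $\sigma^{-1}M_N$ played there. First I would recall from the discussion preceding Proposition~\ref{prop:riskenv-eps} that $\cR^\varepsilon$ is a coherent risk measure, so in particular it is proper, convex and $\sigma(\mset,\dset)$-lower semicontinuous on $\mset$. Under the hypothesis that $\cR^\varepsilon$ is finite valued and continuous at $M_N$, the standard convex-analytic fact that a convex function finite and continuous at a point is subdifferentiable there (\cite[Prop.~5.2]{IEkeland_RTemam_1999}) applies and yields $\partial\cR^\varepsilon(M_N)\neq\emptyset$.

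Next, since by assumption $\dset=\mset^*$, the set $\partial\cR^\varepsilon(M_N)$, which a priori is a subset of $\mset^*$, is automatically contained in $\dset$. Hence $\partial\cR^\varepsilon(M_N)\cap\dset=\partial\cR^\varepsilon(M_N)\neq\emptyset$. Finally, Theorem~\ref{th:gen-exist} asserts that the CGVI problem \eqref{eq:cgvi} admits a variational density if and only if $\partial\cR^\varepsilon(M_N)\cap\dset\neq\emptyset$, so the preceding step concludes the argument; the variational density is then a risk identifier $\bar p\in\partial\cR^\varepsilon(M_N)$, expressible through $\partial\cV(\bar\lambda^{-1}(M_N-\bar\mu))$ in terms of the minimizers $(\bar\mu,\bar\lambda)$ of the right-hand side of \eqref{eq:cgvi}, exactly as in Theorem~\ref{th:gen-exist}.

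I do not expect a substantive obstacle: the only point needing care is that ``continuity at $M_N$'' be read as norm-continuity on the Banach space $\mset$, so that \cite[Prop.~5.2]{IEkeland_RTemam_1999} is directly applicable — this is the same reading used in Corollary~\ref{cor:gvi-existence1}, and it is what makes the identification $\partial\cR^\varepsilon(M_N)\subseteq\mset^*=\dset$ meaningful.
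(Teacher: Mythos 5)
Your proposal is correct and follows exactly the route the paper intends: the paper's proof simply declares the argument analogous to Corollary~\ref{cor:gvi-existence1}, which is precisely the argument you spell out --- subdifferentiability of the convex functional $\cR^\varepsilon$ at $M_N$ via \cite[Prop.~5.2]{IEkeland_RTemam_1999}, identification $\partial\cR^\varepsilon(M_N)\subseteq\mset^*=\dset$, and then the necessary-and-sufficient condition of Theorem~\ref{th:gen-exist}.
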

\begin{proof}
  The proof of this fact is analogous to the proof of
  Corollary~\ref{cor:gvi-existence1}.
\end{proof}
\begin{corollary}\label{cor:cgvi-existence2}
  In the setting of Theorem~\ref{th:gen-exist}, if $\dset=\mset^*$ and $\cV$ is
  finite valued, then \eqref{eq:gvi0} has a solution.
\end{corollary}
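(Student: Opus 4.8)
The plan is to reduce to Corollary~\ref{cor:cgvi-existence1}, exactly as Corollary~\ref{cor:gvi-existence2} reduces to Corollary~\ref{cor:gvi-existence1}: I would show that the coherent risk measure $\cR^\varepsilon$ is finite valued on all of $\mset$, since a finite-valued convex functional on a Banach space is automatically continuous \cite[Cor.~2.5]{IEkeland_RTemam_1999}, and in particular continuous at $M_N$. Given finiteness of $\cV$, the upper bound on $\cR^\varepsilon$ is immediate by domination; the only real content is ruling out the value $-\infty$, which is where the risk-aversion axiom (V2) enters.

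First I would show $\cR$ is finite valued. From the definition \eqref{eq:risk-reg}, taking $t=0$ gives $\cR(Y)\le\cV(Y)<\infty$ for every $Y\in\mset$ because $\cV$ is finite valued. For the lower bound, (V2) yields $\cV(Y-t)\ge\bbe[Y-t]=\bbe[Y]-t$, so $t+\cV(Y-t)\ge\bbe[Y]$ for all $t\in\real$, and hence $\cR(Y)\ge\bbe[Y]>-\infty$ since $Y\in\mset$. Passing to $\cR^\varepsilon$ via \eqref{eq:eps-risk}, the positive homogeneity of the dilation gives $\cR_\lambda(Y)=\lambda\cR(\lambda^{-1}Y)\ge\lambda\,\bbe[\lambda^{-1}Y]=\bbe[Y]$, so that $\cR^\varepsilon(Y)=\inf_{\lambda>0}\{\lambda\varepsilon+\cR_\lambda(Y)\}\ge\bbe[Y]>-\infty$ for $\varepsilon\ge 0$, while the choice $\lambda=1$ gives $\cR^\varepsilon(Y)\le\varepsilon+\cR(Y)<\infty$. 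Thus $\cR^\varepsilon$ is finite valued on $\mset$.

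It then follows that $\cR^\varepsilon$ is continuous on $\mset$, in particular at $M_N$, so Corollary~\ref{cor:cgvi-existence1} applies and a variational density exists (of the form given in Theorem~\ref{th:gen-exist}). The main obstacle is the verification that $\cR^\varepsilon$ never takes the value $-\infty$: this is the one step that genuinely uses risk aversion rather than mere monotonicity, and it relies on the elementary lower bound $\cV(X)\ge\bbe[X]$ together with the homogeneity of $\lambda\mapsto\cR_\lambda$; everything else is a one-line domination by $\cV$.
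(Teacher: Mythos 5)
Your proposal is correct and takes essentially the same route as the paper: deduce that $\cR^\varepsilon$ is finite valued from the finiteness of $\cV$, invoke \citep[Cor.~2.5]{IEkeland_RTemam_1999} for continuity, and then apply Corollary~\ref{cor:cgvi-existence1}; the paper's own proof is simply the remark that the argument is analogous to Corollary~\ref{cor:gvi-existence2}. Your explicit lower bounds $\cR(Y)\ge\bbe[Y]$ and $\cR^\varepsilon(Y)\ge\bbe[Y]$ via (V2), ruling out the value $-\infty$, supply a detail the paper leaves implicit.
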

\begin{proof}
  The proof of this fact is analogous to the proof of
  Corollary~\ref{cor:gvi-existence2}.
\end{proof}

The final result in this section concerns the large data limit (i.e., as
$N\to+\infty$) and presents type of law of large numbers for the CGVI densities
$p_N$ associated with the payoff function $M_N$.

\begin{theorem}[Asymptotic Consistency]\label{th:consistency-cgvi}
  Consider the setting of Theorem~\ref{th:gen-exist} and suppose that $\cR$ is
  finite valued on $\mset$. Let $\{\varepsilon_N\}\subset[0,+\infty)$
  and $\{M_N\}$ satisfy $M_N\in\mset$ a.s.  Furthermore, suppose
  that there exists $p_N\in\partial\cR^{\varepsilon_N}(M_N)\cap\dset$ a.s.\ for
  all $N$, that $\varepsilon_N\to\varepsilon\ge 0$ and that $M_N\to M_\star$ in
  $\mset$ a.s.  Then, any $\sigma(\dset,\mset)$-accumulation point, $p_\star$, of the
  variational densities $\{p_N\}$ satisfies
  \[
    p_\star\in\partial\cR^\varepsilon(M_\star)\cap\dset.
  \]
  In particular, $p_\star$ solves the asymptotic CGVI problem
  \[
    \max_{p\in\fD_\varepsilon}\;\bbe[p M_\star].
  \]
\end{theorem}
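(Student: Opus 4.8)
The plan is to fix a realization outside the null set on which the stated almost‑sure hypotheses hold (namely $M_N\in\mset$ and $p_N\in\partial\cR^{\varepsilon_N}(M_N)\cap\dset$ for every $N$, and $M_N\to M_\star$ in $\mset$) and then argue deterministically along a subnet, still denoted $\{p_N\}$, with $p_N\to p_\star$ in the $\sigma(\dset,\mset)$‑topology. Since $\cR^\varepsilon$ is coherent with risk envelope $\fD_\varepsilon$ by Proposition~\ref{prop:riskenv-eps}, we have $(\cR^\varepsilon)^*=\delta_{\fD_\varepsilon}$, so it suffices to establish the two facts (a) $p_\star\in\fD_\varepsilon$ and (b) $\bbe[p_\star M_\star]=\cR^\varepsilon(M_\star)$: together they force equality in the Fenchel--Young inequality at $(M_\star,p_\star)$, hence $p_\star\in\partial\cR^\varepsilon(M_\star)\cap\dset$, which is the assertion, and, via the representation in Theorem~\ref{th:gen-exist} read with $M_\star$ in place of $M_N$, identifies $p_\star$ as a solution of the asymptotic CGVI problem. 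It should be emphasized that, unlike in Theorem~\ref{th:consistency-gvi}, we cannot merely invoke the closed‑graph property of the subdifferential of a \emph{fixed} convex function \citep[Prop.~2.1.5(b)]{FHClarke_1998a}, because the risk measure $\cR^{\varepsilon_N}$ itself varies with $N$ through $\varepsilon_N$.

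For fact (a), I would first rewrite the subgradient hypothesis: since $(\cR^{\varepsilon_N})^*=\delta_{\fD_{\varepsilon_N}}$, the condition $p_N\in\partial\cR^{\varepsilon_N}(M_N)\cap\dset$ is equivalent to $p_N\in\fD_{\varepsilon_N}$ (that is, $\bbe[p_N]=1$, $p_N\ge 0$ a.s., and $\Phi(p_N)\le\varepsilon_N$) together with $\bbe[p_NM_N]=\cR^{\varepsilon_N}(M_N)=\sup_{q\in\fD_{\varepsilon_N}}\bbe[qM_N]$. Then $\langle p_\star,1\rangle=\lim_N\langle p_N,1\rangle=1$ and $\langle p_\star,\mathbbm{1}_B\rangle=\lim_N\langle p_N,\mathbbm{1}_B\rangle\ge 0$ for every $B\in\cB$ (constants and characteristic functions lie in $\mset$), so $\bbe[p_\star]=1$ and $p_\star\ge 0$ a.s.; and the $\sigma(\dset,\mset)$‑lower semicontinuity of $\Phi=\cV^*$ from Proposition~\ref{prop:Phi-conj} gives $\Phi(p_\star)\le\liminf_N\Phi(p_N)\le\liminf_N\varepsilon_N=\varepsilon$. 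Hence $p_\star\in\fD_\varepsilon$.

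The step I expect to be the main obstacle is passing to the limit in the bilinear form $\bbe[p_NM_N]$, which pairs the weakly convergent net $\{p_N\}$ against the norm‑convergent net $\{M_N\}$ and therefore requires norm‑boundedness of $\{p_N\}$. When $\mset=\dset^*$ (so $\dset=L^1$, $\mset=L^\infty$) this is free, since $\|p_N\|_\dset=\bbe[p_N]=1$. When $\dset=\mset^*$, I would argue that $\cR$, being finite valued, is continuous on $\mset$ \citep[Cor.~2.5]{IEkeland_RTemam_1999} and hence bounded on a ball $B(M_\star,r)$; combined with the sandwich $\bbe[X]\le\cR^{\varepsilon_N}(X)\le\cR(X)+\varepsilon_N\le\cR(X)+\sup_N\varepsilon_N$ (the lower bound since $1\in\fD_{\varepsilon_N}$, the upper bound by taking $\lambda=1$ in \eqref{eq:eps-risk}), this makes $\{\cR^{\varepsilon_N}\}$ uniformly bounded on $B(M_\star,r)$, and the standard local boundedness estimates for convex functions then yield a common Lipschitz constant on $B(M_\star,r/2)$ and thus a uniform bound on $\partial\cR^{\varepsilon_N}(M_N)$ for all large $N$ (for which $M_N\in B(M_\star,r/2)$). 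With $\sup_N\|p_N\|<\infty$ in hand, the splitting $\langle p_N,M_N\rangle-\langle p_\star,M_\star\rangle=\langle p_N,M_N-M_\star\rangle+\langle p_N-p_\star,M_\star\rangle$ together with $M_N\to M_\star$ in $\mset$ gives $\bbe[p_NM_N]\to\bbe[p_\star M_\star]$.

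For fact (b), the inequality $\cR^\varepsilon(M_\star)\ge\bbe[p_\star M_\star]$ is immediate from $p_\star\in\fD_\varepsilon$. For the reverse, take any $q\in\fD_\varepsilon$ with $\Phi(q)<\varepsilon$; since $\varepsilon_N\to\varepsilon>\Phi(q)$ we have $q\in\fD_{\varepsilon_N}$ for all large $N$, so $\bbe[qM_N]\le\cR^{\varepsilon_N}(M_N)=\bbe[p_NM_N]$, and letting $N\to\infty$ (using that $q\in\dset$ is fixed and $M_N\to M_\star$ on the left, and the previous step on the right) gives $\bbe[qM_\star]\le\bbe[p_\star M_\star]$. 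A general $q\in\fD_\varepsilon$ is handled by approximation: $q_t\eqdef(1-t)+tq$ satisfies $\Phi(q_t)\le(1-t)\Phi(1)+t\Phi(q)\le t\varepsilon<\varepsilon$ by convexity of $\Phi$ and $\Phi(1)=0$, so $\bbe[q_tM_\star]\le\bbe[p_\star M_\star]$, and letting $t\uparrow 1$ gives $\bbe[qM_\star]\le\bbe[p_\star M_\star]$. Taking the supremum over $q\in\fD_\varepsilon$ yields $\cR^\varepsilon(M_\star)\le\bbe[p_\star M_\star]$, hence (b). Finally, $(\cR^\varepsilon)^*(p_\star)=0$ by (a), so $\cR^\varepsilon(M_\star)+(\cR^\varepsilon)^*(p_\star)=\langle p_\star,M_\star\rangle$ and the Fenchel--Young inequality holds with equality, giving $p_\star\in\partial\cR^\varepsilon(M_\star)\cap\dset$; since in addition $\bbe[p_\star M_\star]=\cR^\varepsilon(M_\star)=\sup_{p\in\fD_\varepsilon}\bbe[pM_\star]$ with $p_\star\in\fD_\varepsilon$, $p_\star$ solves the asymptotic CGVI problem, as claimed.
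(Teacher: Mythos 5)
Your proof is correct, but it takes a genuinely different route from the paper's. You argue directly on the primal side: since $\cR^{\varepsilon}$ is coherent with $(\cR^{\varepsilon})^*=\delta_{\fD_\varepsilon}$, you show the weak accumulation point is feasible ($\bbe[p_\star]=1$, $p_\star\ge 0$, and $\Phi(p_\star)\le\liminf_N\Phi(p_N)\le\varepsilon$ by the $\sigma(\dset,\mset)$-lower semicontinuity of $\Phi$) and that it attains the supremum, by passing to the limit in the optimality $\bbe[qM_N]\le\bbe[p_NM_N]$ and handling arbitrary competitors $q\in\fD_\varepsilon$ through the convex combination $q_t=(1-t)+tq$; Fenchel--Young equality then delivers $p_\star\in\partial\cR^\varepsilon(M_\star)\cap\dset$. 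The paper instead proves slice (Attouch--Wets) convergence $\cR^{\varepsilon_N}\to\cR^\varepsilon$ (continuity of $\varepsilon'\mapsto\cR^{\varepsilon'}(X)$ plus a recovery sequence for the conjugates), invokes graph convergence of the subdifferentials, and identifies the limit via maximal monotonicity. Your approach is more elementary and self-contained, and it makes explicit the one analytic point the paper's limit passage $\langle p_N-\eta_N, M_N-X_N\rangle_{\mset^*,\mset}\to\langle p_\star-\eta,M_\star-X\rangle_{\mset^*,\mset}$ uses tacitly: pairing a weakly convergent net of densities with a strongly convergent net of payoffs requires a norm bound on $\{p_N\}$, which you supply trivially when $\dset=L^1$ and via a uniform local Lipschitz estimate (from $\bbe[X]\le\cR^{\varepsilon_N}(X)\le\cR(X)+\varepsilon_N$ and finiteness of $\cR$) when $\dset=\mset^*$. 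What the paper's route buys is brevity once the variational-convergence machinery is cited, and a statement about graph convergence of $\partial\cR^{\varepsilon_N}$ that is of independent use; note also that your interpolation $q_t$ is essentially the same construction the paper uses for its second slice-convergence condition. One small patch: your approximation step fails at $\varepsilon=0$ (there is no $q$ with $\Phi(q)<0$, and $t\varepsilon<\varepsilon$ is then false), but in that case every $q\in\fD_0$ satisfies $\Phi(q)=0\le\varepsilon_N$ for all $N$, so the direct comparison applies without approximation---the same trivial disposal the paper makes of $\varepsilon=0$.
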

\begin{proof}
  %This result follows from \cite[Prop.~2.1.5(b)]{FHClarke_1998a}.
  To prove this result, we first demonstrate that $\cR^{\varepsilon_N}$
  converges to $\cR^\varepsilon$ with respect to the slice topology.  If
  so, then $\partial\cR^{\varepsilon_N}$ graph converges
  $\partial\cR^\varepsilon$ \citep[Th.~4.2]{attouch1993convergence}.
  Suppose this is the case, then for all $X\in\mset$ and
  $\eta\in\partial\cR^\varepsilon(X)$ there exists $(X_N,\eta_N)$ with
  $\eta_N\in\partial\cR^{\varepsilon_N}(X_N)$ for all $N$ such that
  $\eta=\lim_N\eta_N$ in $\mset^*$ and $X=\lim_N X_N$ in $\mset$. Let
  $\{(M_N,p_N)\}$ denote a subsequence (without relabeling) on which
  $p_\star=\operatorname{\sigma(\dset,\mset)-lim}_N p_N$.  Since
  $\cR^{\varepsilon'}$ is convex for all $\varepsilon'\ge 0$,
  $\partial\cR^{\varepsilon'}$ is maximally monotone. Consequently, we have
  that
  \[
    \langle p_N-\eta_N, M_N - X_N\rangle_{\mset^*,\mset} \ge 0 \quad\text{a.s.}\quad\forall\,N.
  \]
  Since $(M_\star-X)=\lim_N (M_N-X_N)$ a.s.\ in $\mset$ and
  $(p_\star-\eta)=\operatorname{\sigma(\dset,\mset)-lim}_N (p_N-\eta_N)$, we
  have that
  \[
    \langle p_\star-\eta, M_\star-X\rangle_{\mset^*,\mset} \ge 0 \quad\text{a.s.}
  \]
  Since this holds for all $(X,\eta)$ with
  $\eta\in\partial\cR^{\varepsilon}(X)$, we have that
  $p_\star\in\partial\cR^\varepsilon(M_\star)$ by the maximal monotonicity
  of $\partial\cR^\varepsilon$, as desired.

  To prove slice convergence, we must show that
  \begin{enumerate}
    \item $\forall\, X\in\mset$, $\exists\,X_N\to X$ in $\mset$ such that $\cR^{\varepsilon_N}(X_N)\to\cR^\varepsilon(X)$;
    \item $\forall\, p\in\mset^*$, $\exists\,p_N\to p$ in $\mset^*$ such that $(\cR^{\varepsilon_N})^*(p_N)\to(\cR^\varepsilon)^*(p)$
  \end{enumerate}
  \citep[Th.~3.1]{attouch1993convergence}.
  We will prove that $\varepsilon\mapsto\cR^\varepsilon(X)$ is continuous for
  fixed $X\in\mset$ by first proving that it is concave. Let
  $\varepsilon_1,\,\varepsilon_2\ge 0$ and $t\in[0,1]$, then
  \[
    \cR^{t\varepsilon_1 + (1-t)\varepsilon_2}(X)
       = \inf_{\lambda>0} \{t (\varepsilon_1\lambda + \cR_\lambda(X)) + (1-t)(\varepsilon_2\lambda + \cR_\lambda(X))\}
     \ge t \cR^{\varepsilon_1}(X) + (1-t)\cR^{\varepsilon_2}(X).
  \]
  Moreover, since
  $\cR^{\varepsilon'}(X) \le \varepsilon' + \cR(X) < +\infty$
  for all $\varepsilon' \ge 0$, $\varepsilon\mapsto\cR^\varepsilon(X)$ is
  finite valued, concave, and hence continuous on $[0,+\infty)$.  Consequently,
  we can take $X_N=X$ for all $N$, which proves the first condition.

  To prove the second, we note that for any $\varepsilon' \ge 0$,
  $(\cR^{\varepsilon'})^*$ is the indicator function of $\fD_{\varepsilon'}$.
  If $\varepsilon=0$, then $\fD_0=\{\pi\}$ and the result is trivial.
  We therefore assume $\varepsilon > 0$.  Let
  $p\in\dset\setminus\fD_\varepsilon$, then
  $\delta_{\fD_{\varepsilon_N}}(p)=+\infty$ for $N$ sufficiently large since
  either $p\not\in\fD$ or $\Phi(p) > \varepsilon$. In this case, we can take
  $p_N=p$ for all $N$ to verify the second condition. On the other hand,
  suppose $p\in\fD_\varepsilon$ and define
  \[
    p_N = \left\{\begin{array}{ll}
      p & \text{if $\varepsilon_N \ge \varepsilon$} \\
      \tfrac{\varepsilon_N}{\varepsilon} p + (1-\tfrac{\varepsilon_N}{\varepsilon}) & \text{if $\varepsilon_N > \varepsilon$}
    \end{array}\right. .
  \]
  Owing to convexity and the fact that $\Phi(1) = 0$, we have
  $\Phi(p_N) \le \varepsilon_N$ for all $N$.  Moreover, we clearly have
  that $p_N\to p$ and $\delta_{\fD_{\varepsilon_N}}(p_N) = 0$ for
  all $N$, proving the second condition.
\end{proof}

\subsection{Application to $\phi$-Divergence-Based Variational Inference}\label{sect:phi-div}
In this subsection, we analyze GVI \eqref{eq:gvi0} and CGVI \eqref{eq:cgvi}
problems when $\Phi$ is a $\phi$-divergence, i.e., $\Phi(p) = \bbe[\phi(p)]$.
Recall that $\phi:\real\to[0,+\infty]$ is a proper, closed and convex function
satisfying $\phi(1)=0$ and $\phi(t) = \infty$ for $t<0$.  Moreover, $\phi$
generates a risk-averse regret function $v=\phi^*$. In this case, the GVI
problem is
\begin{equation}\label{eq:gvi-phi}
  \cR_{\sigma}(M_N) = \sup_{p\in\fD}\; %_{p\in L^1(\Theta,\cB,\Pi)}\,
    \left\{\bbe\left[p M_N\right] - \sigma\bbe[\phi(p)]\right\}
\end{equation}
and the CGVI problem is
\begin{equation}\label{eq:cgvi-phi}
  \cR^{\varepsilon}(M_N) = \sup_{p\in\fD}\; %_{p\in L^1(\Theta,\cB,\Pi)}\,
    \left\{\,\bbe[p M_N]
    \,\vert\, \bbe[\phi(p)] \le \varepsilon\,\right\}.
\end{equation}

The natural function spaces in which to analyze \eqref{eq:gvi-phi} and
\eqref{eq:cgvi-phi} are the Orlicz spaces \citep{GAEdgar_LSucheston_1992a}.  To
introduce the Orlicz spaces that we will use, we first recall that the regret
function $v$ associated with $\phi$ is proper, closed, convex, increasing,
and satisfies $v(0) = 0$.  Consequently, $v$ is an Orlicz function as long as
there exists $t >0$ such that $v(t) < +\infty$
\citep[Def.~13.1.1]{rubshtein2016foundations}, which we assume to hold.
%the Fenchel conjugate of $\phi$ satisfies
%\[
%  \phi^*(t) \eqdef \sup_{x\in\real} \{tx - \phi(x)\}
%             = \sup_{x\ge 0} \{tx - \phi(x)\}
%\]
%since $\phi(t)=\infty$ for $t<0$.  Moreover, the assumptions on $\phi$ ensure
%that $\phi^*$ satisfies the following properties: $\phi^*$ is proper, closed
%and convex, $\phi^*(0)=0$, and $\phi^*(t)\ge t$ for all $t\in\real$.  As a
%result, $\phi^*$ is a so-called Orlicz function as long as $\phi^*(t) < \infty$
%for some $t>0$ \citep[Def.~13.1.1]{rubshtein2016foundations},
%which we assume to hold. 
We denote the Orlicz conjugate (also called the monotonic conjugate) of
$v$ by $\psi=v^\#$, i.e.,
\[
  \psi(x) \eqdef \sup_{t\ge 0}\{tx - v(t)\}.
\]
%and the Orlicz conjugate of $\psi$ by $\psi^\#$. That is,
%\[
%  \psi(x) \eqdef \sup_{t\ge 0}\{tx - \phi^*(t)\}
%  \qquad\text{and}\qquad
%  \psi^\#(x) \eqdef \sup_{t\ge 0}\{tx - \psi(t)\},
%\]
%respectively. 
A straightforward calculation demonstrates that the Orlicz conjugate of $v$
is given by
\[
  \psi(x) = \left\{\begin{array}{ll}
    0 & \text{if $x<1$} \\
    \phi(x) & \text{if $x\ge 1$} \\
  \end{array}\right. .
\]
and $\psi^\# = v$.  We employ the Orlicz spaces generated by $\psi$ and
$\psi^\#$ to analyze \eqref{eq:gvi-phi} and \eqref{eq:cgvi-phi}.

Recall that the Orlicz space $L_\psi\eqdef L_\psi(\Theta,\cB,\Pi)$ is the linear
space of equivalence classes of $\cB$-measurable functions $X$, equal up to a
set of measure zero, such that $\bbe[\psi(|X/a|)]<\infty$ for some $a>0$ and
that $L_\psi$ is a decomposable Banach lattice when endowed with the Luxemburg
norm
\[
  \|X\|_\psi \eqdef \inf\{\,a>0\,\vert\, \bbe[\psi(|X/a|)] \le 1\,\}.
\]
We similarly define the Orlicz space $L_{\psi^\#}\eqdef L_{\psi^\#}(\Theta,\cB,\Pi)$.
We pair $L_\psi$ with $L_{\psi^\#}$ using the bilinear form
$\langle p, X\rangle\eqdef\bbe[p X]$ for $p\in L_{\psi}$ and $X\in L_{\psi^\#}$,
which is finite by Young's inequality.  In this setting,
$\fD_\varepsilon\subset L_{\psi}$ is bounded and
$\sigma(L_{\psi},L_{\psi^\#})$-closed (see Appendix~\ref{s:ap_Orlicz}).
Here, $\dset=L_{\psi}$ is the space of possible CGVI densities and
$\mset=L_{\psi^\#}$ is the space of pay-off functions $M_N$.  Notice that $\cR$
and $\cR^\varepsilon$ are finite valued on $\mset$ by definition and hence are
continuous and subdifferentiable.  However, this does not guarantee existence
of solutions unless, e.g., $\dset=\mset^*$ is reflexive.
%
%Throughout this discussion, we assume that $\fD_\varepsilon\subseteq\dset$ and
%$M_N\in\mset$ where $\dset$ and $\mset$ are decomposable Banach lattices of
%$\cB$-measurable functions from $\Theta$ into $\real$ that satisfy
%\[
%  \bbe[|p X|] < \infty \quad\forall\,p\in\dset,\;X\in\mset.
%\]
%we associate with $\mset$ and $\dset$, the bilinear form
%$\langle p,X\rangle\eqdef\bbe[p X]$ for $p\in\dset$ and $X\in\mset$, making
%$(\mset,\dset,\langle\cdot,\cdot\rangle)$ paired topological vector spaces.  We
%abuse notation and denote the canonical injection of $\dset$ into $\mset^*$
%simply by $\dset\subseteq \mset^*$ and similarly for the canonical injection of
%$\mset$ into $\dset^*$.  We denote the weak topology on $\mset$ induced by
%$\dset$ by $\sigma(\mset,\dset)$ and similarly the weak topology on $\dset$
%induced by $\mset$ by $\sigma(\dset,\mset)$.  Orlicz spaces or the usual $L^p$
%spaces are often natural choice for $\mset$ and $\dset$. See
%Appendix~\ref{s:ap_Orlicz} for more details.  Although $\dset$ can often be
%chosen as the topological dual space of $\mset$, this added generality allows
%use to handle, e.g., the case when $\mset=L^\infty(\Theta,\cB,\Pi)$ endowed
%with the weak$^*$ topology is paired with $\dset=L^1(\Theta,\cB,\Pi)$ endowed
%with the norm topology.

Our next results are corollaries of Theorems~\ref{th:gvi-existence} and
\ref{th:gen-exist} that provide a semi-analytic expression for the variational
densities of \eqref{eq:gvi-phi} and \eqref{eq:cgvi-phi} and well as techincal
assumptions on $v$ and $\psi$ that ensure existence of these densities.
% and yields
%\begin{equation}\label{eq:MLE-phi-dual}
%  v_{\fD_\varepsilon}(M_N)=\inf_{\mu\in\real,\;\lambda\ge 0}\left\{\,
%     \mu + \lambda\varepsilon + \bbe\left[(\lambda\phi)^*\left(
%       M_N - \mu\right)\right]\right\}.
%\end{equation}

\begin{theorem}[Form of $\phi$-Divergence Densities]\label{th:exist-phi}
  Let $\cV(\cdot)=\bbe[v(\cdot)]$, where $v=\phi^*$.  If a variational density
  $\bar{p}\in\dset$ exists for \eqref{eq:gvi-phi}, then it satisfies
  \[
    \bar{p}(\theta) \in \partial v(\sigma^{-1}(M_N(\theta)-\bar{t})),
  \]
  where $\bar{t}$ solves the optimization problem in \eqref{eq:opt-t}.
  On the other hand, if a variational density $\bar{p}\in\dset$ exists
  for \eqref{eq:cgvi-phi}, then it satisfies
  \begin{equation}\label{eq:subdiff}
    \bar{p}(\theta)\in\partial v(\bar{\lambda}^{-1}(M_N(\theta)-\bar{\mu})),
  \end{equation}
  where $(\bar{\lambda},\bar{\mu})$ solves the optimization problem
  in \eqref{eq:cgvi}.
  %Suppose that $v_{\fD_\varepsilon}:L_{\psi^\#}\to\real$, $M_N\in L_{\psi^\#}$,
  %and there exists $p\in\fD_\varepsilon$ satisfying
  %\begin{equation}\label{eq:subdiff-v}
  %  v_{\fD_\varepsilon}(X)-v_{\fD_\varepsilon}(M_N) \ge \bbe[p (X-M_N)]
  %    \quad\forall\,X\in L_{\psi^\#}.
  %\end{equation}
  %Then, the estimation problem \eqref{eq:MLE-phi} has a solution
  %$\bar{p}\in\fD_\varepsilon$ with the form
  %\begin{equation}\label{eq:subdiff}
  %  \bar{p}(\theta)\in\partial (\bar{\lambda}\phi)^*\left(
  %    M_N(\theta)-\bar{\mu}\right) \quad\text{a.s.,}
  %\end{equation}
  %where $\bar{\lambda}\ge 0$ and $\bar{\mu}\in\real$ denote optimal
  %solutions to \eqref{eq:MLE-phi-dual}.
\end{theorem}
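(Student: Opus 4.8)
The plan is to treat this as a direct corollary of Theorems~\ref{th:gvi-existence} and \ref{th:gen-exist}: those results already identify a variational density as a subgradient, either of $\cR$ (resp.\ $\cR^\varepsilon$) or, via Lemma~\ref{lem:subdiff-risk}, of $\cV$ at a shifted and scaled copy of $M_N$. All that remains is to compute $\partial\cV$ pointwise, and for $\cV = \bbe[v(\cdot)] = I_v$ this is exactly the content of the classical interchange rule between subdifferentiation and integration for convex integral functionals on decomposable spaces. So the proof is essentially ``cite the earlier existence theorem, then cite Rockafellar.''

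First I would specialize the abstract pairing to the $\phi$-divergence setting described just above the theorem: $\dset = L_\psi$, $\mset = L_{\psi^\#}$, and $\cV(\cdot) = I_v(\cdot)$, which is finite (hence continuous and subdifferentiable) on $\mset$ because $v$ is an Orlicz function and $M_N\in L_{\psi^\#}$. Under these hypotheses, if a variational density $\bar p\in\dset$ exists for \eqref{eq:gvi-phi}, then combining Theorem~\ref{th:gvi-existence} with Lemma~\ref{lem:subdiff-risk} applied to $\cR_\sigma$ (equivalently, invoking the proposition stated immediately after Theorem~\ref{th:gvi-existence}) gives $\bar p\in\partial\cV(\sigma^{-1}(M_N-\bar t))\cap\{p\in\dset\,\vert\,\bbe[p]=1\}$ with $\bar t$ a minimizer in \eqref{eq:opt-t}; likewise, if a variational density $\bar p\in\dset$ exists for \eqref{eq:cgvi-phi}, Theorem~\ref{th:gen-exist} yields $\bar p\in\partial\cV(\bar\lambda^{-1}(M_N-\bar\mu))\cap\{p\in\dset\,\vert\,\bbe[p]=1\}$ with $(\bar\lambda,\bar\mu)$ a minimizer in \eqref{eq:cgvi}.

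Next I would invoke the pointwise description of the subdifferential of a convex integral functional \citep{RTRockafellar_1971a}: since $v$ is a finite, autonomous (hence normal) convex integrand, $L_\psi$ is decomposable (cf.\ Appendix~\ref{s:ap_Orlicz}), and $I_v$ is proper on the pairing $(L_{\psi^\#},L_\psi)$, one has for every $X\in\mset$ the identity
\[
  \partial I_v(X)\cap L_\psi = \{\, p\in L_\psi \,\vert\, p(\theta)\in\partial v(X(\theta)) \text{ for a.e.\ } \theta \,\}.
\]
Applying this with $X=\sigma^{-1}(M_N-\bar t)$ and with $X=\bar\lambda^{-1}(M_N-\bar\mu)$, respectively, and recalling $\bar p\in\dset = L_\psi$, I obtain $\bar p(\theta)\in\partial v(\sigma^{-1}(M_N(\theta)-\bar t))$ a.e.\ in the GVI case and \eqref{eq:subdiff} a.e.\ in the CGVI case; the normalization $\bbe[\bar p]=1$ is already encoded in $\bar p\in\fD$ and so need not be restated.

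I expect the main obstacle to be verifying the hypotheses of the interchange theorem in the Orlicz pairing rather than in a reflexive $L^p$ space: one must confirm that $L_\psi$ is decomposable, that $I_v$ is proper on $(L_{\psi^\#},L_\psi)$, and---when $\mset^*\supsetneq\dset$---that the displayed formula captures precisely the part $\partial I_v(X)\cap L_\psi$ of the (possibly larger) subdifferential living in $\mset^*$, which is exactly what is needed because the theorem already hypothesizes $\bar p\in\dset$. A secondary, routine point is to check that $\theta\mapsto\partial v(\sigma^{-1}(M_N(\theta)-\bar t))$ (resp.\ $\theta\mapsto\partial v(\bar\lambda^{-1}(M_N(\theta)-\bar\mu))$) is a nonempty-valued measurable set-valued map, which follows from $v$ being finite and convex on an open interval containing the essential range of the argument.
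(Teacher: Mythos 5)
Your proposal is correct and follows essentially the same route the paper intends: the text presents Theorem~\ref{th:exist-phi} as a corollary of Theorem~\ref{th:gvi-existence} (via the proposition following it and Lemma~\ref{lem:subdiff-risk}) and Theorem~\ref{th:gen-exist}, combined with Rockafellar's pointwise characterization of subgradients of the convex integral functional $\cV(\cdot)=\bbe[v(\cdot)]$ on the decomposable Orlicz pairing $(L_{\psi^\#},L_\psi)$, which is exactly your argument. Your attention to the fact that the interchange rule only characterizes $\partial\cV(X)\cap\dset$ (sufficient here since $\bar{p}\in\dset$ is hypothesized) is the right technical point, and the implicit restriction to $\bar{\lambda}>0$ in the CGVI formula is a feature of the statement itself, acknowledged later in the paper, not a gap in your proof.
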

%\begin{proof}
%  This is an application of Theorem~\ref{th:gen-exist}.
%\end{proof}
\begin{corollary}[Existence of $\phi$-Divergence Densities]\label{cor:exist-phi}
  Consider the setting of Theorem~\ref{th:exist-phi} and suppose $v$
  and $\psi$ are both finite valued and satisfy the ($\Delta_2$)
  condition, i.e., there exists $x_0 > 0$ and $k\in\real$ such that
  \[
    v(2x) < k v(x) \qquad\text{and}\qquad
    \psi(2x) < k \psi(x) \qquad\forall\, x \ge x_0.
  \]
  Then, variational densities exist to \eqref{eq:gvi-phi} and
  \eqref{eq:cgvi-phi}.
\end{corollary}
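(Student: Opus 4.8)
The plan is to reduce both existence claims to the ``reflexive'' sufficient conditions already isolated in Corollaries~\ref{cor:gvi-existence2} and \ref{cor:cgvi-existence2}: in the present Orlicz pairing $\mset=L_{\psi^\#}$, $\dset=L_\psi$ it suffices to verify that (i) $\cV(\cdot)=\bbe[v(\cdot)]$ is finite valued on $\mset$, and (ii) the canonical embedding $\dset\subseteq\mset^*$ is onto, i.e.\ $\dset=\mset^*$. Given Theorem~\ref{th:exist-phi} fixes $\cV(\cdot)=\bbe[v(\cdot)]$ with $v=\phi^*$, these are the only two facts needed.

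For (i) I would argue as follows. Fix $X\in\mset=L_{\psi^\#}$; by definition of the Luxemburg norm there is $a>0$ with $\bbe[\psi^\#(|X|/a)]<\infty$, and since $\psi^\#=v$ on $[0,\infty)$ this reads $\bbe[v(|X|/a)]<\infty$. Because $\Pi$ is a probability measure, iterating the doubling inequality in the ($\Delta_2$) condition for $v$ (which is exactly the ``large argument'' version, the values of $v$ on a bounded set contributing only a finite additive constant) upgrades this to $\bbe[v(|X|)]<\infty$. Now split $\Theta$ into $\{X\ge 0\}$ and $\{X<0\}$: on the former $v(X)=v(|X|)\ge 0$ is integrable; on the latter, $v$ increasing with $v(0)=0$ together with risk aversion $v(x)\ge x$ gives $X\le v(X)\le 0$, so $|v(X)|\le|X|$, which is integrable because $\mset=L_{\psi^\#}\subseteq L^1(\Pi)$ (on a probability space an Orlicz function with $v(x)\ge x$ forces this embedding). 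Hence $\cV(X)=\bbe[v(X)]$ is finite and real for every $X\in\mset$.

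For (ii) I would invoke the classical duality theory of Orlicz spaces over a finite measure space: when the Orlicz function $\psi^\#$ and its complementary function $\psi$ are both finite valued and satisfy the ($\Delta_2$) condition at infinity, $L_{\psi^\#}$ is reflexive and the bilinear form $\langle p,X\rangle=\bbe[pX]$ identifies $(L_{\psi^\#})^*$ isometrically with $L_\psi$ \citep{rubshtein2016foundations,GAEdgar_LSucheston_1992a}. Thus $\dset=L_\psi$ coincides with $\mset^*$ under the pairing used throughout. With (i) and (ii) established, Corollary~\ref{cor:gvi-existence2} furnishes a variational density for \eqref{eq:gvi-phi}, and Corollary~\ref{cor:cgvi-existence2} (read with \eqref{eq:cgvi-phi} in place of \eqref{eq:gvi0}) furnishes one for \eqref{eq:cgvi-phi}, which is the assertion. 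As a byproduct, $\cV$ finite implies $\cR(X)\le\cV(X)<\infty$ and $\cR(X)\ge\bbe[X]>-\infty$, and likewise $\bbe[X]\le\cR^\varepsilon(X)\le\varepsilon+\cR(X)$, so both risk measures are finite hence continuous on $\mset$ \citep[Cor.~2.5]{IEkeland_RTemam_1999}, giving subdifferentiability at $M_N$.

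The main obstacle is step (ii): one must make sure that $\psi$, which vanishes on $[0,1)$, is still covered by the reflexivity/duality theorem --- it is an Orlicz function but not an $N$-function in the strict sense --- and that only the $\Delta_2$-at-infinity hypotheses are needed because $\Pi(\Theta)=1$; this is precisely the generality available in \citep{rubshtein2016foundations}. Step (i) and the appeal to the two corollaries are then routine.
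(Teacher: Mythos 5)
Your proposal is correct and follows essentially the same route as the paper: reduce to Corollaries~\ref{cor:gvi-existence2} and \ref{cor:cgvi-existence2} by checking that $\cV$ is finite valued on $\mset=L_{\psi^\#}$ and that the $(\Delta_2)$ hypotheses on $v$ and $\psi$ yield reflexivity with $\dset=L_\psi=\mset^*$ under the pairing $\langle p,X\rangle=\bbe[pX]$. The paper's proof simply asserts these two facts (the finiteness being noted ``by construction'' in Section~\ref{sect:phi-div} and Appendix~\ref{s:ap_Orlicz}), whereas you supply the details; no gap.
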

\begin{proof}
  By construction we have that $\cR$ and $\cR^\varepsilon$ are
  continuous and subdifferentiable.  Moreover, the stated assumptions
  on $v$ and $\psi$ ensure that $\dset=\mset^*$ is reflexive.  Consequently,
  solutions exist by Corollaries~\ref{cor:gvi-existence2} and
  \ref{cor:cgvi-existence2}.
\end{proof}

Corollary~\ref{cor:exist-phi} demonstrates that the GVI problems
\eqref{eq:gvi-phi} and \eqref{eq:cgvi-phi} are well-posed, under certain
assumptions on $v$ and $\psi$, for a fixed set of data $\{y_n\}_{n=1}^N$.
%We emphasize that Corollary~\ref{th:exist-phi} simply requires the
%subdifferentiability of $\cR$ at $\sigma^{-1} M_{N}$ or $\cR^\varepsilon$ at
%$M_N$, which is guaranteed if $\cR$ is finite, lower semicontinuous, and convex
%(cf.\ Corollaries~\ref{cor:gvi-existence2} and \ref{cor:cgvi-existence2}).
%\citep[Cor.~2.5]{IEkeland_RTemam_1999}, see Appendix~\ref{s:ap_Orlicz} for
%more details on the properties of $\cR^\varepsilon$.
%In particular, the Fenchel-Young inequality guarantees that
%$\bar{p}\in\partial v_{\fD_{\varepsilon}}(M_N)$ if and only if
%\[
%  v_{\fD_{\varepsilon}}(M_N) = \mathbb E[\bar{p} M_N] - v^*_{\fD_{\varepsilon}}(\bar{p}).
%\]
%In the language of convex analysis, $v_{\fD_\varepsilon}$ is the support function
%of the set $\fD_\varepsilon$ and therefore $v^*_{\fD_{\varepsilon}}$ is the
%indicator function of the set $\fD_{\varepsilon}$.  That is,
%$v_{\fD_\varepsilon}^*(p)=0$ if $p\in\fD_\varepsilon$ and
%$v_{\fD_\varepsilon}(p)=\infty$ if $p\not\in\fD_\varepsilon$. This provides the
%characterization
%$
%v_{\fD_{\varepsilon}}(M_N) = \mathbb E[\bar{p} M_N]
%$
%if and only if $\bar{p}\in\partial v_{\fD_\varepsilon}(M_N)$.
Moreover, Theorem~\ref{th:exist-phi} provides an explicit representation of the
variational densities.  It is important to note that when $v=\phi^*$ is
differentiable and $\bar{\lambda}>0$, these representations simplify to
\[
  \bar{p}(\theta) = v'\left(\frac{M_N(\theta)-\bar{t}}{\sigma}\right)
  \qquad\text{and}\qquad
  \bar{p}(\theta) = v'\left(\frac{M_N(\theta)-\bar{\mu}}{\bar{\lambda}}\right)
\]
for \eqref{eq:gvi-phi} and \eqref{eq:cgvi-phi}, respectively.

\subsection{Examples}
To illustrate the connections between disutility functions, risk measures
and CGVI, we present four common $\phi$-divergence examples.  We note that
the choice of $\phi$ should be left to the practitioner as it is application
dependent and based on their disutility function.  As such, we do not provide
recommendations on how to choose $\phi$, but rather provide discussion for four
common choices of $\phi$. 

\subsubsection{Kullback-Leibler Divergence}\label{ss:KL}

The KL divergence is generated by $\phi(t)=t\log(t)-t+1$ for $t\ge 0$, in which
case $v(t) = e^{t}-1$ is the exponential disutility function
\eqref{eq:exputil} with $r=1$.  See the solid orange line in
Figure~\ref{fig:utility-phi}. For fixed $\sigma > 0$, we can solve the
one-dimensional minimization problem in \eqref{eq:opt-t} with $\cV$
replaced by $\cV_\sigma$, which yields
$\bar{t}=\sigma\log\bbe[\exp(\sigma^{-1}M_N)]$ (and similarly
$\bar{\mu}=\bar{\lambda}\log\bbe[\exp(\bar{\lambda}^{-1}M_N)]$).
The associated risk measure $\cR$ is the entropic risk measure and
$\cR^\varepsilon$ is the entropic value-at-risk \citep{AAhmadiJavid_2012a}.
Given $\sigma > 0$ or the optimal $\bar{\lambda} > 0$ from \eqref{eq:cgvi}, we
can compute the GVI and CGVI densities as
\[
  \bar{p}(\theta)
   = \frac{\exp\left(\sigma^{-1}M_N(\theta)\right)}
  {\bbe\left[\exp\left(\sigma^{-1}M_N\right)\right]}
  \qquad\text{and}\qquad
  \bar{p}(\theta)
   = \frac{\exp\left(\bar{\lambda}^{-1}M_N(\theta)\right)}
  {\bbe\left[\exp\left(\bar{\lambda}^{-1}M_N\right)\right]},
\]
respectively. See Figure~\ref{fig:compare-phi}(a) for an example of the CGVI
density. Note that the GVI and CGVI densities are exactly the Gibbs posterior
with learning rate $\sigma^{-1}=\bar{\lambda}^{-1}$ for CGVI.
%As in Remark~\ref{rem:map}, the mode computation \eqref{eq:map} simplifies to
%\begin{equation}\label{eq:MAP_KL}
%  \max_{\theta \in \Theta} \; \{M_N(\theta) + \bar{\lambda} \log\pi(\theta)\}.
%\end{equation}
%Here, note that the optimal $\bar{\lambda}$ plays the role of a regularization
%parameter.
Given the relation between KL-divergence-based GVI and the Gibbs posterior, it
is natural to ask if there exists an $\varepsilon$ that produces the Bayesian
posterior when $M_N$ is a log-likelihood function.  The following result
addresses this question.

\begin{proposition}[Optimality of Bayesian Posterior]\label{prop:bayes_eps}
  %Consider the setting of Remark~\ref{rem:map}.
  Suppose the pay-off function $M_N$ is a log-likelihood function
  with $M_N\in L_{\psi^\#}$ and set
  \[
    \varepsilon=D_{\rm KL}(\exp(M_N)/\bbe[\exp(M_N)]\| \pi).
  \]
  Then, the CGVI variational density $\bar{p}$ with $\Phi$ given by
  the KL divergence is the usual Bayesian posterior. In particular,
  $\bar{\lambda} = 1$.
\end{proposition}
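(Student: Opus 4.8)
The plan is to reduce everything to the single assertion $\bar\lambda=1$, after which the claim is immediate. By Theorem~\ref{th:gen-exist} specialized to the KL divergence (where $v(t)=e^t-1$ and $\cV(X)=\bbe[e^X-1]$; see Section~\ref{ss:KL}), the CGVI variational density is
\[
  \bar p(\theta)=\frac{\exp(\bar\lambda^{-1}M_N(\theta))}{\bbe[\exp(\bar\lambda^{-1}M_N)]},
\]
where $(\bar\mu,\bar\lambda)$ solve the two-dimensional program on the right-hand side of \eqref{eq:cgvi} and $\bar\mu=\bar\lambda\log\bbe[\exp(\bar\lambda^{-1}M_N)]$ is forced by $\bbe[\bar p]=1$. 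On the other hand, when $M_N$ is a log-likelihood the ordinary Bayesian posterior, written as a Radon--Nikodym derivative with respect to the prior $\Pi$, is precisely $p_{\rm post}(\theta)=\exp(M_N(\theta))/\bbe[\exp(M_N)]$. Hence $\bar p=p_{\rm post}$ if and only if $\bar\lambda=1$.

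To locate $\bar\lambda$ I would first eliminate $\mu$ from \eqref{eq:cgvi}: as in \eqref{eq:opt-t} and Section~\ref{ss:KL}, minimizing the dual objective over $\mu\in\real$ at fixed $\lambda>0$ leaves $h(\lambda)\eqdef\lambda\varepsilon+\cR_\lambda(M_N)=\lambda\varepsilon+\lambda\log\bbe[\exp(\lambda^{-1}M_N)]$ (the boundary value $\lambda=0$ corresponds to $\fD_0=\{1\}$ and is not optimal here), so the dual problem is $\min_{\lambda>0}h(\lambda)=\cR^\varepsilon(M_N)$ by \eqref{eq:eps-risk}. Since $\cR(X)=\log\bbe[e^X]$ is convex and $\cR_\lambda$ is its perspective, $\lambda\mapsto\cR_\lambda(M_N)$ is convex on $(0,\infty)$, and adding the linear term $\lambda\varepsilon$ keeps $h$ convex; therefore an interior stationary point of $h$ is a global minimizer, and it suffices to verify $h'(1)=0$.

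Differentiating under the expectation,
\[
  h'(\lambda)=\varepsilon+\log\bbe[\exp(\lambda^{-1}M_N)]-\frac{1}{\lambda}\,\frac{\bbe[M_N\exp(\lambda^{-1}M_N)]}{\bbe[\exp(\lambda^{-1}M_N)]},
\]
so $h'(1)=\varepsilon+\log\bbe[e^{M_N}]-\bbe[M_N e^{M_N}]/\bbe[e^{M_N}]$. It remains to insert the prescribed $\varepsilon$: with $p_{\rm post}=e^{M_N}/\bbe[e^{M_N}]$ and $\bbe[p_{\rm post}]=1$, the definition $\varepsilon=D_{\rm KL}(p_{\rm post}\|\pi)=\bbe[p_{\rm post}\log p_{\rm post}]$ reads $\varepsilon=\bbe[M_N e^{M_N}]/\bbe[e^{M_N}]-\log\bbe[e^{M_N}]$, and substituting this into the previous display gives $h'(1)=0$. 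Hence $\bar\lambda=1$, $\bar\mu=\log\bbe[e^{M_N}]$, and $\bar p=p_{\rm post}$. (Equivalently, one can avoid differentiating $h$ and simply observe that $p_{\rm post}\in\fD_\varepsilon$ and $\bbe[p_{\rm post}M_N]=h(1)$; the strong duality in \eqref{eq:cgvi} then certifies both primal optimality of $p_{\rm post}$ and $\bar\lambda=1$.)

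The main obstacle is the analytic bookkeeping, not the algebra. One must check that $\bbe[\exp(\lambda^{-1}M_N)]$ and $\bbe[M_N\exp(\lambda^{-1}M_N)]$ are finite for $\lambda$ near $1$, so that $h$ is finite and differentiable there and differentiation under the expectation is justified, and that the infimum defining $\cR^\varepsilon(M_N)$ is attained in $(0,\infty)$ rather than escaping toward $0$ or $+\infty$. This is exactly where the hypothesis $M_N\in L_{\psi^\#}$ --- membership in the exponential Orlicz space generated by $\psi^\#=v$ --- and the (implicit) finiteness of $\varepsilon=D_{\rm KL}(p_{\rm post}\|\pi)$, equivalently $p_{\rm post}\in\dset=L_\psi$, come in; once these are granted, convexity of $h$ and the identity $h'(1)=0$ complete the proof.
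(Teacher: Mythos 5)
Your proposal is correct and follows essentially the same route as the paper: eliminate $\mu$ via $\bar\mu=\bar\lambda\log\bbe[\exp(\bar\lambda^{-1}M_N)]$, differentiate the scalar function $\lambda\mapsto\lambda\varepsilon+\lambda\log\bbe[\exp(\lambda^{-1}M_N)]$, and verify that the prescribed $\varepsilon$ makes $\lambda=1$ stationary, hence optimal, recovering the Bayesian posterior. You simply carry out explicitly (convexity of $h$, the value of $h'(1)$, the integrability caveats) what the paper's terse proof leaves implicit.
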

\begin{proof}
  This result follows by first differentiating the scalar function
  \[
    \lambda\mapsto \lambda\varepsilon + \lambda
      \log\bbe[\exp(\lambda^{-1}M_N)].
  \]
  Since $\lambda=1$ is strictly interior in the set $[0,\infty)$,
  the desired optimality conditions consist of setting the aforementioned
  derivative to zero and setting $\lambda=1$.  In doing so, we see that
  $\varepsilon$ must be as stated and we recover the Bayesian
  posterior as $\bar{p}$.
\end{proof}

\subsubsection{$\chi^2$ Divergence}\label{ss:chi2}

For the $\chi^2$-divergence, $\phi(t) = (t-1)^2$ for $t\ge 0$ and
$\fD_\varepsilon$ can be equivalently rewritten as
\[
  \fD_\varepsilon = \{\, p\in\fD\,\vert\,
    \bbe[(p-1)^2] \le \varepsilon\,\}
   = \{\, p\in\fD \,\vert\, \bbe[p^2] \le 1+\varepsilon\,\}.
\]
This follows from expanding the quadratic $\bbe[(p-1)^2]$ and noting that
$\bbe[p]=1$.  The set $\fD_\varepsilon$ defines the second-order higher moment
coherent risk measure (see, e.g., \S~8.2 in \cite{PCheridito_TLi_2008a}) with
confidence level $\beta = 1 - (1+\varepsilon)^{-\frac{1}{2}}\in[0,1]$ and
therefore
\begin{equation}\label{eq:HMCR}
  \cR^\varepsilon(M_N) = \min_{a\in\real}\left\{a
    + (1+\varepsilon)^{\frac{1}{2}}
    \bbe[\max\{0,M_N-a\}^2]^{\frac{1}{2}}\right\}.
\end{equation}
Hence, given an optimal $\bar{a}$ from \eqref{eq:HMCR}, we can recover the
optimal density as
\[
  \bar{p}(\theta) = \frac{\max\left\{0,M_N(\theta) - \bar{a}\right\}}
  {\bbe\left[\max\left\{0,M_N - \bar{a}\right\}\right]}.
\]
This follows from \citep[Prop.~3.2]{PCheridito_TLi_2008a}.  See
Figure~\ref{fig:compare-phi}(b) for an example of this density. An interesting
property of the $\chi^2$-divergence (when compared with our other examples) is
that the corresponding CGVI truncates the support of the posterior to account for
$M_N(\theta) \ge \bar{a}$.  Moreover, the associated disutility function for the
$\chi^2$-divergence is the truncated quadratic disutility function
\[
  v(x) = \left\{\begin{array}{ll}
           \tfrac{1}{4}x^2+x & \text{if $x \ge -2$} \\
           -1 & \text{if $x<-2$}
         \end{array}\right.
  \qquad\implies\qquad
  v'(x) = \max\{0,\tfrac{1}{2}x+1\}
\]
and so the GVI density is given by
\[
  \bar{p}(\theta) = \frac{\max\{0,M_N(\theta)-(\bar{t}-2\sigma)\}}{\bbe[\max\{0,M_N-(\bar{t}-2\sigma)\}]}.
\]
See the dashed blue line in Figure~\ref{fig:utility-phi}.
%In the setting of Remark~\ref{rem:map}, we can compute the posterior mode by
%solving the maximization problem
%\[
%  \max_{\theta \in \Theta}\;\{\log\max\{0,M_N(\theta) - \bar{t}\} + \log\pi(\theta)\}.
%\]
%Notice that the objective function is equal to $-\infty$ if
%$M_N(\theta)\le\bar{t}$. Consequently, we can equivalently reformulate this
%problem as the constrained problem
%\begin{equation*}\label{eq:MAP_CS}
%  \max_{\theta \in \Theta}\;\left\{\log(M_N(\theta) - \overline{t}) + \log\pi(\theta)\right\}
%   \qquad\operatorname{subject\;to}\qquad M_N(\theta) \ge \overline{t}.
%\end{equation*}

%
%The later then requires us to solve the optimization problem
%\begin{equation}\label{eq:MAP_KL}
%\max_{\theta \in \Theta} \; M_N(\theta) + \overline{\lambda} \ln(\pi(\theta)).
%\end{equation}
%Therefore, the optimal $\overline{\lambda}$ is the regularization parameter 
%used in the calculation of the MAP point. 
%
%If, for example, $\pi$ is log-concave, each $y_i = (x_i,z_i)$ for $i=1,\dots, N$, and $M_N(\theta)$ takes the form
%\[
%M_N(\theta) = -\frac{1}{N} \sum_{i=1}^N \frac{1}{2} \|z_i - G(x_i,\theta)\|^2_{\ell^2}
%\]
%Then we obtain the following familiar least-squares problem
%\begin{equation}\label{eq:MAP_KL_LS}
%\max_{\theta \in \Theta} \; 
%-\frac{1}{N} \sum_{i=1}^N \frac{1}{2} \|z_i - G(x_i,\theta)\|^2_{\ell^2} + 
%\overline{\lambda} \psi(\theta),
%\end{equation}
%where $\ln(\pi(\theta)) = \psi(\theta)$ and $\psi$ is a concave function.

\subsubsection{R\'{e}nyi Divergence}

The R\'{e}nyi divergence \citep{ARenyi_1961a} is given by
\[
  D_\alpha(p\|\pi)
   \eqdef \frac{1}{\alpha-1}\log\bbe[p^\alpha], \quad\alpha > 0, \;\alpha\neq 1
\]
and $D_\alpha(p\|\pi)$ is the KL divergence when $\alpha = 1$.  The
R\'{e}nyi divergence is a popular alternative to the KL divergence because
as demonstrated in
\citep{JKnoblauch_JJewson_TDamoulas_2019,knoblauch2019frequentist}, it often
outperforms the KL divergence for GVI problems with misspecified priors.
We restrict $\alpha$ to satisfy $0 < \alpha < 1$.
After a sequence of invertible transformations, we can equivalently write
the bound $D_\alpha(p\|\pi)\le\varepsilon$ as
\[
  \frac{1}{\alpha-1}\bbe[p^\alpha - \alpha p +(\alpha-1)]
    \le \frac{1-\exp((\alpha-1)\varepsilon)}{1-\alpha} =:\varepsilon_\alpha,
\]
producing a $\phi$-divergence on the left-hand side with
$\phi(t) = (t^\alpha-\alpha t+(\alpha-1))/(\alpha-1)$ for $t \ge 0$
and $v(t) = (1+t/\beta)^\beta - 1$ where
$\beta=\frac{\alpha}{\alpha-1}<0$.  This $\phi$-divergence is sometimes
called the $\alpha$-divergence.  The associated disutility function is the
isoelastic disutility function \eqref{eq:isoutil}. See the dotted green line in
Figure~\ref{fig:utility-phi}.
%In this case, \eqref{eq:MLE-phi-dual} can be explicitly written as
%\[
%  \inf_{\mu\in\real,\,\lambda > 0}
%    \left\{\mu + \lambda(\varepsilon_\alpha-1)
%      + \lambda\bbe\left[\left(1+\frac{M_N-\mu}{\lambda\beta}\right)^\beta
%      \right]\right\}.
%\]
Given the optimal $\bar{t},\,\bar{\mu}\in\real$ and $\bar{\lambda} > 0$, we can
then compute the GVI and CGVI densities $\bar{p}$ as
\[
  \bar{p}(\theta)
   = \left(1+\frac{M_N(\theta)-\bar{t}}{\beta\sigma}\right)^{\beta-1}
  \qquad\text{and}\qquad
  \bar{p}(\theta)
   = \left(1+\frac{M_N(\theta)-\bar{\mu}}{\beta\bar{\lambda}}\right)^{\beta-1},
\]
respectively.  See Figure~\ref{fig:compare-phi}(c) for an example of the CGVI
density.
%In the context of Remark~\ref{rem:map}, the mode optimization problem \eqref{eq:map}
%simplifies to
%\[
%  \max_{\theta \in \Theta} \; \{ (\beta - 1) 
%    \log\left( \beta \bar{\lambda} + M_N(\theta)-\bar{\mu} \right) + \log\pi(\theta)\}.
%\]

%Using again the example in \eqref{eq:MAP_KL_LS} and \eqref{eq:MAP_CS_NLP} we obtain the nonlinear program
%\begin{equation}\label{eq:MAP_RY_NLP}
%\max_{\theta \in \Theta} \; 
%(\beta - 1) 
%\ln\left[
%\beta \overline{\lambda} 
%-\frac{1}{N} \sum_{i=1}^N \frac{1}{2} \|z_i - G(x_i,\theta)\|^2_{\ell^2} -
%\bar{\mu}
%\right] + \psi(\theta).
%\end{equation}

\subsubsection{Total-Variation Distance}\label{s:tv}

For the total variation distance, we set $\phi(t) = |t-1|$ for $t\ge 0$.
As shown in \citep{AShapiro_2017a}, the associated disutility function is
\[
  v(t) = \left\{\begin{array}{ll}
     \max\{0,t+1\} - 1 & \text{if $t\le 1$} \\
     +\infty & \text{if $t > 1$}
  \end{array}\right.
\]
See the dot-dashed yellow line in Figure~\ref{fig:utility-phi}. The Orlicz
spaces generated by $\phi^*$ are $L_{\psi}=L^1(\Theta,\mathfrak{B},\Pi)$ and
$L_{\psi^\#}=L^\infty(\Theta,\mathfrak{B},\Pi)$. As we will see,
total-variation-based GVI and CGVI densities may not exist because the
associated risk identifiers are measures that are not absolutely continuous
with respect to $\Pi$ in general.
%For example, by \citep[Prop.~6.6]{AShapiro_DDentcheva_ARuszczynski_2014a}, we
%have that $\cR^\varepsilon$ is continuous and subdifferentiable.  However,
%the subdifferential of $\cR^{\varepsilon}$ generally contains measures that
%are not absolutely continuous with respect to $\Pi$.
%In this case, we have that
%\[
%  v_{\fD_\varepsilon}(M_N) = \bbe_P[M_N]
%    \quad\forall\,P\in \partial v_{\fD_\varepsilon}(M_N)\subset
%    L^\infty(\Theta,\cB,\Pi)^*.
%\]
%As the notation suggests, the maximizing $P$ need not be
%absolutely continuous with respect to the prior $\Pi$.  Therefore, the
%estimation problem \eqref{eq:MLE-phi} need not have a solution in
%$L^1(\Theta,\cB,\Pi)$, unless we enlarge the feasible set
%$\fD_\varepsilon$ appropriately.

We first consider the total-variation-based CGVI problem. Upon the change of
variables $s=\mu+\lambda\in\real$ and $t=\mu-\lambda\in\real$, we can
equivalently rewrite \eqref{eq:cgvi} as
\begin{align}
  \min_{s,\,t\in\real}&\left\{\tfrac{\varepsilon}{2}s
    + (1-\tfrac{\varepsilon}{2})t
    + \bbe[\max\{0,M_N-t\}]\right\}
    \quad\text{subject to}\quad \esssup M_N \le s \nonumber\\
  &=\tfrac{\varepsilon}{2}\esssup M_N
    + \min_{t\in\real}\left\{(1-\tfrac{\varepsilon}{2})t
    + \bbe[\max\{0,M_N-t\}]\right\},\label{eq:tv}
\end{align}
where the optimal $s$ is $\bar{s}=\esssup M_N$ and the optimal
$t$ solves the optimization problem in \eqref{eq:tv}.
Given $\bar{t}$, we recover $\bar{\mu}$ and $\bar{\lambda}$ as
\[
  \bar{\mu} = \tfrac{1}{2}(\esssup M_N+\bar{t})
  \qquad\text{and}\qquad
  \bar{\lambda} = \tfrac{1}{2}(\esssup M_N - \bar{t}).
\]
Notice that if $\varepsilon \ge 2$, then \eqref{eq:tv} is simply
$\esssup M_N$ and the optimal probability measures are convex combinations
of point masses $\delta_{\theta^\star_N}$ centered at solutions
$\theta^\star_N\in\Theta$ to \eqref{eq:intro-0} (here, $\delta_\theta$ denotes
the usual Dirac measure, i.e., for any $S\in\cB$, $\delta_\theta(S) = 1$ if
$\theta\in S$ and $\delta_\theta(S)=0$ if $\theta\not\in S$).  On the other
hand, if $0<\varepsilon < 2$, then \eqref{eq:tv} can be equivalently rewritten
as
\[
  \frac{\varepsilon}{2}\esssup M_N
   + \left(1-\frac{\varepsilon}{2}\right)
     \mathrm{AVaR}_{\varepsilon/2}\left(M_N\right),
\]
where the average value-at-risk (AVaR) is defined by
\[
  \mathrm{AVaR}_\beta(X) \eqdef \frac{1}{1-\beta}\int_\beta^1 q_\alpha(X)\,\mathrm{d}\alpha
    = q_\beta(X) + \frac{1}{1-\beta}\bbe[\max\{0,X-q_\beta(X)\}]
\]
for $0 < \beta < 1$ and $q_\alpha(X)$ is the $\alpha$-quantile of the
random variable $X$ \citep{RTRockafellar_SUryasev_2002a}. In this case, the
optimal $t$ is $\bar{t}=q_{\varepsilon/2}(M_N)$.
Moreover, the subdifferential $\partial\cR^\varepsilon(M_N)$ is
generally set-valued and contains subgradients of the form, e.g.,
\[
  \frac{\varepsilon}{2}\delta_{\theta^\star_N}
    + \left(1-\frac{\varepsilon}{2}\right)P_0,
\]
where $P_0\in\fP$ is absolutely continuous with respect to $\Pi$ and has
density $p_0\in\fD$ satisfying
\[
  p_0(\theta) \in \frac{1}{1-\frac{\varepsilon}{2}}
  \left\{\begin{array}{ll}
    \{0\} & \text{if $M_N(\theta) < \bar{t}$} \\
    \lbrack 0,1\rbrack & \text{if $M_N(\theta) = \bar{t}$} \\
    \{1\} & \text{if $M_N(\theta) > \bar{t}$}
  \end{array}\right..
\]
In particular, if $\Pi(\{\theta\in\Theta\,\vert\,
  M_N(\theta)=q_{\varepsilon/2}(M_N)\})=0$, then
\[
  p_0(\theta) = \frac{1}{1-\frac{\varepsilon}{2}}
    \mathbbm{1}_{[0,\infty)}(M_N(\theta) - \bar{t}).
\]
%where $\mathbbm{1}_{[0,\infty)}(x) = 1$ if $x\ge 0$ and
%$\mathbbm{1}_{[0,\infty)}(x) = 0$ if $x < 0$.
See Figure~\ref{fig:compare-phi}(d) for an example of this ``density''.
Unfortunately, the relationship between CGVI and AVaR is not present for the
total-variation-based GVI problem \eqref{eq:gvi-phi}.  In this setting, we
can make the substitution $s=t+\sigma$ in \eqref{eq:opt-t} to rewrite the
GVI problem as
\[
  \min_{s\in\real} \; \{ s + \bbe[\max\{0,M_N-s\}]\}
  \qquad\text{subject to}\qquad
  \esssup M_N - 2\sigma \le s.
\]
Clearly, if $\tfrac{1}{2}(\esssup M_N - \essinf M_N) \le \sigma$, then we can
set $\bar{s}=\essinf M_N$ and $\cR_\sigma(M_N) = \bbe[M_N]$.  In this case, the
variational density is $\bar{p}\equiv 1$. The more interesting case occurs when
$\tfrac{1}{2}(\esssup M_N - \essinf M_N) > \sigma$, which implies
$\essinf M_N < \esssup M_N-2\sigma\le \bar{s}\le \esssup M_N$. Consequently,
$\bbe[M_N] \le \cR_\sigma(M_N) \le \esssup M_N$.  Moreover, we have that
$\sigma\mapsto\cR_\sigma(M_N)$ is monotonically decreasing, attaining its
lower bound when $\sigma=\tfrac{1}{2}(\esssup M_N - \essinf M_N)$ and
its upper bound when $\sigma = 0$.

%Given the nature of $\bar{p}$, there is no useful optimization problem
%to compute the posterior mode as in Remark~\ref{rem:map}.
%However, when using sample-based approximation, the total variation CGVI has a
%computational advantage over the previous examples in that no optimization
%problem needs to be solved to obtain the CGVI. Let $\{\theta_s\}_{s=1}^S$
%be samples from the prior $\Pi$. When $\varepsilon > 2$, we can simply sort the
%pay-off values $\{M_N(\theta_s)\}_{s=1}^S$ to determine the maximizing
%$\theta^{\star}_s$. If there is more than one, the CGVI is any convex
%combination of the associated point masses. On the other hand, when
%$0<\varepsilon < 2$, we additionally compute the
%$\varepsilon/2$-quantile of the (already sorted) sampled pay-off values, which
%provides the parameter $\bar{t}$ and the corresponding density $p_0$.

\begin{figure}[!ht]
  \centering
  \includegraphics[width=\textwidth]{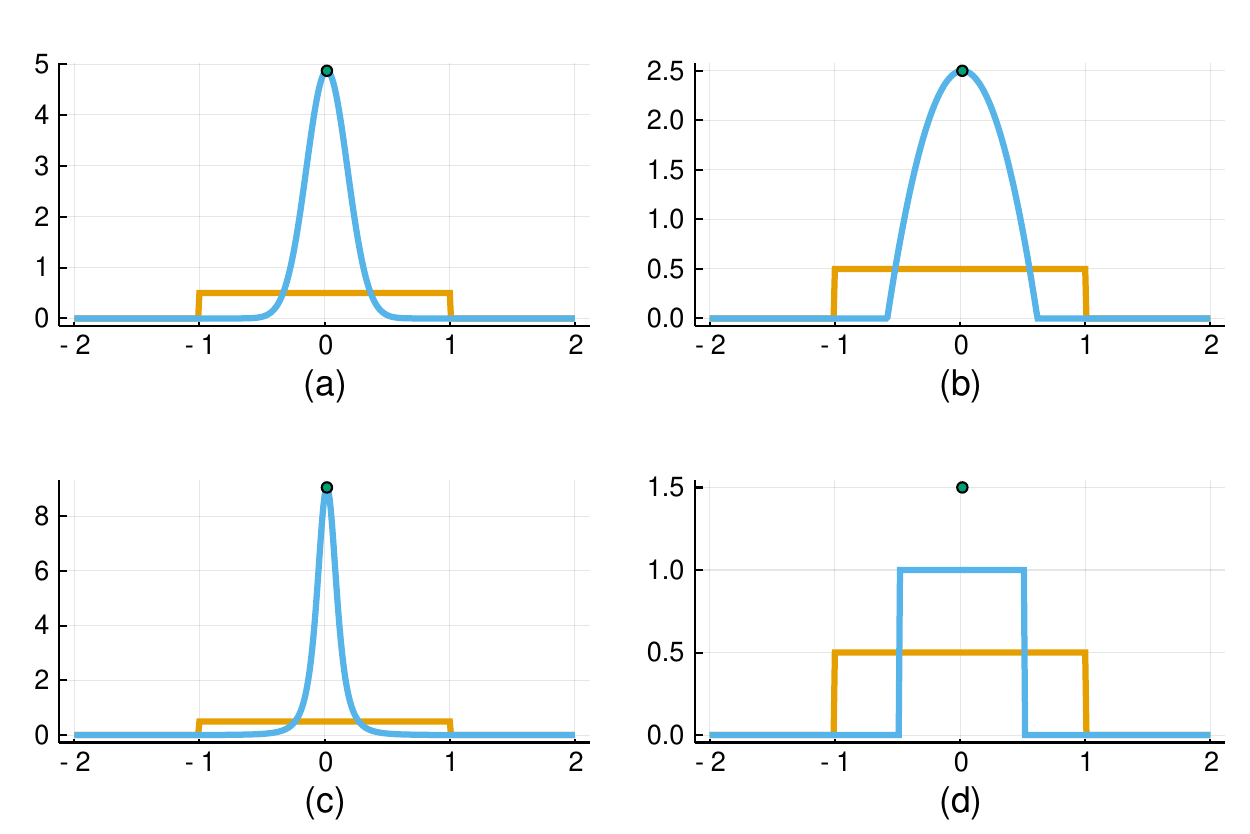}
  \caption{$\phi$-divergence CGVI densities for (a) KL, (b) $\chi^2$,
    (c) R\'{e}nyi ($\alpha=\tfrac{1}{2}$), and (d) total variation.  In
    this example, $\Upsilon=\Theta=\real$, 100 realizations of $Y$ were
    drawn from a standard normal distribution, $\Pi$
    is the uniform measure on $[-1,1]$, $m$ is the standard normal
    log-likelihood function, and $\varepsilon=1$.}
  \label{fig:compare-phi}
\end{figure}

\section{Computation, Sampling and Asymptotic Analysis}\label{sec:comp}

In this section, we investigate several theoretical properties that are
relevant for the computation of the $\phi$-divergence-based CGVI variational
densities analyzed in Subsection~\ref{sect:phi-div} and note that it may be
possible to extend these results to general $\Phi$ using empirical estimates
for law-invariant functionals
(cf.~\citep[Ch.~7.2.6]{AShapiro_DDentcheva_ARuszczynski_2014a}). In practice,
we can only expect to solve the one-dimensional GVI \eqref{eq:gvi-phi} and
two-dimensional CGVI \eqref{eq:cgvi-phi} problems approximately by either
stochastic approximation algorithms or by empirical approximation using samples
from the prior distribution $\Pi$. The latter is often referred to as sample
average approximation (SAA) in stochastic programming. This raises important
questions regarding the asymptotic behavior of the sample-based solutions as
the sample size increases to infinity.

Given $S$ independent and identically distributed (iid) samples
$\{\theta_s\}_{s=1}^S$ drawn from $\Pi$, we evaluate the pay-off samples
$\{M_N^s\eqdef M_N(\theta_s)\}_{s=1}^S$ and then solve the SAA problem
\begin{equation}\label{eq:SAA}
  \widehat{\cR}^{\varepsilon}_{S}(M_N) \eqdef \inf_{\mu\in\real,\;\lambda\ge 0}\left\{\,
     \mu + \lambda\varepsilon
         + \frac{1}{S}\sum_{s=1}^S(\lambda\phi)^*(M_N^s - \mu)\right\}.
\end{equation}
We denote the set of minimizers to \eqref{eq:SAA} by $\widehat{X}^{\star}_{S}$
and the minimizers to \eqref{eq:cgvi-phi} by $X^\star$.
The optimization problem \eqref{eq:SAA} is a two-dimensional convex
optimization problem that can be solved using any convex
programming methods.  For example, one can employ proximal or projected
(sub)gradient-type methods.  If $\phi^*$ is sufficiently differentiable, one
can employ Newton-type methods.  See \citep{ABeck_2017} for a survey of other
applicable first-order methods. Additionally, under appropriate assumptions,
one can prove that the estimators $\mu_S$ and $\lambda_S$ computed by solving
\eqref{eq:SAA} converge a.s.\ to a solution of \eqref{eq:cgvi-phi}, see e.g.,
\citep{JDupacova_RJBWets_1988a,AJKing_RTRockafellar_1993a,AShapiro_1989a} and
the result below. Moreover, when $\bar{\lambda}>0$,
\citep[Prop.~2.1.5(b)]{FHClarke_1983} ensures that any
$\sigma(L_\psi,L_{\psi^\#})$-accumulation point $\bar{p}$ of the sequence
$\{p_S\}$ with
$
  p_S(\theta) \in \lambda_{S}\partial\phi^*((M_N(\theta) - \mu_S)/\lambda_S),
$
satisfies \eqref{eq:subdiff} a.s. 

We emphasize here that there are no limitations on the dimensions of $\Theta$.
In particular, the size of $\theta$ does not change the computational
complexity of solving the two-dimensional convex optimization problem
\eqref{eq:SAA} since the pay-off samples $M_N(\theta_s)$ are computed offline,
prior to solving \eqref{eq:SAA}. However, it is important to note that the
number of samples $S$ required to achieve a prescribed accuracy when solving
the SAA problem \eqref{eq:SAA} is typically dimension dependent
\citep[Chap.~5]{AShapiro_DDentcheva_ARuszczynski_2014a}. Once $\bar{\lambda}$
and $\bar{\mu}$ are computed, we can generate samples from the CGVI
distribution $\bar{p}$ using similar methods as those used in Bayesian
inference such as Markov Chain Monte Carlo (MCMC)
\citep{ABeskos_MGirolami_SLan_PEFarrell_AMStuart_2017} and transport maps
\citep{TAELMoselhy_YMMarzouk_2012}.  We also note that for many choices of
$\phi$, the density $\bar{p}$ is given in closed form once $\bar{\lambda}$
and $\bar{\mu}$ are computed.  This allows us to compute various statistics
such as the value of $\theta$ that maximizes $\bar{p}$.

In the next result, we prove asymptotic consistency of the optimal values and
optimal solution set of  \eqref{eq:SAA} in the large sample limit (i.e.,
$S\to\infty$). These statements follow by verifying the assumptions of
\citep[Th.~5.4]{AShapiro_DDentcheva_ARuszczynski_2014a}.  We prove the various
necessary conditions in Appendix~\ref{ap:consistency}
and then obtain the asymptotic consistency as a corollary.
We refer the reader to \citep[Chap.~14]{RTRockafellar_RJBWets_1998a} or
\citep[Chap.~7]{AShapiro_DDentcheva_ARuszczynski_2014a} regarding the
terminology.
%Note that random lower semicontinuous integrands are called normal
%integrands in \citep{RTRockafellar_RJBWets_1998a}.
\begin{theorem}[Asymptotic Consistency: Sample Limit]\label{cor:asymp}
If the asumptions of Corollary~\ref{th:exist-phi} hold, then
\begin{enumerate}
\item $\widehat{\cR}^{\varepsilon}_{S}(M_N) \to \cR^{\varepsilon}(M_N)$ a.s.;
\item The deviation of $\widehat{X}^{\star}_{S}$ to $X^{\star}$ converges
      to zero a.s. That is, we have that
 \[
   \mathbb D(\widehat{X}^{\star}_{S},X^{\star}) \eqdef \sup_{(\mu,\lambda)
   \in \widehat{X}^{\star}_{S}} \inf_{(\mu',\lambda')
   \in X^{\star}} \| (\mu,\lambda) - (\mu',\lambda')\| \to 0 \quad \text{a.s.}
 \]
\end{enumerate}
\end{theorem}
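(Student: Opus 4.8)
\emph{Proof plan.}
The idea is to regard \eqref{eq:SAA} as the sample average approximation (SAA) of the two-dimensional convex program on the right-hand side of \eqref{eq:cgvi} and to deduce both assertions from the SAA consistency theorem \citep[Th.~5.4]{AShapiro_DDentcheva_ARuszczynski_2014a}; the substance of the argument is the verification of that theorem's hypotheses, which I would place in Appendix~\ref{ap:consistency}. Introduce the random integrand
\[
  f\big((\mu,\lambda),\theta\big) \eqdef \mu + \lambda\varepsilon + (\lambda\phi)^*\big(M_N(\theta)-\mu\big),
\]
so that $\widehat{\cR}^{\varepsilon}_{S}(M_N)=\inf_{(\mu,\lambda)}\tfrac1S\sum_{s=1}^S f((\mu,\lambda),\theta_s)$ and $\cR^\varepsilon(M_N)=\inf_{(\mu,\lambda)}F(\mu,\lambda)$, where $F(\mu,\lambda)\eqdef\bbe[f((\mu,\lambda),\cdot)]$. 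I would exploit the identity $(\lambda\phi)^*(y)=\lambda\phi^*(y/\lambda)=\lambda v(y/\lambda)$ for $\lambda>0$, together with $(0\cdot\phi)^*=\delta_{(-\infty,0]}$, so that $f(\cdot,\theta)$ is finite on $\{\lambda>0\}$ and degenerates into the indicator of $\{M_N(\theta)\le\mu\}$ on the boundary $\lambda=0$.

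First I would check that $f$ is a convex random lower semicontinuous integrand. Joint convexity of $(\lambda,y)\mapsto(\lambda\phi)^*(y)$ is immediate since it is a supremum of maps affine in $(\lambda,y)$; composing with an affine map in $(\mu,\lambda)$ shows $f(\cdot,\theta)$ is convex, and $\cB$-measurability in $\theta$ follows from that of $M_N$ and lower semicontinuity of $(\lambda\phi)^*$. Consequently $\widehat{\cR}^{\varepsilon}_{S}$ and $F$ are convex. Next I would establish pointwise integrability: for fixed $(\mu,\lambda)$ with $\lambda>0$, the bound $v\ge-\phi(0)$ controls $f((\mu,\lambda),\cdot)$ from below, while $M_N\in\mset=L_{\psi^\#}$ with $v=\psi^\#$, together with the $(\Delta_2)$ property of $v$ from Corollary~\ref{cor:exist-phi}, gives $\bbe\big[v(c|M_N|)\big]<\infty$ for every $c>0$, whence $f((\mu,\lambda),\cdot)\in L^1(\Theta,\cB,\Pi)$. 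The strong law of large numbers then yields $\tfrac1S\sum_{s=1}^S f((\mu,\lambda),\theta_s)\to F(\mu,\lambda)$ a.s.\ for each such $(\mu,\lambda)$; since the $\widehat{\cR}^{\varepsilon}_{S}$ are convex and converge pointwise a.s.\ on the open dense set $\{\lambda>0\}$ to the proper convex limit $F$, this upgrades automatically to a.s.\ epiconvergence of $\widehat{\cR}^{\varepsilon}_{S}$ to $F$, equivalently locally uniform convergence on $\operatorname{int}\mathrm{dom}\,F$ \citep[Ch.~7]{AShapiro_DDentcheva_ARuszczynski_2014a}. The boundary $\lambda=0$ is reconciled by observing that $F(\mu,0)=\mu$ for $\mu\ge\esssup M_N$ (and $+\infty$ otherwise), that its SAA analogue equals $\mu$ for $\mu\ge\max_{s\le S}M_N^s$, and that $\max_{s\le S}M_N^s\uparrow\esssup M_N$ a.s.

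Finally I would argue inf-compactness. If $\varepsilon=0$ then $\fD_0=\{1\}$ and the conclusions reduce to the law of large numbers, so assume $\varepsilon>0$. From $(\lambda\phi)^*(y)\ge\max\{y,\,-\lambda\phi(0)\}$ I obtain $F(\mu,\lambda)\ge\lambda\varepsilon+\bbe[M_N]$ and $F(\mu,\lambda)\ge\mu+\lambda\varepsilon-\lambda\phi(0)$, which control $\lambda\to+\infty$ and $\mu\to+\infty$; the remaining direction $\mu\to-\infty$ is handled by the growth of $y\mapsto(\lambda\phi)^*(y)$ as $y\to+\infty$, the precise estimate being carried out in Appendix~\ref{ap:consistency}. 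Thus $F$ is level-bounded, so (being lower semicontinuous and convex) $X^\star=\arg\min F$ is nonempty and compact, consistent with the attainment asserted in \eqref{eq:cgvi} and with Corollary~\ref{cor:exist-phi}. Epiconvergence of convex functions to a level-bounded convex limit then forces the sublevel sets of $\widehat{\cR}^{\varepsilon}_{S}$, and a fortiori $\widehat{X}^{\star}_{S}$, to lie in a common compact set for all large $S$, a.s.; in particular $\widehat{X}^{\star}_{S}\neq\emptyset$ for large $S$. Invoking \citep[Th.~5.4]{AShapiro_DDentcheva_ARuszczynski_2014a} then delivers $\widehat{\cR}^{\varepsilon}_{S}(M_N)\to\cR^\varepsilon(M_N)$ a.s.\ and $\mathbb{D}(\widehat{X}^{\star}_{S},X^{\star})\to0$ a.s.

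The step I expect to be the main obstacle is reconciling the boundary $\lambda=0$ with level-boundedness: as $\lambda\downarrow0$ the conjugate $(\lambda\phi)^*$ collapses to the indicator of a half-line, so one must show that the companion limit $\max_{s\le S}M_N^s\to\esssup M_N$ keeps the SAA minimizers from escaping to the boundary while the true minimizers remain confined to a fixed compact set. A close second is the uniform (over neighborhoods of each point) $L^1(\Pi)$-domination of $f$ demanded by \citep[Th.~5.4]{AShapiro_DDentcheva_ARuszczynski_2014a}; this is exactly where the $(\Delta_2)$ and Orlicz hypotheses inherited from Corollary~\ref{cor:exist-phi} are essential, and establishing them is the technical core of Appendix~\ref{ap:consistency}.
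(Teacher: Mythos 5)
Your plan is essentially the paper's own proof: the paper likewise reduces the statement to a direct application of \citep[Th.~5.4]{AShapiro_DDentcheva_ARuszczynski_2014a}, with the hypothesis verification (random lower semicontinuity and convexity of the integrand $(\mu,\lambda,\theta)\mapsto\mu+\lambda\varepsilon+(\lambda\phi)^*(M_N(\theta)-\mu)$, lower semicontinuity of the expectation, nonemptiness/compactness/convexity of $X^\star$, and the pointwise LLN) carried out in Appendix~\ref{ap:consistency} exactly as you propose. The only noteworthy difference is in the level-boundedness estimates: the paper controls $\lambda$ by taking $x=1$ and controls $\mu$ in both directions using two points $x_1\in(0,1)$ and $x_2>1$ with $\phi(x_i)<\infty$, which sidesteps your reliance on $\phi(0)$ being finite and supplies the $\mu\to-\infty$ bound you deferred.
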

\begin{proof}
This is a direct application of
\citep[Th.~5.4]{AShapiro_DDentcheva_ARuszczynski_2014a} in light of
Theorem~\ref{thm:asymp}.
\end{proof}
Theorems~\ref{thm:asymp}~and~\ref{cor:asymp} lead to the following
observations. As proven in Theorem~\ref{thm:asymp}, the solution sets $X^\star$
and $\widehat{X}^\star_S$ are always nonempty,
compact, and convex subsets of $\real \times [0,\infty)$, regardless of the
sample size $S$. For any $(\mu_S,\lambda_S) \in \widehat{X}^\star_{S}$, there
exists $(\mu_S',\lambda_S')\in X^\star$ satisfying
\[
 \| (\mu_S,\lambda_S)  - (\mu'_S,\lambda'_S)\|
 = \min_{(\mu',\lambda') \in X^{\star}} \| (\mu_S,\lambda_S)  - (\mu',\lambda')\|
 \le \mathbb D(\widehat{X}^\star_S,X^\star)
\]
since $X^\star$ is compact. Using again the compactness of $X^\star$, we can
deduce the existence of some $(\bar{\mu},\bar{\lambda})\in X^\star$ and a
subsequence $\{(\mu'_{S_{k}},\lambda'_{S_{k}})\}$ that converges to
$(\bar{\mu},\bar{\lambda})$ a.s. It follows that
\[
 \| (\mu_{S_k},\lambda_{S_k})  -(\bar{\mu},\bar{\lambda})\| \le 
 \mathbb D(\widehat{X}^\star_{S_k},X^\star) + \|(\mu'_{S_{k}},\lambda'_{S_{k}}) - (\bar{\mu},\bar{\lambda}) \|
\]
and, by Theorem~\ref{cor:asymp}, we have that
\[
 \| (\mu_{S_k},\lambda_{S_k})  -(\bar{\mu},\bar{\lambda})\|  \to 0 \quad\text{a.s.}
\]
Consequently, we have that
%%$
%%\widehat{f}_{S_k}(\mu_{S_k},\lambda_{S_k}) \to f(\bar{\mu},\bar{\lambda})
%%$
%%a.s.,
%This means, in particular, that 
\[
 \mu_{S_k} + \varepsilon\lambda_{S_k}+\frac{1}{S_k}\sum_{k=1}^{S_k}(\lambda_{S_k}\phi)^*(M_N^k - \mu_{S_k}) 
    \to \bar{\mu} + \varepsilon\bar{\lambda} + \bbe[(\bar{\lambda}\phi)^*(M_N - \bar{\mu})] \quad\text{a.s.}.
%    \to \mathbb E[F(\bar{\mu},\bar{\lambda})] \quad\text{a.s.}.
\]
%which is clearly stronger than Property~5 in Theorem~\ref{thm:asymp}. 

In the following corollary, we use the above observations to relate the
solutions $(\mu_{S},\lambda_{S})$ to sequences of CGVI variational densities.
\begin{theorem}
  Let the assumptions of Corollary~\ref{th:exist-phi} hold and suppose $L_\psi$
  is reflexive.  If $\{\bar{p}_k\}$ denotes a sequence of SAA estimators to the
  CGVI posterior associated with $\{S_k\}$ samples with $S_k\nearrow\infty$,
  i.e.,
  \[
    \bar{p}_k(\theta)\in\partial (\lambda_{S_k}\phi)^*(M_N(\theta)-\mu_{S_k}).
  \]
  Then any a.s.\ $\sigma(L_\psi,L_{\psi^\#})$-accumulation point
  $\bar{p}$ of $\{\bar{p}_k\}$ is a variational density for
  \eqref{eq:cgvi-phi}, i.e., $\bar{p}$ satisfies \eqref{eq:subdiff}.
\end{theorem}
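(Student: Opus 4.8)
The plan is to reduce the claim to the sample-limit consistency of Theorem~\ref{cor:asymp}, combined with the pointwise closedness of the subdifferential $\partial v$, exactly as anticipated in the discussion preceding the statement and as carried out in the proof of Theorem~\ref{th:consistency-gvi}.

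First I would fix a realization outside the null set on which both conclusions of Theorem~\ref{cor:asymp} hold and on which $\bar p$ is a $\sigma(L_\psi,L_{\psi^\#})$-accumulation point of $\{\bar p_k\}$; since $\{\bar p_k\}$ is bounded in the reflexive space $L_\psi$, such accumulation points exist, and I would work along a subsequence $\{k_j\}$ with $\bar p_{k_j}\rightharpoonup\bar p$ in $\sigma(L_\psi,L_{\psi^\#})$. Using $\mathbb D(\widehat X^\star_{S},X^\star)\to 0$ together with the compactness of $X^\star$ from Theorem~\ref{thm:asymp}, I would then pass to a further subsequence (not relabeled) along which $(\mu_{S_{k_j}},\lambda_{S_{k_j}})\to(\bar\mu,\bar\lambda)$ for some $(\bar\mu,\bar\lambda)\in X^\star$, i.e.\ a minimizer of the problem on the right-hand side of \eqref{eq:cgvi}; this is precisely the extraction argument spelled out in the paragraph before the statement. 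In the regime $\bar\lambda>0$ under which \eqref{eq:subdiff} is formulated, the identity $(\lambda\phi)^*(y)=\lambda\phi^*(y/\lambda)=\lambda v(y/\lambda)$ gives $\partial(\lambda\phi)^*(y)=\partial v(y/\lambda)$, so the defining inclusion reads $\bar p_{k_j}(\theta)\in\partial v\bigl(\lambda_{S_{k_j}}^{-1}(M_N(\theta)-\mu_{S_{k_j}})\bigr)$ for a.e.\ $\theta$ and every $j$.

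The main step, and the only genuine difficulty, is pushing this a.e.\ inclusion through the weak limit, since $\bar p_{k_j}$ converges only weakly while \eqref{eq:subdiff} is a pointwise statement. For this I would invoke Mazur's lemma to produce finite convex combinations $\widetilde p_j\eqdef\sum_{i\ge j}\alpha_i^j\,\bar p_{k_i}$ converging strongly to $\bar p$ in $L_\psi$, hence (along a further subsequence) $\widetilde p_j(\theta)\to\bar p(\theta)$ for a.e.\ $\theta$. Fixing such a $\theta$ and writing $t_i\eqdef\lambda_{S_{k_i}}^{-1}(M_N(\theta)-\mu_{S_{k_i}})\to\bar t\eqdef\bar\lambda^{-1}(M_N(\theta)-\bar\mu)$, the closed graph of $\partial v$ together with its local boundedness near $\bar t$ (the latter because $v$ is finite valued, hence locally Lipschitz, under the hypotheses of Corollary~\ref{cor:exist-phi}) gives $\operatorname{dist}\bigl(\bar p_{k_i}(\theta),\partial v(\bar t)\bigr)\to 0$; convexity of the interval $\partial v(\bar t)$ then yields $\operatorname{dist}\bigl(\widetilde p_j(\theta),\partial v(\bar t)\bigr)\le\sup_{i\ge j}\operatorname{dist}\bigl(\bar p_{k_i}(\theta),\partial v(\bar t)\bigr)\to 0$, and, $\partial v(\bar t)$ being closed, $\bar p(\theta)\in\partial v(\bar t)$ for a.e.\ $\theta$. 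Since $(\bar\mu,\bar\lambda)$ solves the minimization in \eqref{eq:cgvi} and the normalization $\bbe[\bar p]=1$ and nonnegativity $\bar p\ge 0$ pass to the weak limit, this is exactly \eqref{eq:subdiff} with $\bar p\in\fD$, so $\bar p$ is a variational density for \eqref{eq:cgvi-phi}. In the actual write-up this last paragraph is subsumed by \citep[Prop.~2.1.5(b)]{FHClarke_1983} applied after the extraction step; the one point I would take care to flag is that the integrands $(\lambda_{S_{k_j}}\phi)^*(\,\cdot\,-\mu_{S_{k_j}})$ vary with $j$, which is harmless because their evaluation points $t_i$ still converge, $\theta$-by-$\theta$, to $\bar t$.
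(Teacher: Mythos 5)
Your argument is correct, but it reaches \eqref{eq:subdiff} by a genuinely different route than the paper. The paper works at the level of the integral functionals: it shows $\Phi_k=\lambda_{S_k}\bbe[\phi(\cdot)]$ epi-converges to $\bar\lambda\bbe[\phi(\cdot)]$, upgrades this to Mosco convergence of the conjugates, invokes Attouch's theorem to get graph convergence of $\partial(\Phi_k)^*$, passes the weak accumulation point through this graph limit, and only then descends to the pointwise statement via Rockafellar's interchange of subdifferentiation and integration. You instead do the subsequence extraction of $(\mu_{S_k},\lambda_{S_k})\to(\bar\mu,\bar\lambda)$ (exactly as in the discussion preceding the statement, resting on Theorems~\ref{thm:asymp} and \ref{cor:asymp}) and then handle the weak-to-pointwise passage directly: Mazur's lemma gives strongly (hence a.e.\ along a subsequence) convergent convex combinations, while outer semicontinuity and local boundedness of the scalar map $\partial v$ --- available because $v$ is finite valued, hence locally Lipschitz, under the hypotheses of Corollary~\ref{cor:exist-phi} --- combined with convexity and closedness of the interval $\partial v(\bar t)$ force the limit into $\partial v\bigl(\bar\lambda^{-1}(M_N(\theta)-\bar\mu)\bigr)$ a.e. Your approach is more elementary and self-contained (no Mosco convergence, no graph-convergence theorem, no interchange theorem), and it makes explicit the point the paper glosses over when citing \citep{FHClarke_1983}, namely that the integrands vary with $k$ but their evaluation points converge $\theta$-by-$\theta$; the price is that it leans on the scalar, finite-valued structure of $v$, whereas the paper's functional-analytic argument is phrased so that it could in principle extend beyond expectation-type divergences. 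Two shared caveats: like the paper, you implicitly work in the regime $\bar\lambda>0$ (which \eqref{eq:subdiff} presupposes), and your closing claim that $\bbe[\bar p]=1$ ``passes to the weak limit'' assumes the SAA selections $\bar p_k$ are normalized, which is not part of their definition --- the paper's own appeal to strong duality at the end is no more detailed, so this is a gap inherited from the statement rather than introduced by you. Your side remark that accumulation points exist by boundedness in the reflexive $L_\psi$ is unproved but also unneeded, since the theorem is conditional on an accumulation point being given.
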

\begin{proof}
%This can be done using several powerful statements from
%the theory of variational convergence.
This proof uses various notions of variational convergence.  See
\citep{HAttouch_1984a} for an overview of these techniques.
One can show that the sequence of functionals 
$\Phi_k(p) \eqdef \lambda_{S_k}\bbe[\phi(p)]$ pointwise and epi-converge
to $\overline{\Phi} \eqdef \overline{\lambda}\bbe[\phi(\cdot)]$ a.s.\
\citep[Sect.~7.2.5]{AShapiro_DDentcheva_ARuszczynski_2014a}. 
%the pointwise convergence of $\Phi_k$ to
%$\overline{\Phi}$ are trivial.
Moreover, since $\langle \cdot,X \rangle : L_{\psi} \to \real$ is a
continuous linear functional for any $X \in L_{\psi^{\#}}$,
the sequence $\Phi_k(\cdot) - \langle \cdot,X\rangle$ also pointwise
and epi-converge a.s. Consequently, we have that
\[
\inf_{p \in L_{\psi}}\left\{ \Phi_k(p) - \langle p, X \rangle \right\} \to 
\inf_{p \in L_{\psi}}\left\{ \overline{\Phi}(p) - \langle p, X \rangle \right\}
\quad\text{a.s.}
\]
\citep{HAttouch_1984a}.
As a result, $(\Phi_k)^*$ converges to $(\overline{\Phi})^*$ pointwise a.s. In
fact, if $X_k \to X$ with respect to the
$\sigma(L_{\psi^{\#}},L_{\psi})$-topology, then passing to the limit inferior
(a.s.) on both sides and taking the supremum over $p$ in the inequality
\[
-\inf_{p \in L_{\psi}}\left\{ \Phi_k(p) - \langle p, X_k \rangle \right\} \ge
-( \Phi_k(p) - \langle p, X_k \rangle)\quad \forall p \in L_{\psi} \quad\text{a.s.}
\]
yields
\[
\liminf_{k\to\infty} (\Phi_k)^*(X_k) \ge (\overline{\Phi})^*(X) \quad\text{a.s.},
\]
since the pointwise and epi-limits of $\Phi_k$ coincide. These facts ensure
that $(\Phi_k)^*$ Mosco converges to $(\overline{\Phi})^*$ and therefore the
subdifferentials $\partial (\Phi_k)^*$ graph converge to
$\partial(\overline{\Phi})^*$ by \citep[Th.~3.66]{HAttouch_1984a}.
Lastly, we note that $M_N - \mu_{S_k}$ strongly converges in $L_{\psi^{\#}}$
a.s. %, i.e., 
%\[
%\mathbb P( \lim_{k \to +\infty}  \|M_N - \mu_{S_k} \|_{L_{\psi^{\#}}} = 0 ) = 1.
%\]
Therefore, if $\overline{p}$ is an a.s.
$\sigma(L_{\psi},L_{\psi^\#})$-accumulation point of $\left\{\bar{p}_k\right\}$
with $\overline{p}_k \in \partial (\Phi_k)^*(M_N - \mu_{S_k})$, then
$\overline{p} \in \partial (\overline{\Phi})^*(M_N - \bar{\mu})$ and by 
\citep[Th.~21]{RTRockafellar_1973}, we have that
\[
  \bar{p}(\theta) \in \partial (\bar{\lambda} \phi)^*(M_N(\theta) - \bar{\mu})
  \quad\text{a.s.}
\]
Due to Theorem~\ref{cor:asymp}, the discussion above surrounding the derivation
of $(\mu_{S_k},\lambda_{S_k})$, and strong duality, $\bar{p}$ is optimal for
\eqref{eq:MLE}. 
\end{proof}
%\begin{remark}[Reflexive Orlicz Spaces]\label{rem:reflexive}
%  The assumption that $L_\psi$ is reflexive is satisfied if $\psi$ and
%  $\psi^\#$ are finite and satisfy the so-called $\Delta_2$ condition.  That
%  is,
%  \begin{equation*}\label{eq:delta2}
%    \exists\,u_0,\,k\in(0,\infty)
%    \quad\text{such that}\quad \psi(2u)<k\psi(u) \quad\forall\,u\ge u_0
%    %\tag{$\Delta_2$}
%  \end{equation*}
%  and similarly for $\psi^\#$.  In this case, $L_{\psi^\#}$ is the topological
%  dual space of $L_\psi$ \citep[Cor.~2.2.12]{GAEdgar_LSucheston_1992a}.
%\end{remark}

\subsection{Performance of CGVI under Model Misspecification}\label{ssec:misspec}
As noted earlier, using other $\phi$-divergences such as the R\'{e}nyi
divergence in place of the KL divergence may produce better estimates
of the parameters $\theta$.  We demonstrate this feature with a small example
in which both the prior and model are misspecified and only a small number of
samples is available. 

Given the data $\left\{y_n=(x_n,z_n)\right\}_{n=1}^N \subset \mathbb R^2$
with $N=100$, we postulate a linear decision function $h_\theta(x) = \theta x$
and make the assumption that the observation errors
$\epsilon_n\eqdef z_n-h_{\theta^{\star}}(x_n)$ are iid standard normal
random variables. For the experiment, however, we generate the data using
the heteroscedastic model
\[
  z_n = x_n + \sigma(x_n) \eta_n
\]
(i.e., $\theta^\star=1$), where $\eta_n$ are iid standard normal random
variables and
\[
\sigma(x) = \left\{
\begin{array}{ll}
0.04   & \text{if $x <  \xi^N_{5}$} \\
 0.4   & \text{if $x \in [\xi^N_{5},\xi^N_{95}]$} \\
1.0    & \text{otherwise}
\end{array}
\right. .
\]
Here, $\xi^N_{5}$ and $\xi^N_{95}$ denote the 5\% and 95\% quantiles of the
standard normal distribution, respectively. Using the standard normal error
assumption, the log-likelihood function is
\[
  m(\theta,y_n) = -\frac{1}{2}| z_n - \theta x_n |^2.
\]
For the prior, we choose $\Pi$ to be normally distributed with mean -2 and
unit variance. We note that the true parameter value $\theta^\star=1$ is in the
support of $\Pi$, but it is more than two standard deviations from the mean.  

For the $\phi$-divergence CGVI problems, we set $\varepsilon$ to be twice the
$\phi$-divergence of the Bayesian posterior (BS) from the prior $\Pi$.  We
compute these values using $S=10,000$ samples of $\theta$ drawn from the
prior for the KL, the $\chi^2$ (CS), and the R\'{e}nyi (RY) with $\alpha=0.5$
divergences.  We list the values of $\varepsilon$ in Table~\ref{tbl:misspec}.
\begin{table}[!ht]
\centering
\begin{tabular}{l | c c c c c}
  & BS & KL & CS & RY & TV \\
  \hline
  $\varepsilon$ & ---      & 1.991814 & 5.842229 & 1.106317 & 1.376138 \\
  $\eta$        & 1.632968 & 1.092635 & 1.579035 & 0.367454 & 0.108191
\end{tabular}
\caption{The constraint tolerance $\varepsilon$ and parameter errors $\eta$ (to
  six significant digits) for the Bayesian postior (BS), the Kullback-Leibler
  divergence (KL), the $\chi^2$ divergence (CS), the R\'{e}nyi divergence (RY),
  and the total variation distance (TV).}
\label{tbl:misspec}
\end{table}
%This yields the $\varepsilon
%\[
%\varepsilon_{\rm KL} = 1.99181377700648, \quad 
%\varepsilon_{\rm CS} = 5.8422292936474935, \quad 
%\varepsilon_{\rm RY} = 1.106316524722095.
%\]
This choice of $\varepsilon$ ensures that the Bayesian posterior
density is feasible for each of the CGVI problems.
We then compute the optimal $\lambda_S$ and $\mu_S$ for each
example. For KL and CS, we compute $\lambda_S$ and $\mu_S$,
respectively, using Ridder's root finding algorithm. For RY, we compute the
optimal parameters using a projected gradient method. We plot these
densities in the left image of Figure~\ref{fig:eme_phidiv_densities} along
with their value at the true parameter $\theta^\star=1$. In each case, this
value is slightly skewed to the right of the mode. In addition, we plot the
full posterior densities in the right image of
Figure~\ref{fig:eme_phidiv_densities}.

For the total variation CGVI (TV), we proceed in a different manner. This is due
to the fact that TV is fundamentally different than the other
$\phi$-divergence examples as it includes point masses at the maximizers of
$M_N$, i.e., the maximum likelihood estimators. For this one-dimensional
example, these are easy to estimate. For larger dimensional problems,
one would require a robust optimization method to solve the associated
nonlinear program. To compute $\varepsilon$, we first compute the TV
distance of the Bayesian posterior from the prior using a grid on $[-20,20]$
of size $S = 100,000$. This value is listed in the rightmost column of
Table~\ref{tbl:misspec}.
%\[
%\varepsilon_{\rm TV} = 1.3761380360607676.
%\]
Scaling this value by two would yield a number larger than two. Consequently,
the CGVI would only be comprised of convex combinations of point masses at the
maximum likelihood estimators. Instead, we set $\varepsilon$ as in
Table~\ref{tbl:misspec} and compute the $\varepsilon/2$ quantile, which we use
to approximate the cumulative distribution function (cdf) of the full TV
posterior, see Figure~\ref{fig:TV_cdf}.

Finally, we judge the performance of the various CGVIs by estimating the mode of
the posteriors and comparing them in absolute value to the true parameter value
$\theta^\star = 1$.  We denote this error by $\eta$ and list the values in
the second row of Table~\ref{tbl:misspec}.
%Denoting these quantities by $\eta_{i}$ with $i \in \{{\rm BS, KL, CS, RY, TV}
%\}$, we have
%\[
%\begin{array}{c}
%\eta_{BS} = 1.6329681153399882\\
%\eta_{KL} = 1.0926351356729214\\
%\eta_{CS} = 1.5790350178592263\\
%\eta_{RY} = 0.36745398914732164\\
%\eta_{TV} = 0.10819108191081905
%\end{array}
%\]
As expected, the standard Bayesian posterior yields the worst estimate. This is
followed by CS, despite exhibiting a smaller variances than the other
CGVIs (see Figure~\ref{fig:eme_phidiv_densities}).  KL outperforms
both of these. However, the clear winner amongst the CGVIs, other than TV,
is RY, which corroborates the observations in
\citep{JKnoblauch_JJewson_TDamoulas_2019}. Finally, we see that TV is
the least susceptible to small data and misspecification as it provides the
best estimate. However, it is more limited in applicability than RY as one must
compute the maximum likelihood estimators and the quantiles of $M_N$.

\begin{figure}[!ht]
  \centering
   \includegraphics[width=0.43\textwidth]{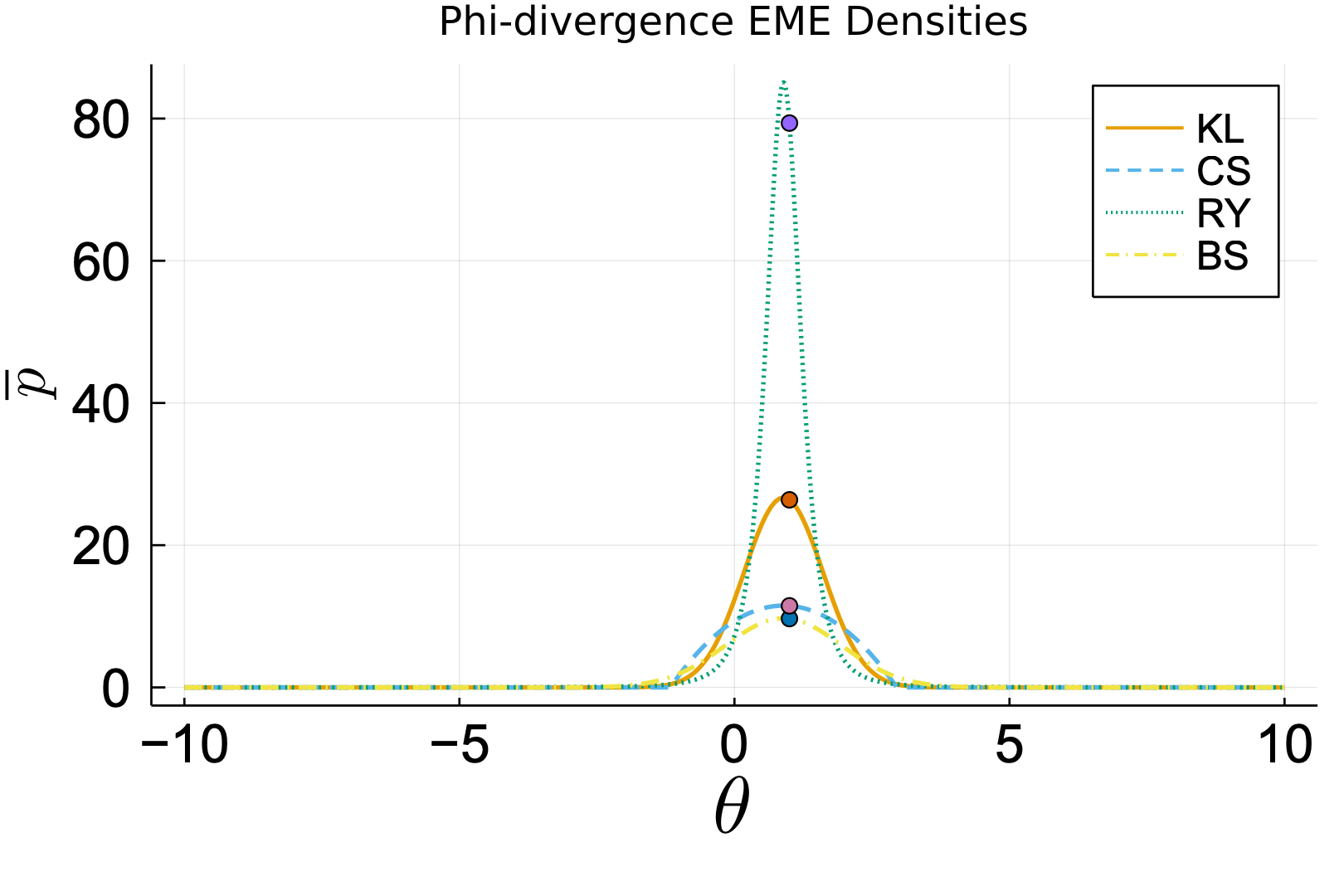}
    \includegraphics[width=0.43\textwidth]{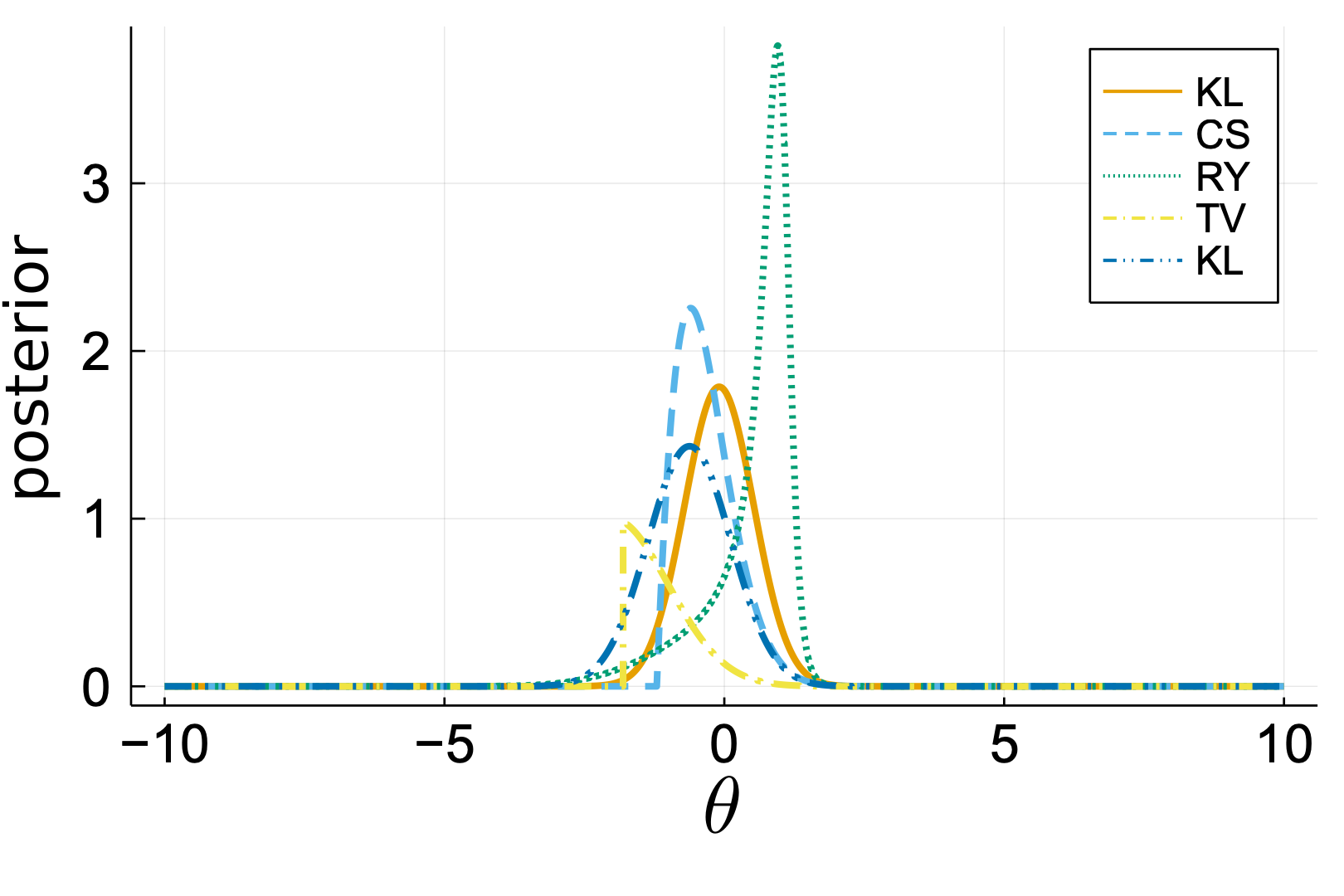}
  \caption{Left: CGVI densities for standard Bayes, KL, $\chi^2$ and Renyi ($\alpha = 0.5$) divergences. Right: full posterior densities for standard Bayes, KL, $\chi^2$ and Renyi ($\alpha = 0.5$)
  }
  \label{fig:eme_phidiv_densities}
\end{figure}

%\begin{figure}[!ht]
%  \centering
%   \includegraphics[width=0.4\textwidth]{figures/posteriors_misspec.png}
%  \caption{beep
%  }
%  \label{fig:eme_phidiv_posteriors}
%\end{figure}

\begin{figure}[!ht]
  \centering
   \includegraphics[width=0.43\textwidth]{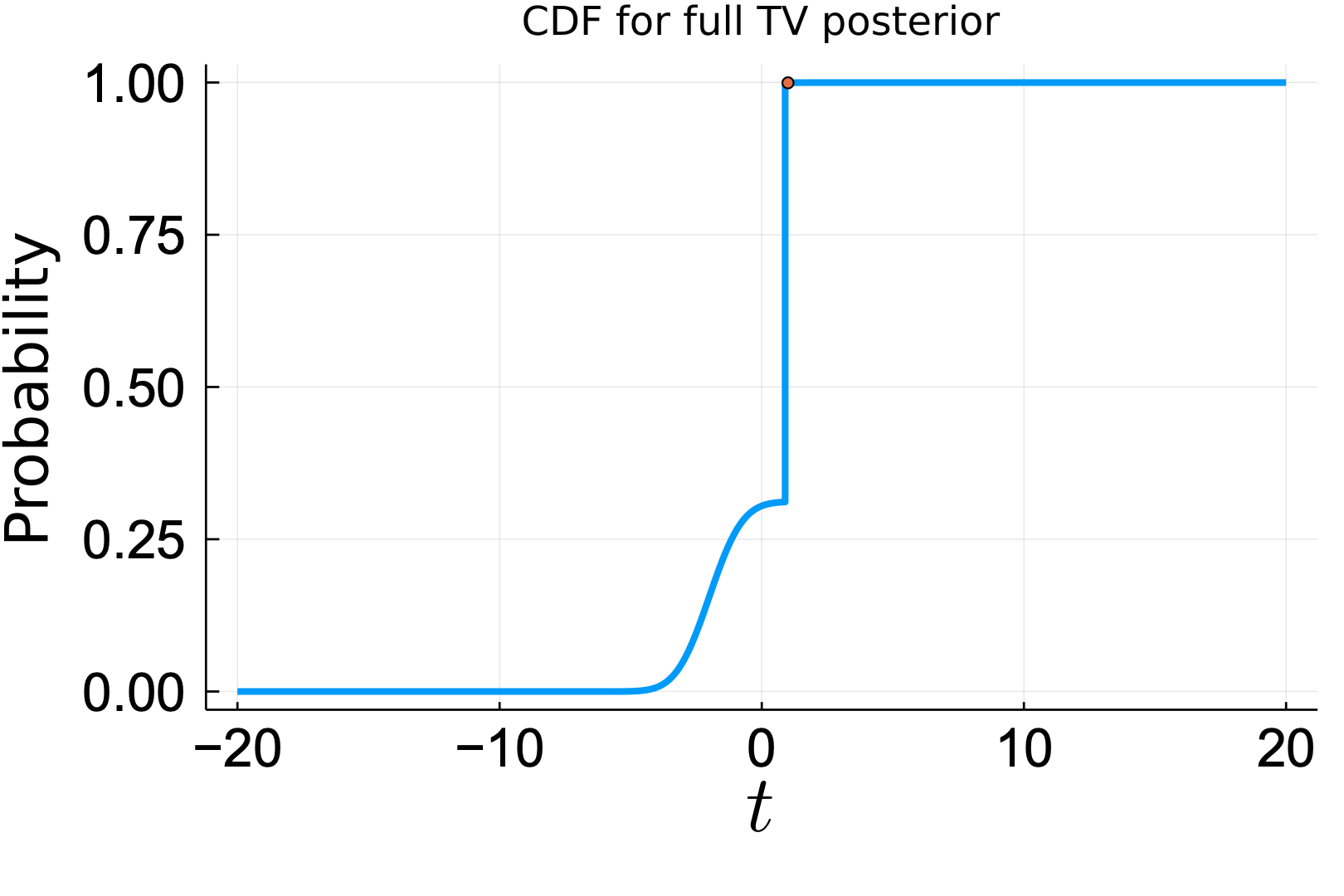}
  \caption{An empirical estimate of the cumulative distribution function for the TV posterior. The jump occurs at the estimated maximum likelihood estimator.
  }
  \label{fig:TV_cdf}
\end{figure}

\section{Empirical CGVI: An ``Objective'' Perspective}\label{sec:empir}

%In this section, we present multiple extensions to CGVI.  Our first two
%extensions define $\fA$ using {\em (i)} the Wasserstein distance and
%{\em (ii)} moment-mathcing constraints.  We conclude this section by
%highlighting the relationship between CGVI and risk measures.
%
%\subsection{Empirical CGVI Posterior Measures}
In this brief section, we present several extensions to CGVI in which we assume that no reasonable prior measure $\Pi$ exists. Instead, we wish to proceed in a more objective manner in which perhaps only samples of the prior or moments are available and we are unsure which utility function to choose. We introduce two approaches to handle this situation. The first approach falls into the class of problems described by \eqref{eq:div} whereas the second approach is based on moment matching. From a computational standpoint, more regularity of $M_N$ and the integrands $\psi$ in the case of moment-matching would be necessary to compute the global minimizers. A deeper look into the computation of empirical CGVIs will be the subject of future investigations.

\subsection{Wasserstein Distance and Empirical Priors}

In this subsection, we assume that we have many noisy observations of $Y$ and
a few noisy realizations of $\theta$.  For example, these realizations could be
the result of prior estimation attempts.  This could also model the practical
situation in which observations of $Y$ are cheap to obtain relative to
observations of $\theta$.  To formulate the estimation problem \eqref{eq:MLE},
we resort to empirical estimation to generate the prior distribution $\Pi$.

Let $\Theta$ be a locally $\sigma$-compact Polish space endowed with its
Borel $\sigma$-algebra $\cB$.
%i.e., a locally compact, $\sigma$-compact, separable metrizable space. 
According to \citep[Th.~1]{RWilliamson_JLudvik_1987}, $\Theta$ admits a metric 
$d: \Theta \times \Theta \to \mathbb R$ that has the Heine-Borel property
(i.e., all closed and bounded sets are compact). One obvious example for such
a $\Theta$ is $\real^n$ with the usual Euclidean topology.  We further require
that $M_N$ is bounded, continuous, and concave on $\Theta$.
Given $M$ observations of $\theta$, denoted by
$\{\widehat{\theta}_m\}_{m=1}^M$, we define the prior measure $\Pi$ as the
empirical measure:
\[
  \Pi = \frac{1}{M}\sum_{m=1}^M \delta_{\widehat{\theta}_m},
\]
%Since $M$ is finite, the empirical prior is uniformly
%tight: for any $\varepsilon > 0$,
%$K_{\varepsilon}=\{\widehat{\theta}_m\}_{m=1}^M$ fulfills 
%$\Pi(K_{\varepsilon}) = 1 > 1- \varepsilon$ and, as the finite union
%of singleton sets, $K_{\varepsilon}$ is compact.

One attractive approach to defining a meaningful set of measures 
$\fA$ in \eqref{eq:MLE} is to use probability metrics, cf.\
\citep{STRachev_1991}.  To this end, we employ the Wasserstein-1 distance (i.e.,
the Kantorovich-Rubinstein metric), $W_1:\fP\times\fP\to[0,\infty]$, defined by
\begin{align*}
  W_1(P,Q) \eqdef \inf_{\rho \in \Gamma(P,Q)}
  \int_{\Theta^2} d(\theta_1,\theta_2)\,
  \rho(\mathrm{d}\theta_1,\mathrm{d}\theta_2),
\end{align*}
where $\Gamma(P,Q)\subset\fP(\Theta\times\Theta)$ denotes the subset of all
Borel probability measures on $\Theta\times\Theta$ that have marginal $P$ for
their first factor and marginal $Q$ for their second.  We then define the set
$\fA$ by
\[
  \fA = \{\, P\in\fP\,\vert\, W_1(P,\Pi) \le \varepsilon\,\}.
\]
For this choice of $\fA$, we have the following existence result.
\begin{theorem}[Existence of Wasserstein CGVI]\label{prop:wasserstein}
  Under the stated assumptions, \eqref{eq:MLE} admits a solution $P^{\star}$
  for any $\varepsilon > 0$.
\end{theorem}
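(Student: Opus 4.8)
The plan is to prove existence by the direct method of the calculus of variations: I will show that the feasible set $\fA$ is weakly sequentially compact, that the objective $P\mapsto\bbe_P[M_N]$ is weakly continuous, and then conclude by the Weierstrass extreme value theorem. First I would note that $\fA$ is nonempty, since $W_1(\Pi,\Pi)=0\le\varepsilon$ gives $\Pi\in\fA$, and that the supremum in \eqref{eq:MLE} is finite because it is bounded above by $\sup_\Theta M_N<\infty$.

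The crucial step, and the one I expect to be the main obstacle, is \emph{tightness} of $\fA$. Fix the reference point $\theta_0\eqdef\widehat{\theta}_1$ and set $C\eqdef\max_{1\le m\le M} d(\theta_0,\widehat{\theta}_m)$, which is finite because $\Pi$ has finite support. For any $P\in\fA$ and any $\delta>0$ there is a coupling $\rho\in\Gamma(P,\Pi)$ with $\int_{\Theta^2} d(\theta_1,\theta_2)\,\rho(\mathrm d\theta_1,\mathrm d\theta_2)\le\varepsilon+\delta$, so by the triangle inequality
\[
  \int_\Theta d(\theta,\theta_0)\,P(\mathrm d\theta)
  \le \int_{\Theta^2}\!\big(d(\theta_1,\theta_2)+d(\theta_2,\theta_0)\big)\,\rho(\mathrm d\theta_1,\mathrm d\theta_2)
  \le \varepsilon+\delta+C .
\]
Letting $\delta\downarrow 0$, the first moments about $\theta_0$ of all measures in $\fA$ are uniformly bounded by $R_0\eqdef\varepsilon+C$. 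Because $d$ has the Heine--Borel property, each closed ball $B_R\eqdef\{\theta\,\vert\,d(\theta,\theta_0)\le R\}$ is compact, and Markov's inequality gives $P(\Theta\setminus B_R)\le R_0/R$ for every $P\in\fA$, uniformly in $P$. Hence for any $\kappa>0$ the compact set $B_{R_0/\kappa}$ carries all but $\kappa$ of the mass of every $P\in\fA$; thus $\fA$ is tight, and Prokhorov's theorem yields relative weak sequential compactness of $\fA$.

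Next I would show $\fA$ is weakly closed. The map $P\mapsto W_1(P,\Pi)$ is weakly lower semicontinuous---a standard property of Wasserstein distances; it follows, e.g., from the Kantorovich--Rubinstein representation $W_1(P,\Pi)=\sup\{\bbe_P[f]-\bbe_\Pi[f]\,\vert\,f\ \text{bounded and 1-Lipschitz}\}$, in which each functional $P\mapsto\bbe_P[f]-\bbe_\Pi[f]$ is weakly continuous, so the supremum is weakly l.s.c. Consequently the sublevel set $\fA=\{P\in\fP\,\vert\,W_1(P,\Pi)\le\varepsilon\}$ is weakly closed, and combined with the tightness above, $\fA$ is weakly sequentially compact.

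Finally, since $M_N$ is bounded and continuous on $\Theta$, the map $P\mapsto\bbe_P[M_N]=\int_\Theta M_N\,\mathrm dP$ is weakly continuous on $\fP$. Taking a maximizing sequence $\{P_k\}\subset\fA$ for $\bbe_P[M_N]$, a subsequence converges weakly to some $P^\star$, which lies in $\fA$ by weak closedness and satisfies $\bbe_{P^\star}[M_N]=\sup_{P\in\fA}\bbe_P[M_N]$ by weak continuity of the objective; this $P^\star$ is the claimed solution. I note that concavity of $M_N$ is not needed for this existence argument---it becomes relevant only for characterizing or computing solutions---while boundedness and continuity of $M_N$ and the Heine--Borel property of $d$ are essential (the latter, together with the uniform first-moment bound, is precisely what upgrades boundedness of moments to tightness on the non-compact space $\Theta$).
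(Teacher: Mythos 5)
Your proof is correct, and it follows the same overall strategy as the paper (direct method: weak compactness of $\fA$ plus weak continuity of $P\mapsto\bbe_P[M_N]$), but it establishes the key compactness step by a genuinely different, self-contained route. The paper does not prove tightness or closedness of $\fA$ itself: it invokes \citep[Prop.~3]{APichler_HXu_2019} for weak compactness of the Wasserstein ball, verifying that proposition's hypotheses via the Heine--Borel metric of \citep{RWilliamson_JLudvik_1987} (uniform tightness of $\{\Pi\}$ and compactness of $\delta$-enlargements of compact sets), and then concludes by the direct method. You instead derive a uniform first-moment bound over $\fA$ from near-optimal couplings and the finite support of the empirical prior, upgrade it to tightness via Markov's inequality and compactness of $d$-balls (this is exactly where Heine--Borel enters), apply Prokhorov, and obtain weak closedness from lower semicontinuity of $W_1(\cdot,\Pi)$ via Kantorovich--Rubinstein duality. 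What your route buys is transparency: it makes explicit where each hypothesis is used, shows that concavity of $M_N$ plays no role in existence, and in fact only needs $\Pi$ to have a finite first moment rather than finite support; what the paper's citation-based route buys is brevity and direct reuse of a compactness result stated for general uniformly tight centers. One small point to tighten in your write-up: the Kantorovich--Rubinstein supremum is usually stated over all (possibly unbounded) $1$-Lipschitz functions, so to restrict to bounded $1$-Lipschitz test functions you should note the standard truncation argument (take $f_n=\max\{\min\{f,n\},-n\}$ and use dominated/monotone convergence, which also shows that any weak limit of measures in $\fA$ has finite first moment); with that remark the closedness step is airtight.
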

\begin{proof}
Since $M_N$ is bounded and continuous, $\bbe_P\left[M_N\right]$ is
continuous in $P \in \mathfrak{P}(\Theta)$ with respect to the weak-convergence
of probability measures. It follows from \citep[Prop.~3]{APichler_HXu_2019} that
$\fA$ is weakly compact. In order to use the proof of
\citep[Prop.~3]{APichler_HXu_2019}, we first note that
$\mathcal{P}\eqdef\{\Pi\}$ is a uniformly tight set of probability measures.
Moreover, according to \citep[Th.~1]{RWilliamson_JLudvik_1987}, given a compact
set $C \subset \Theta$ and some constant $\delta > 0$, the set
$
  C_{\delta}\eqdef\left\{\theta\in\Theta\,\vert\, d(\theta,c)\le\delta,
    c\in C \right\}
$
is a closed and bounded set and therefore, compact. The rest of the proof in
\citep[Prop.~3]{APichler_HXu_2019} can be used without change. Finally, the
existence of $P^{\star}$ for \eqref{eq:MLE} now follows by the usual arguments
from the direct method of the calculus of variations, cf.,
\citep[Th.~3.2.6]{HAttouch_GButtazzo_GMichaille_2006a}.
\end{proof}

\begin{remark}[Assumptions in Theorem~\ref{prop:wasserstein}]
The assumptions of Theorem~\ref{prop:wasserstein} are restrictive in the sense
that it does not appear possible to extend the current proof to a non-trivial
infinite-dimensional setting. On the other hand, $\Theta$ is still allowed to
be of arbitrarily high dimension, which is clearly of interest to modern
applications in machine learning and data science. 
%  restrictive. However, in order to weaken the conditions on $\Theta$ or
%  $M_N$, for example if we wish $\Theta$ to be infinite-dimensional, then we
%  cannot apply the arguments in the proof of \citep[Prop.~3]{APichler_HXu_2019}.
\end{remark}

In light of Theorem~\ref{prop:wasserstein}, we know that there exists a
$P^{\star} \in \fA$ such that
\[
\bbe_{P^{\star}}[M_N] = \max_{P \in \fA} \bbe_{P}[M_N].
\]
%Nevertheless, our arguments are nonconstructive. However, for the
%choice of the
%In this subsection, we assume our pay-off function $M_N$ is proper,
%upper semicontinuous and concave, and that $\Theta$ is a Banach space.
%Suppose we have $M$ observations of $\theta^\star$,
%$\{\widehat{\theta}_m\}_{m=1}^M$.  In this setting, we define our prior measure
%to be the empirical measure
%\[
%  \Pi = \frac{1}{M}\sum_{m=1}^M \delta_{\widehat{\theta}_m}
%\]
%where $\delta_\theta$ denotes the point mass centered at $\theta$.  
%We employ
%the type-1 Wasserstein distance to handle such priors within the optimization
%problem \eqref{eq:MLE}.  In particular, we define the set $\fA$
%by
%\[
%  \fA = \{\, P\in\fP\,\vert\,
%     W(P,\Pi) \le \varepsilon\,\}.
%\]
%Here, $W:\fP\times\fP\to[0,\infty]$ is the Wasserstein distance, e.g.,
%the Kantorovich-Rubenstein metric,
%\[
%  W(P,Q) \eqdef \inf\left\{\int_{\Theta^2}\|\theta_1-\theta_2\|\,\Pi
%  (\mathrm{d}\theta_1,\mathrm{d}\theta_2)\,\bigg\vert\,
%  \text{$\Pi$ has marginals $P$ and $Q$, respectively}\,\right\}.
%\]
%Wasserstein distance,
To compute the CGVI measure $P^\star$, we can reformulate the estimation
problem \eqref{eq:MLE} as a finite-dimensional optimization problem
(cf.\ \citep{PMEsfahani_DKuhn_2018a}).  In
particular, the optimal value in \eqref{eq:MLE} is equal to the optimal value
of the problem
%\begin{subequations}
%\begin{align}
%  &\inf_{\lambda\ge 0,\,z_m,\,s_m}\,\left\{\lambda\varepsilon + 
%    \frac{1}{M}\sum_{m=1}^M s_m\right\} \\
%  &\operatorname*{subject\;to}\quad
%    \|z_m\|_*\le\lambda,\quad m=1,\ldots,M \\
%  &\qquad\qquad\quad
%    (-M_N)^*(z_m) - \langle z_m, \theta_m\rangle\le s_m,
%      \quad m = 1,\ldots,M.
%\end{align}
%\end{subequations}
%
%\begin{subequations}
%\begin{align}
%  &\sup_{\alpha_m,\,q_m} \,\frac{1}{N}\sum_{m=1}^M \alpha_m
%    M_N(\theta_m - \tfrac{q_m}{\alpha_m}) \\
%  &\operatorname*{subject\;to}\quad
%    \sum_{m=1}^M\alpha_m = 1,\quad\alpha_m\ge 0,\quad m=1,\ldots,M\\
%  &\qquad\qquad\quad
%    \frac{1}{M}\sum_{m=1}^M \|q_m\|\le\varepsilon.
%\end{align}
%\end{subequations}
%
\begin{subequations}\label{eq:wass}
\begin{align}
  &\sup_{\theta_1,\ldots,\theta_M\in\Theta}
    \,\frac{1}{M}\sum_{m=1}^M M_N(\theta_m) \\
  &\operatorname*{subject\;to}\quad
    \frac{1}{M}\sum_{m=1}^M d(\theta_m,\widehat{\theta}_m)\le\varepsilon. 
\end{align}
\end{subequations}
The arguments in the proofs of Theorems~4.2 and 4.4 of
\citep{PMEsfahani_DKuhn_2018a} extend directly to our more general setting.
Therefore, if $\{\theta_1^\star,\ldots,\theta_M^\star\}$ is a solution to
\eqref{eq:wass}, then the optimal measure is given by
\[
  P^\star = \frac{1}{M}\sum_{m=1}^M \delta_{\theta^\star_m}.
\]

\subsection{Moment Matching}

For this approach, we assume that we are given noisy observations,
$\mathfrak{m}_m$, of auxiliary quantities with the form $\bbe[\psi_m]$ where
$\psi_m:\Theta\to\real$ for $m=1,\ldots,M$ are $\cB$-measurable functions that
are $\Pi$-integrable, where $\Pi\in\fP$ again is a predetermined prior measure.
In this case, we choose $\fA$ to be the subset of probability measures that
match the computed generalized moments $\mathfrak{m}_m$ up to the fixed
tolerances $\varepsilon_m\ge 0$, i.e.,
\[
  \fA = \{\,P\in\fP\,\vert\,
    |\bbe_P[\psi_m]-\mathfrak{m}_m|
    \le\varepsilon_m,\;\;m=1,\ldots,M\,\}.
\] 
Owing to a result from Rogosinski \citep{WWRogosinski_1958a} (see also
\citep[Th.\ 7.37]{AShapiro_DDentcheva_ARuszczynski_2014a}),
the maximizing measure for \eqref{eq:MLE} is a convex
combination of at most $2M+1$ Dirac measures.  Consequently,
%if, e.g., $\Theta$ is compact and $M_N$ and $\psi_m$, $m=1,...,M$ are
%continuous, then
we can reformulate \eqref{eq:MLE} as the optimization problem
\begin{align*}
  &\max_{\substack{\theta_1,\ldots,\theta_{2M+1}\in\Theta \\ \alpha_1,\ldots,\alpha_{2M+1}\in\real}}
     \quad\sum_{m=1}^{2M+1} \alpha_m M_N(\theta_m) \\
  &\quad\operatorname{subject\;to}\quad
     \mathfrak{m}_k-\varepsilon_k\le\sum_{m=1}^{2M+1} \alpha_m\psi_k(\theta_m)
      \le\mathfrak{m}_k+\varepsilon_k, \;\;k=1,\ldots,2M+1 \\
  &  \;\qquad\qquad\qquad\sum_{m=1}^{2M+1}\alpha_m = 1,
       \;\;\alpha_k \ge 0, \;\;k=1,\ldots,2M+1.
\end{align*}
See \citep[Th.~6.66]{AShapiro_DDentcheva_ARuszczynski_2014a} for details
in the context of distributionally robust stochastic optimization.
%The existence of solutions to the above problem follows from the
%Weierstrass Theorem. 
Given optimal $\alpha_m^\star$ and $\theta_m^\star$ (if they exist),
the CGVI measure then
\[
  P^\star = \sum_{i=1}^{2M+1} \alpha_m^\star \delta_{\theta_m^\star}.
\]

\section{Conclusions}
Extended M-estimation provides a generalization of traditional statistical
estimation procedures including maximum likelihood, regression, Bayesian
inference, and the Gibbs posterior.
Much like Bayesian inference and the Gibbs posterior, CGVI
permits the use of subjective and data-driven information to produce a
distribution of likely values for the unknown parameters, which can then be
used to perform further analyses.  Additionally, these distributions often have
semi-analytical representations.  In particular, they require the solution of a
small convex optimization problem in the case of the $\phi$-divergence. Many natural
extensions of this work exist, including its application to estimation using
functional data (e.g., time-dependent signals), its application to training
machine learning models such as GANs, the development of efficient sampling
methods and the development of rigorous experimental design techniques that use
CGVI.

\acks{
  Sandia National Laboratories is a multimission laboratory
  managed and operated by National Technology and Engineering
  Solutions of Sandia, LLC., a wholly owned subsidiary of
  Honeywell International, Inc., for the U.S.\ Department of
  Energy's National Nuclear Security Administration under
  contract DE-NA0003525.
  This paper describes objective technical results and analysis. Any
  subjective views or opinions that might be expressed in the paper
  do not necessarily represent the views of the U.S.\ Department of
  Energy or the United States Government.
}

\appendix

\section{Auxiliary Results for $\phi$-Divergence CGVI}\label{s:ap_Orlicz}
%We are now in position to analyze the $\phi$-divergence CGVI problem
%\eqref{eq:MLE-phi}.  In the subsequent subsections,
In this appendix, we study the topological properties of $\fD_\varepsilon$ and
$v_{\fD_\varepsilon}$ defined using a $\phi$-divergence.

\subsection{Topological Properties of $\fD_\varepsilon$}\label{ss:ap_D}

First, we show that $\fD_\varepsilon\subset L_\psi$. To see this, we have the
inequality
\begin{equation}\label{eq:ap-order}
  \psi(x) = \sup_{t\ge 0}\{tx - \phi^*(t)\}
          \le \sup_{t\in\real}\{tx - \phi^*(t)\}
            = \phi(x),
\end{equation}
which implies $\bbe[\psi(p)] \le \bbe[\phi(p)] \le \varepsilon$ and
$\fD_\varepsilon\subset L_\psi$.
As a result, we choose $\dset=L_\psi$ and $\mset=L_{\psi^\#}$.
We note that both $\dset$ and $\mset$ are decomposable.  In particular, 
let $X\in L_{\psi}$, $A\in\cB$ and $Y$ a $\cB$-measurable and bounded function,
then there exists $a>0$ and $b>0$ such that $\bbe[\psi(|X/a|)] < \infty$ and
$\bbe[\psi(|Y/b|)] < \infty$. Now, let $c=\max\{a,b\}$ and define
\[
  Z \eqdef \left\{\begin{array}{ll}
    X & \text{in $\Theta\setminus A$} \\
    Y & \text{in $A$}
  \end{array}\right. .
\]
Then, $\bbe[\psi(|Z/c|)] < \infty$ and $Z\in L_{\psi}$ (cf.\
\citep[pg.~184]{RTRockafellar_1971a} for additional discussion).
%In general, Orlicz spaces are decomposable, however Orlicz hearts are not
%necessarily decomposable unless the associated Orlicz function is real valued.
The decomposability of $\dset$, combined with
\citep[Cor.~3D]{RTRockafellar_1976a}, ensures that
$\Phi(\cdot)=\bbe[\phi(\cdot)]$ is $\sigma(\dset,\mset)$-lower semicontinuous
and hence $\fD_\varepsilon$ is $\sigma(\dset,\mset)$-closed.
As a final result, we prove that $\fD_\varepsilon$ is bounded in $\dset$.
To this end, \citep[Th.~2.2.9]{GAEdgar_LSucheston_1992a} provides
\[
  \|p\|_{L_\psi} \le \|p\|_{L_{\psi^\#}}^\#
    = \sup\left\{|\bbe[p X]|\;\vert\;\bbe[\psi^\#(|X|)]\le 1\right\}
  \quad\forall\,p\in L_\psi,
\]
and applying Young's inequality to the objective function in the definition
of the norm $\|\cdot\|_{L^{\psi^\#}}^{\#}$ yields
\[
  |\bbe[p X]|\le\bbe[|p||X|]\le\bbe[\psi(|p|)]+\bbe[\psi^\#(|X|)]
  \le \bbe[|\psi(|p|)] + 1
  \quad\forall\,p\in L_\psi.
\]
Therefore, $\|p\|_{L_\psi}\le \varepsilon+1$ for all $p\in\fD_\varepsilon$
and $\fD_\varepsilon$ is bounded.

\subsection{Properties of $\cR^{\varepsilon}$}\label{ss:ap_sig}

To prove that $\cR^\varepsilon$ is finite valued on $\mset$, we first note
that $\cV(X) = \bbe[v(X)]$ and since $v$ is increasing we have that
$\cV(X) \le \bbe[v(|X|)]$. Now, if $\|X\|_{\mset}\le 1$, then $\cV(X)\le 1$.
On the other hand, if $\|X\|_{\mset}>1$, then $|X|/\|X\|_{\mset} < |X|$ a.s.\
and 

The goal of this section is to prove that $\cR^{\varepsilon}$ is
finite valued.  To do this, we first show that $\fD_\varepsilon$ is bounded in
$L_\psi$.
To this end, \citep[Th.~2.2.9]{GAEdgar_LSucheston_1992a} provides
\[
  \|p\|_{L_\psi} \le \|p\|_{L_{\psi^\#}}^\#
    = \sup\left\{|\bbe[p X]|\;\vert\;\bbe[\psi^\#(|X|)]\le 1\right\}
  \quad\forall\,p\in L_\psi,
\]
and applying Young's inequality to the objective function in the definition
of the norm $\|\cdot\|_{L^{\psi^\#}}^{\#}$ yields
\[
  |\bbe[p X]|\le\bbe[|p||X|]\le\bbe[\psi(|p|)]+\bbe[\psi^\#(|X|)]
  \le \bbe[|\psi(|p|)] + 1
  \quad\forall\,p\in L_\psi.
\]
Therefore, $\|p\|_{L_\psi}\le \varepsilon+1$ for all $p\in\fD_\varepsilon$
and $\fD_\varepsilon$ is bounded.  Using this, we can prove that
$\cR^{\varepsilon}$ is finite valued.  Let $X\in L_{\psi^\#}$, then by
\citep[Prop.~2.2.7]{GAEdgar_LSucheston_1992a}, we can bound the objective
function in \eqref{eq:cgvi-phi} by
\[
  \bbe[p X] \le \bbe[p|X|] \le 2\|p\|_{L_\psi}\|X\|_{L_{\psi^\#}}
                           \le 2(\varepsilon+1)\|X\|_{L_{\psi^\#}}
\]
for all $p\in\fD_\varepsilon$.
Therefore, $\cR^{\varepsilon}$ is finite valued on $L_{\psi^\#}$ and
%In addition, Proposition~\ref{prop:Phi-conj} holds with $\dset=L_\psi$,
%$\mset=L_{\psi^\#}$, and $\Phi(\cdot)=\bbe[\phi(\cdot)]$, ensuring that
%$\Phi^*$ is proper, closed, convex and monotonic.  As a consequence of
Young's inequality ensures that
\[
  \bbe[p X] - \Phi(p)
  \le \bbe[\psi(p)] + \bbe[\psi^\#(|X|)] - \bbe[\phi(p)]
  \le \bbe[\psi^\#(|X|)]
\]
for all $p\in\dset$ with $p\ge 0$ a.s.  In particular, $\Phi^*$ is finite
and hence continuous and subdifferentiable.  Theorem~\ref{th:gen-exist} then
provides conditions for existence of solutions.  In particular, a CGVI density
exists in $\fD_\varepsilon$ if
\[
  \emptyset\neq \partial v_\varepsilon(M_N)\cap\fD_\varepsilon
  \subseteq L_{\psi}.
\]

\section{Asymptotic Consistency of SAA Estimators}\label{ap:consistency}
In this appendix, we prove various properties of the integrand in
\eqref{eq:cgvi} when $\Phi^*$ is an expectation.  These properties ensure the asymptotic consistency
of the estimators for $\bar{\lambda}$ and $\bar{\mu}$ in the large sample
limit (i.e., $S\to\infty$).
\begin{theorem}\label{thm:asymp}
Let the assumptions of Corollary~\ref{th:exist-phi} holds.  Then, we have the
following properties:
\begin{enumerate}
\item The integrand $F : \real \times[0,\infty)\times\Theta\to[-\infty,\infty]$
      given by
      \[
        F(\mu,\lambda,\theta) \eqdef  \mu + \lambda\varepsilon + (\lambda\phi)^*(M_N(\theta) - \mu)
      \]
      is random lower semicontinuous (also called a normal integrand
      \citep{RTRockafellar_RJBWets_1998a});
\item There exists $\Theta_0\in\mathfrak{B}$ with $\Pi(\Theta_0)=1$ such that
      $F(\cdot,\cdot,\theta):\real \times [0,\infty) \to [-\infty,\infty]$ is
      convex for all $\theta \in \Theta_0$;
\item The integral function $f:\real\times[0,\infty)\to[-\infty,\infty]$ given
      by
      \[
        f(\mu,\lambda) \eqdef \mathbb E[F(\mu,\lambda)]
      \]
      is lower semicontinuous and there exists
      $(\overline{\mu},\overline{\lambda})\in\real\times[0,\infty)$ and a
      neighborhood $U \subset \real\times[0,\infty)$ containing
      $(\overline{\mu},\overline{\lambda})$ on which $f$ is bounded from above;
\item The set of minimizers $X^{\star} \subset \real \times [0,\infty)$ of $f$
      is nonempty, closed, convex, and bounded;
\item The pointwise LLN holds for every
      $(\mu,\lambda) \in \real \times [0,\infty)$, i.e.
      \[
        \widehat{f}_{S}(\mu,\lambda)  \eqdef \mu + \lambda\varepsilon
         + \frac{1}{S}\sum_{s=1}^S(\lambda\phi)^*(M_N^s - \mu) 
         \to f(\mu,\lambda)
      \] 
      a.s.\ for all $(\mu,\lambda) \in \real \times [0,\infty)$.
\end{enumerate}
\end{theorem}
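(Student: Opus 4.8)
The plan is to verify the five items in the stated order, deducing the later ones from the earlier. Write $\Psi(\mu,\lambda,y)\eqdef\mu+\lambda\varepsilon+(\lambda\phi)^*(y-\mu)$, so that $F(\mu,\lambda,\theta)=\Psi(\mu,\lambda,M_N(\theta))$. Since $(\lambda\phi)^*(z)=\sup_t\{tz-\lambda\phi(t)\}$ is a supremum of functions that are \emph{affine} in $(\lambda,z)$, the map $(\lambda,z)\mapsto(\lambda\phi)^*(z)$ is convex and lower semicontinuous on $\real^2$ (it is in fact the positively homogeneous perspective of $\phi^*=v$, hence sublinear). Precomposing with the affine map $(\mu,\lambda)\mapsto(\lambda,y-\mu)$ and adding the linear term $\mu+\lambda\varepsilon$ shows $\Psi(\cdot,\cdot,y)$ is convex and lsc for every $y\in\real$, which gives item~2 with $\Theta_0\eqdef\{\theta:M_N(\theta)\in\real\}$ (of full measure since $M_N\in\mset$). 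For item~1, $\Psi$ is Borel (being lsc), so $F=\Psi(\cdot,\cdot,M_N(\cdot))$ is $\mathcal B(\real\times[0,\infty))\otimes\cB$-measurable; combined with lower semicontinuity of $F(\cdot,\cdot,\theta)$ and completeness of $(\Theta,\cB,\Pi)$, this is exactly what is needed to conclude that $F$ is a normal integrand \citep[Ch.~14]{RTRockafellar_RJBWets_1998a}.

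For item~3 I would first record two elementary lower bounds, both using $\phi(1)=0$: taking $t=1$ gives $(\lambda\phi)^*(z)\ge z$, and taking a minimizer $t^\star$ of $\phi$ gives $(\lambda\phi)^*(z)\ge0$. The first yields $F(\mu,\lambda,\theta)\ge M_N(\theta)+\lambda\varepsilon\ge M_N(\theta)$, and $M_N\in\mset$ with $\bbe[M_N]$ finite, so lower semicontinuity of $f=\bbe[F]$ follows from lower semicontinuity of $F(\cdot,\cdot,\theta)$ and Fatou's lemma applied to $F-M_N\ge0$. For the local upper bound, note that for $\lambda>0$ one has $(\lambda\phi)^*(M_N-\mu)=\lambda v((M_N-\mu)/\lambda)$; monotonicity and convexity of $v$, together with the assumptions that $v,\psi$ are finite valued and satisfy $(\Delta_2)$ and that $M_N\in L_{\psi^\#}$ (so $\bbe[v(c|M_N|)]<\infty$ for every $c>0$), show that $f$ is finite on the open half-plane $\{\lambda>0\}$. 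Being convex (item~2) and finite on an open set, $f$ is locally bounded there, so item~3 holds at any $(\bar\mu,\bar\lambda)$ with $\bar\lambda>0$; and $f>-\infty$ everywhere by the first bound, so $f$ is proper.

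Item~4 then reduces to convex analysis. Convexity of $X^\star$ is immediate from convexity of $f$, and closedness from lower semicontinuity of $f$; the substance is that $g\eqdef f+\delta_{\real\times[0,\infty)}$ is level-coercive, which forces $X^\star$ to be nonempty and bounded. I would compute the recession function of $g$ along a feasible direction $(d_\mu,d_\lambda)\neq0$, $d_\lambda\ge0$, using that the perspective (being positively homogeneous) washes out the $M_N$-dependence in the limit, the interchange of $\bbe$ with the limit being justified by monotone convergence along the recession direction. For $d_\lambda>0$, writing $s\eqdef-d_\mu/d_\lambda$, one obtains $g^{0+}(d_\mu,d_\lambda)=d_\mu+\varepsilon d_\lambda+d_\lambda\phi^*(s)=d_\lambda\big(\phi^*(s)-s+\varepsilon\big)\ge\varepsilon d_\lambda>0$, the last step using $\phi^*(s)\ge s$ and $\varepsilon>0$. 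For $d_\lambda=0$ one gets $g^{0+}(\pm1,0)=h^{0+}(\mp1)$ with $h\eqdef v-\mathrm{id}=\phi^*-\mathrm{id}$, and since $h$ is convex, nonnegative, vanishes only at $0$, and has one-sided derivatives bounded below by $-1$ (because $v$ is monotone), strict risk aversion rules out $h$ being identically zero on either half-line, so both recession slopes $h^{0+}(\pm1)$ are strictly positive. Hence $g^{0+}(d)>0$ for every feasible $d\neq0$, so $g$ attains its infimum on a nonempty, compact, convex set \citep[Ch.~3]{RTRockafellar_RJBWets_1998a}; this set is $X^\star$.

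For item~5, fix $(\mu,\lambda)$: the summands $(\lambda\phi)^*(M_N(\theta_s)-\mu)$ are i.i.d.\ (measurable functions of the i.i.d.\ $\theta_s\sim\Pi$) and bounded below by $M_N(\theta_s)-\mu$, whose negative part is $\Pi$-integrable, so Kolmogorov's strong law --- in the form that permits the mean to be $+\infty$, in which case the averages diverge to $+\infty$ a.s.\ --- gives $\widehat f_S(\mu,\lambda)\to\mu+\lambda\varepsilon+\bbe[(\lambda\phi)^*(M_N-\mu)]=f(\mu,\lambda)$ a.s.\ in $[-\infty,+\infty]$, with the mean in fact finite for $\lambda>0$ by the $(\Delta_2)$ estimate above. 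The main obstacle is item~4, specifically nonemptiness and boundedness of $X^\star$: the recession computation is where $\varepsilon>0$ is genuinely used (for $\varepsilon=0$ one has $\fD_0=\{\pi\}$, $\cR^0(M_N)=\bbe[M_N]$, and the infimum over $\lambda$ is not attained), and one must handle with care the boundary directions $(\pm1,0)$ of the feasible half-plane, where positivity of the recession slopes rests on strict risk aversion and monotonicity of $v$ rather than on $\varepsilon$.
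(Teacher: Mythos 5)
Your proposal is correct, and for items 1--3 and 5 it follows essentially the paper's own route: lower semicontinuity of the integrand via its conjugate (supremum-of-affine-minorants) representation plus the measurability composition with $M_N$, the Fatou argument with the integrable minorant coming from $\phi(1)=0$, the Orlicz/$(\Delta_2)$ bound to get finiteness (hence local boundedness above) at points with $\lambda>0$, and the strong LLN for the fixed-$(\mu,\lambda)$ convergence, where your appeal to the extended Kolmogorov law with an integrable lower bound is somewhat more careful than the paper's one-line citation. The genuine difference is item 4. The paper obtains existence of a minimizer from strong duality, i.e.\ attainment of the minimum in \eqref{eq:cgvi} via Theorem~\ref{th:exist-phi}, and then bounds minimizing sequences by explicit conjugate inequalities: taking $x=1$ in the Fenchel minorant bounds $\lambda_k$ (note the displayed bound $0\le\lambda_k\le\overline{f}-\bbe[M_N]$ silently divides by $\varepsilon$, so $\varepsilon>0$ is needed exactly as you flag), and choosing $x_1<1<x_2$ in $\mathrm{dom}\,\phi$ bounds $\mu_k$ from both sides. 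You instead prove nonemptiness and boundedness of $X^\star$ in one stroke through a recession-function/level-coercivity computation for $f+\delta_{\real\times[0,\infty)}$, with the expectation--recession interchange justified by monotone convergence of the difference quotients. This buys a self-contained argument that does not lean on the duality attainment result, and it isolates precisely where the hypotheses act: $\varepsilon>0$ gives positivity of the recession slope in directions with $d_\lambda>0$, while positivity along $(\pm1,0)$ rests on superlinearity of $v$ (equivalently, $\psi$ finite valued) and on strict risk aversion of $v$, i.e.\ $v(x)=x$ only at $x=0$; the latter is the same implicit nondegeneracy the paper uses when it asserts the existence of $x_1<1$ in $\mathrm{dom}\,\phi$ (if $\mathrm{dom}\,\phi\subset[1,\infty)$ then $v(x)=x$ for all $x\le 0$, both arguments break down, and $X^\star$ can indeed be unbounded), so you are relying on the paper's standing risk-aversion assumption rather than introducing a gap. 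The paper's route is shorter given the machinery it has already built; yours is more transparent about coercivity and would generalize to settings where duality attainment has not been established separately.
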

\begin{proof}
To prove Property 1, we first consider
$\widehat{F}:\real\times[0,\infty)\times\real\to[-\infty,\infty]$ where
\[
  \widehat{F}(\mu,\lambda,t) \eqdef  \mu + \lambda\varepsilon + (\lambda\phi)^*(t - \mu).
\]
Note that $F(\mu,\lambda,\theta) = \widehat{F}(\mu,\lambda,M_N(\theta))$ and
$\widehat{F}(\mu,\lambda,t)$ is otherwise independent of $\theta \in \Theta$.
According to \citep[Ex.~14.30~\&~14.31]{RTRockafellar_RJBWets_1998a}, we only
need to prove that $\widehat{F}$ is lower-semicontinuous in order for it to
qualify as a random lower-semicontinuous function. To this end, let
$(\mu_k,\lambda_k,t_k) \in \real\times[0,\infty)\times\real$ such
that $(\mu_k,\lambda_k,t_k) \to
(\overline{\mu},\overline{\lambda},\overline{t})$ as $k \to \infty$.
By definition of the Fenchel conjugate, we have
\[
\widehat{F}(\mu_k,\lambda_k,t_k) 
\ge
\mu_k + \varepsilon \lambda_k + (t_k - \mu_k) x - \lambda_k \phi(x),\quad \forall x \in \mathbb R.
\]
It follows that for any fixed $x \in \real$ the lower bound is either
finite or $-\infty$.  Since $\phi$ is proper, there is at least one point
$x\in\real$ for which $\phi(x) < \infty$. Taking the limit inferior of both
sides yields
\[
\liminf_{k \to +\infty} \widehat{F}(\mu_k,\lambda_k,t_k)  \ge 
\overline{\mu} + \varepsilon \overline{\lambda} + (\overline{t} - \overline{\mu}) x - \overline{\lambda} \phi(x).
\]
Taking the supremum over $x\in\real$ on the right hand side above yields 
\[
\liminf_{k \to +\infty} \widehat{F}(\mu_k,\lambda_k,t_k) \ge 
\overline{\mu} + \varepsilon \overline{\lambda}  + ( \overline{\lambda} \phi)^*(\overline{t} - \overline{\mu}) = \widehat{F}(\overline{\mu},\overline{\lambda},\overline{t}),
\]
i.e., $\widehat{F}$ is lower semicontinuous. Now, since $M_N$ is
$\mathfrak{B}$-measurable it follows from
\citep[Prop.~14.45c]{RTRockafellar_RJBWets_1998a}
that $F$ is random lower semicontinuous.

In Property 2, the convexity of $F(\cdot,\cdot,\theta)$ follows from
\citep[Ex.~3.49]{RTRockafellar_RJBWets_1998a}.
%
%. We provide it here for ease of reference. Let $\alpha \in (0,1)$ and fix $(\mu_i,\lambda_i) \in \mathbb R \times \mathbb R_+$ for $i =1,2$.  Clearly, the linear part of $F(\cdot,\cdot,\theta)$ is convex, so we will focus on the nonlinear term. Note that for  any $c > 0$, there exists some $x_\alpha \in \mathbb R$ such that 
%\begin{multline*}
%((\alpha\lambda_1 + (1-\alpha)\lambda_2)\phi)^*(M_N(\theta) - 
%(\alpha \mu_1 + (1-\alpha)\mu_2) - c
%%= 
%%-\sup_{x \in \mathbb R} \{ (M_N(\theta) - \mu_i) x - \lambda_i \varphi(x) \}
%%=
%% \inf_{x \in \mathbb R} \{ \lambda_i \varphi(x)  -  (M_N(\theta) - \mu_i) x\} + \varepsilon 
% \le \\
% (M_N(\theta) - (\alpha \mu_1 + (1-\alpha)\mu_2)) x_\alpha -   (\alpha\lambda_1 + (1-\alpha)\lambda_2)\phi(x_\alpha)
% = \\
%\alpha  [(M_N(\theta) - \mu_1) x_\alpha -   \lambda_1\phi(x_\alpha)] +
%(1-\alpha) [(M_N(\theta) - \mu_2) x_\alpha -   \lambda_2\phi(x_\alpha)]
%\le \\
%\alpha 
%(\lambda_1\phi)^*(M_N(\theta) - 
%\mu_1) + 
%(1-\alpha)
%(\lambda_2\phi)^*(M_N(\theta) - 
%\mu_2)
%\end{multline*}
%Since this inequality holds for all $c > 0$, we can deduce the convexity of the nonlinear term.
%\begin{multline*}
%((\alpha\lambda_1 + (1-\alpha)\lambda_2)\phi)^*(M_N(\theta) - 
%(\alpha \mu_1 + (1-\alpha)\mu_2)
%\le\\
%\alpha 
%(\lambda_1\phi)^*(M_N(\theta) - 
%\mu_1) + 
%(1-\alpha)
%(\lambda_2\phi)^*(M_N(\theta) - 
%\mu_2)
%\end{multline*}
%as desired. 
%Convexity of $F(\cdot,\cdot,\theta)$ follow.
%s, i.e., $F$ is a convex random lower-semicontinuous function.

To prove Property~3, the feasible set in \eqref{eq:cgvi} is nonempty,
closed, and convex with a nonempty interior. In addition, for any
$(\mu,\lambda) \in \real\times[0,\infty)$ there exists $x\in\real$ satisfying 
\[
F(\mu,\lambda,\theta) \ge \mu + \varepsilon \lambda + (M_N(\theta) - \mu) x - \lambda \phi(x).
\]
In fact, $x$ can be chosen independently of $(\mu,\lambda)$ in the effective
domain of $\phi$. Therefore,
$F(\mu,\lambda,\cdot) : \Theta \to [-\infty,\infty]$ is bounded from below by
an integrable function. Combining this with the lower-semicontinuity of
$F(\cdot,\cdot,\theta) : \real \times [0,\infty) \to [-\infty,\infty]$ and
using Fatou's lemma, we deduce the lower semicontinuity of the integral $f$
on $\real\times[0,\infty)$.

To prove Property~4, let $(\overline{\mu},\overline{\lambda}) = (0,1)$, which
is clearly feasible. In addition, for $\Pi$-almost all $\theta \in \Theta$ and
any pair $(\mu,\lambda)$ in a small neighborhood around $(0,1)$, we have
\[
F({\mu},{\lambda},\theta) = \mu + \varepsilon \lambda + \lambda \phi^*((M_N(\theta)-\mu)/\lambda)
\]
By assumption, $M_N \in L_{\psi^{\#}}$ and since
$(\Theta,\mathcal{B},\Pi)$ is a probability space, the constant function
$\mu/\lambda \in  L_{\psi^{\#}}$. Hence, $(M_N(\cdot)-\mu)/\lambda \in L_{\psi^{\#}}$.
Using the fact that $\psi \le \phi$, we have $\psi^{\#} \ge \phi^*$ (as $\phi$
is only finite on $[0,\infty)$). This provides the upper bound
\[
F({\mu},{\lambda},\theta) \le \mu + \varepsilon \lambda + \lambda \psi^{\#}((M_N(\theta)-\mu)/\lambda)
\]
and therefore,
\[
f(\mu,\lambda) \le \mu + \varepsilon \lambda \mathbb E[ \psi^{\#}((M_N(\theta)-\mu)/\lambda)] < \infty.
\]

Finally, for Property~5, we first note that \eqref{eq:cgvi} holds by
Theorem~\ref{th:exist-phi} (cf.\ Appendix~\ref{s:ap_Orlicz}).  Then by strong
duality, $f$ admits at least one minimizer
$(\bar{\mu},\bar{\lambda})\in \real \times [0,\infty)$. Let $\overline{f} \eqdef
f(\bar{\mu},\bar{\lambda})$. Since $f$ is lower semicontinuous and convex on
$\real \times [0,\infty)$, the sublevel set 
\[
\left\{(\mu,\lambda) \in \mathbb R \times \mathbb R_+ \left|\; f(\mu,\lambda) \le \overline{f}\right.\right\}
\]
is closed and convex. Next, let $\left\{(\mu_k,\lambda_k)\right\}$ be a
sequence of minimizers.  Then for $\Pi$-almost all $\theta$, we have
\[
F(\mu_k,\lambda_k,\theta) \ge \mu_k + \varepsilon \lambda_k + (M_N(\theta) - \mu_k) x - \lambda_k \phi(x) \quad\forall\, x \in \real.
\]
Setting $x = 1$ yields
\[
F(\mu_k,\lambda_k,\theta) \ge  \varepsilon \lambda_k + M_N(\theta).
\]
Taking the expectation and using the fact that $\lambda_k \ge 0$, we have 
\[
0 \le \lambda_k \le \overline{f} - \mathbb E[M_N]
\]
It follows that $\left\{\lambda_k\right\}$ is bounded. Due to the assumptions
on $\phi$, we can readily argue for the existence of constants
$x_1,x_2\in\real$ with $0 < x_1 < 1< x_2$ such that $\phi(x_i)$ is finite for
$i=1,2$. Therefore, there exists a constant $c > 0$ such that
\[
(1 - x_i)\mu_k \le \overline{f} - \varepsilon \lambda_k - x_i\mathbb E[M_N] + \lambda_k \phi(x_i) < c
\]
for all $k$. Hence, $\left\{\mu_k\right\}$ is bounded as well.
Since $F$ is random lower semicontinuous and the samples are iid, then strong
the LLN holds for each fixed pair $(\mu,\lambda) \in \real \times [0,\infty)$,
completing the proof.
\end{proof}

\vskip 0.2in
\bibliography{references}

\end{document}